\newcommand{\vp}{\varphi}
\newcommand{\wtcG}{\wtilde{\cG}}
\newcommand{\whf}{\what{f}}
\newcommand{\whg}{\what{g}}
\newcommand{\whh}{\what{h}}
\newcommand{\whgm}{\what{\gamma}}
\newcommand{\whfN}{\what{\fN}}
\newcommand{\wtk}{\wtilde{k}}
\begin{document}
\title[The A-model for finite rank perturbations]{On some
extensions of the A-model}
\author{Rytis Jur\v{s}\.{e}nas}
\address{Vilnius University,
Institute of Theoretical Physics and Astronomy,
Saul\.{e}tekio ave.~3, LT-10257 Vilnius, Lithuania}
\email{Rytis.Jursenas@tfai.vu.lt}
% \thanks{}
\keywords{Finite rank higher order singular perturbation,
cascade (A) model, peak model,
Hilbert space, scale of Hilbert spaces, Pontryagin space,
ordinary boundary triple, Krein $Q$-function, Weyl function,
gamma field, symmetric operator, proper extension, resolvent}
\subjclass[2010]{47A56, %Functions whose values are linear operators (operator and matrix valued functions, etc., including analytic and meromorphic ones)
47B25, %Symmetric and selfadjoint operators (unbounded)
47B50, %Operators on spaces with an indefinite metric
35P05. %General topics in linear spectral theory}
}
\date{\today}
\begin{abstract}
The A-model for finite rank singular
perturbations of class $\fH_{-m-2}\setm\fH_{-m-1}$,
$m\in\bbN$, is considered from the perspective of
boundary relations. Assuming further that
the Hilbert spaces $(\fH_n)_{n\in\bbZ}$ admit an orthogonal
decomposition $\fH^-_n\op\fH^+_n$, with the corresponding
projections satisfying $P^\pm_{n+1}\subseteq P^\pm_n$,
nontrivial extensions in the A-model are constructed
for the symmetric restrictions in the subspaces.
\end{abstract}
\maketitle
%%%%%%%%%%%%%%%%%%%%%%%%%%%%%%%%%%%%%%%%%%%%%%%%%%%%%%%%%%%%%%
%%%%%%%%%%%%%%%%%%%%%%%%%%%%%%%%%%%%%%%%%%%%%%%%%%%%%%%%%%%%%%
%%%%%%%%%%%%%%%%%%%%%%%%%%%%%%%%%%%%%%%%%%%%%%%%%%%%%%%%%%%%%%
%%%%%%%%%%%%%%%%%%%%%%%%%%%%%%%%%%%%%%%%%%%%%%%%%%%%%%%%%%%%%%
\section{Introduction}
%%%%%%%%%%%%%%%%%%%%%%%%%%%%%%%%%%%%%%%%%%%%%%%%%%%%%%%%%%%%%%
%%%%%%%%%%%%%%%%%%%%%%%%%%%%%%%%%%%%%%%%%%%%%%%%%%%%%%%%%%%%%%
%%%%%%%%%%%%%%%%%%%%%%%%%%%%%%%%%%%%%%%%%%%%%%%%%%%%%%%%%%%%%%
%%%%%%%%%%%%%%%%%%%%%%%%%%%%%%%%%%%%%%%%%%%%%%%%%%%%%%%%%%%%%%
Consider a lower semibounded self-adjoint operator $L$ in
a Hilbert space $\fH_0$. Let $\fH_{n+1}\subseteq\fH_n$,
$n\in\bbZ$, be the
scale of Hilbert spaces associated with $L$.
Let also $\{\vp_\sigma\}$ be the family of linearly independent
functionals of class $\fH_{-m-2}\setm\fH_{-m-1}$, $m\in\bbN$,
where $\sigma$ ranges over an index set $\cS$ of dimension
$d\in\bbN$. Then,
the symmetric restriction $L_{\min}\subseteq L$ to
the domain of $f\in\fH_{m+2}$ such that
$\braket{\vp_\sigma,f}=0$, for all $\sigma$, is an essentially
self-adjoint operator in $\fH_0$. Sequentially, traditional methods,
see \eg \cite{Albeverio00,Hassi09-b}, for describing nontrivial
extensions of $L_{\min}$ (\ie perturbations of $L$)
in $\fH_0$ are insufficient. The classical examples of
higher order singular perturbations are the
point-interactions modeled by the Dirac distribution
and its derivatives.

To construct nontrivial realizations of $L_{\min}$
in Hilbert or Pontryagin spaces,
one considers instead the so-called cascade (A or B) models
\cite{Dijksma05,Dijksma04,Kurasov03,Kurasov03b,Dijksma00} and
the peak model \cite{Jursenas18,Kurasov09}. In these models
the Weyl (or Krein $Q$-) function is the sum of a Nevanlinna
function associated with $L_{\min}$
in $\fH_m$ and a generalized Nevanlinna function associated
with a certain multiplication operator in
a reproducing kernel Pontryagin space
\cite[Theorem~4.10]{Behrndt11}; more on reproducing
kernel spaces can be found in
\cite{Behrndt13,Arlinski08,Behrndt08,Derkach03b}.
Successively, singular perturbations are interpreted
by means
of the compression to the reference space $\fH_0$
of the resolvent of an appropriate extension
in the model space.

Here we study the cascade A-model for rank-$d$
higher order singular perturbations. More precisely,
for a specific choice of model parameters,
we extend the main results
obtained in \cite{Dijksma05} to the case of an arbitrary $d\in\bbN$
(see Theorem~\ref{thm:mn}).
The exposition utilizes the techniques
based on the notion of boundary triples
\cite{Derkach17,Derkach12,Derkach09,Derkach06}.
Then, by assuming
that the Hilbert space $\fH_n$ is expressed as the Hilbert sum
$\fH^-_n\op\fH^+_n$ of its subspaces $\fH^\pm_n$, we examine
nontrivial realizations that account
for the above described Hilbert space decomposition
(Theorem~\ref{thm:mn2}). We assume that the corresponding
orthogonal projections $P^\pm_n$ from $\fH_n$ onto
$\fH^\pm_n$ satisfy the inclusions $P^\pm_{n+1}\subseteq P^\pm_n$.
This further implies that the subspaces $\fH^\pm_n$
reduce the self-adjoint restriction to $\fH_{n+2}$
of $L$ (Theorem~\ref{thm:prn}). As a natural consequence
of our hypothesis is that the Weyl function associated
with the symmetric operator $L_{\min}$ in $\fH_m$ is
the sum of the Weyl functions associated with the
symmetric restrictions to $\fH^\pm_m$ of $L_{\min}$.

The projection of the model to the subspaces just described
has a natural application in quantum mechanics when, for example,
one wishes to account for the contribution to
the eigenvalues of antisymmetric (resp. symmetric) eigenfunctions.
For instance, if one takes $L$ such that $\fH_n=W^n_2\ot\bbC^4$,
where $W^n_2$ is the Sobolev space (Example~\ref{exam}),
then the projections $P^-_n$ and $P^+_n$ onto
the spaces of antisymmetric spin states,
$W^n_2\ot\bbC^1$, and onto the spaces of symmetric spin states,
$W^n_2\ot\bbC^3$, satisfy our hypothesis.
However, a concrete application of the present model
will be demonstrated elsewhere.

Another motivation for considering the A-model, as opposed to
the peak model, arises from an attempt to elude a too restrictive
condition imposed on the Gram matrix
$\cG=(\cG_{\sigma j,\sigma^\prime j^\prime})\in[\bbC^{md}]$
of the peak model; namely, $\cG$ must be diagonal in
$j\in\{1,\ldots,m\}$. Although initially contemplated as an
advantageous feature \cite{Kurasov09}, this restriction is
not satisfied for some operators $L$, for $m>1$, for
a simple reason that the eigenvectors of the
triplet adjoint of $L_{\min}$ for the Hilbert triple
$\fH_m\subseteq\fH_0\subseteq\fH_{-m}$ are not necessarily
orthogonal for distinct eigenvalues
(Example~\ref{exam:peak}).
%%%%%%%%%%%%%%%%%%%%%%%%%%%%%%%%%%%%%%%%%%%%%%%%%%%%%%%%%%%%%%
%%%%%%%%%%%%%%%%%%%%%%%%%%%%%%%%%%%%%%%%%%%%%%%%%%%%%%%%%%%%%%
%%%%%%%%%%%%%%%%%%%%%%%%%%%%%%%%%%%%%%%%%%%%%%%%%%%%%%%%%%%%%%
%%%%%%%%%%%%%%%%%%%%%%%%%%%%%%%%%%%%%%%%%%%%%%%%%%%%%%%%%%%%%%
\section{Preliminaries}
%%%%%%%%%%%%%%%%%%%%%%%%%%%%%%%%%%%%%%%%%%%%%%%%%%%%%%%%%%%%%%
%%%%%%%%%%%%%%%%%%%%%%%%%%%%%%%%%%%%%%%%%%%%%%%%%%%%%%%%%%%%%%
%%%%%%%%%%%%%%%%%%%%%%%%%%%%%%%%%%%%%%%%%%%%%%%%%%%%%%%%%%%%%%
%%%%%%%%%%%%%%%%%%%%%%%%%%%%%%%%%%%%%%%%%%%%%%%%%%%%%%%%%%%%%%
Let $A$ be a densely defined, closed, symmetric operator
in a Pontryagin space $\fH$ (see \eg \cite[Sec.~1.9]{Azizov89})
with an indefinite metric $[\cdot,\cdot]_\fH$.
Let $A^*$ be the adjoint in $\fH$ of $A$.
A triple $(\cH,\Gamma_0,\Gamma_1)$, where
$\cH=(\cH,\braket{\cdot,\cdot}_\cH)$ is a Hilbert
space and $\Gamma\co f\mapsto(\Gamma_0f,\Gamma_1f)$ is
the operator from $\dom A^*$ to $\cH^2(:=\cH\times\cH)$, is
called an ordinary boundary triple (OBT) for $A^*$ if
$\Gamma$ is surjective and the Green identity holds:
\[
[f,g]_{A^*}:=[f,A^*g]_\fH-[A^*f,g]_\fH=
\braket{\Gamma_0f,\Gamma_1g}_\cH-
\braket{\Gamma_1f,\Gamma_0g}_\cH
\]
for all $f,g\in\dom A^*$; see \eg \cite[Definition~2.1]{Derkach95}.
It is shown that
an OBT for $A^*$ in a Pontryagin space (or more generally in
a Krein space) exists iff $A$ admits a self-adjoint extension
in $\fH$ (\cf \cite[Proposition~3.4]{Behrndt11},
\cite[p.~192]{Derkach15}).

If the assumption on the density of $\dom A$ is dropped off,
that is, if $A^*$ is a linear relation
\cite{Hassi09,Hassi07}, then an OBT
$(\cH,\Gamma_0,\Gamma_1)$ for $A^*$ is defined by
considering $\Gamma_i$, $i\in\{0,1\}$, as a mapping
from $A^*$ onto $\cH$. Sequentially, the Green identity
reads
\[
[f,g^\prime]_\fH-[f^\prime,g]_\fH=
\braket{\Gamma_0\whf,\Gamma_1\whg}_\cH-
\braket{\Gamma_1\whf,\Gamma_0\whg}_\cH
\]
for $\whf=(f,f^\prime)$, $\whg=(g,g^\prime)\in A^*$.
The reader may also consult
\cite[Definition~6]{Derkach15}, as well as
\cite[Definition~2.3]{Hassi13},
\cite[Definition~7.11]{Derkach12} in the Hilbert
space case.
In what follows we frequently identify operators with
their graphs. Then the present definition of an OBT
reduces to the previous definition as long as
$A$ becomes densely defined.

A proper extension $A_\Theta$ of $A$, \ie such that
$A\subseteq A_\Theta\subseteq A^*$, is uniquely determined
by a linear relation $\Theta$ in $\cH$ via
$\Theta=\Gamma A_\Theta$ with
$A_\Theta=\{\whf\in A^*\vrt\Gamma\whf\in\Theta\}$; see \eg
\cite[Proposition~2]{Derkach15},
\cite[Proposition~2.5]{Hassi13},
\cite[Proposition~7.12]{Derkach12},
\cite[Proposition~2.1]{Derkach95}.
In particular, a distinguished self-adjoint extension
$A_0:=A^*\vrt_{\ker\Gamma_0}$ corresponds to a self-adjoint
linear relation $\Theta=\{0\}\times\cH$ (and similarly for
the transversal one, corresponding to $\Theta=\cH\times\{0\}$).
A self-adjoint linear relation in a Krein
(or Pontryagin) space may have an empty resolvent set
(see \eg \cite[Example~3.7]{Behrndt11}). However, if
there exists at least one self-adjoint extension of $A$,
say $\wtA$, whose resolvent set $\res\wtA$ is nonempty, then
there exists an OBT for $A^*$ such that $\wtA=A_0$.

Let $A$ be a closed symmetric operator as above.
Let $\fN_z(A^*):=\ker(A^*-z)$, $z\in\bbC$, denote the
eigenspace of a linear relation $A^*$ (and similarly for
other linear relations and operators). Let
$\whfN_z(A^*)$ be the set of the pairs $(f_z,zf_z)$ with
$f_z\in\fN_z(A^*)$. Let also $\pi_1$ denote the orthogonal
projection in the Hilbert sum of a Hilbert space with
itself onto the first factor. Assume that
the resolvent set $\res A_0\neq\emptyset$.
The $\gamma$-field $\gamma$
and the Weyl function $M$ corresponding to
the OBT $(\cH,\Gamma_0,\Gamma_1)$ for $A^*$ are bounded
operator valued functions defined by
\cite[Definition~7]{Derkach15},
\cite[Definition~2.6]{Hassi13}
\[
\gamma(z):=\pi_1\whgm(z)\,,\quad
\whgm(z):=(\Gamma_0\vrt_{\whfN_z(A^*)})^{-1}\,,
\quad M(z):=\Gamma_1\whgm(z)
\]
for $z\in\res A_0$.
Then the resolvent of a
closed proper extension $A_\Theta$, \ie such that
$\Theta$ is closed, is represented by the Krein--Naimark
resolvent formula (see \eg
\cite[Theorem~4]{Derkach15},
\cite[Theorem~2.1]{Derkach95})
\[
(A_\Theta-z)^{-1}=(A_0-z)^{-1}+\gamma(z)
(\Theta-M(z))^{-1}\gamma(\ol{z})^*
\]
for $z\in\res A_0\mcap\res A_\Theta$.
Moreover, $z\in\res A_\Theta$ iff
$0\in\res(\Theta-M(z))$.

Let $\fH=(\fH,[\cdot,\cdot]_\fH)$ be a Krein
(or in particular Pontryagin) space,
let $\cH=(\cH,\braket{\cdot,\cdot}_\cH)$ be a Hilbert space.
Consider a linear relation
$\Gamma\subseteq\fH^2\times\cH^2$.
Let $\Gamma^{[+]}$ be its Krein space adjoint:
\begin{align*}
\Gamma^{[+]}:=&\{((h_\circ,h^\prime_\circ),(g,g^\prime))
\in\cH^2\times\fH^2\vrt
(\forall ((f,f^\prime),(h,h^\prime))\in\Gamma)
\\
&[f,g^\prime]_\fH-[f^\prime,g]_\fH=
\braket{h,h^\prime_\circ}_\cH-
\braket{h^\prime,h_\circ}_\cH\}\,.
\end{align*}
Then $\Gamma$ is said to be an isometric (resp. unitary)
linear relation if the inverse linear relation
$\Gamma^{-1}\subseteq\Gamma^{[+]}$
(resp. $\Gamma^{-1}=\Gamma^{[+]}$). If $\Gamma$ is unitary
and additionally single-valued (\ie an operator
identified with its graph), then
by \cite[Corollary~2.4(i)]{Derkach06} $\ol{\ran}\Gamma=\cH^2$
(the closure of the range). If, moreover,
$\dom\Gamma$ is closed, then also $\ran\Gamma$ is closed,
and is given by $\ran\Gamma=\cH^2$
(\cite[Corollary~2.4(iii)]{Derkach06}).

Throughout we use quite standard notation
for the domain $\dom A$, the range $\ran A$,
the kernel $\ker A$, and the multivalued part
$\mul A$ of a linear relation $A$. The resolvent
set of $A$ is denoted by $\res A$, the point
spectrum by $\sigma_p(A)$.
%%%%%%%%%%%%%%%%%%%%%%%%%%%%%%%%%%%%%%%%%%%%%%%%%%%%%%%%%%%%%%
%%%%%%%%%%%%%%%%%%%%%%%%%%%%%%%%%%%%%%%%%%%%%%%%%%%%%%%%%%%%%%
%%%%%%%%%%%%%%%%%%%%%%%%%%%%%%%%%%%%%%%%%%%%%%%%%%%%%%%%%%%%%%
%%%%%%%%%%%%%%%%%%%%%%%%%%%%%%%%%%%%%%%%%%%%%%%%%%%%%%%%%%%%%%
\section{The A-model for finite rank perturbations}
%%%%%%%%%%%%%%%%%%%%%%%%%%%%%%%%%%%%%%%%%%%%%%%%%%%%%%%%%%%%%%
%%%%%%%%%%%%%%%%%%%%%%%%%%%%%%%%%%%%%%%%%%%%%%%%%%%%%%%%%%%%%%
%%%%%%%%%%%%%%%%%%%%%%%%%%%%%%%%%%%%%%%%%%%%%%%%%%%%%%%%%%%%%%
%%%%%%%%%%%%%%%%%%%%%%%%%%%%%%%%%%%%%%%%%%%%%%%%%%%%%%%%%%%%%%
Let $\fH_{n+1}\subseteq\fH_n$, $n\in\bbZ$, be the scale
of Hilbert spaces associated with a lower semibounded
self-adjoint operator $L$ defined
in the reference Hilbert space $\fH_0$ with domain
$\dom L=\fH_2$.
The scalar product in $\fH_n$ is defined via the
scalar product $\braket{\cdot,\cdot}_0$ in $\fH_0$
by scaling according to
\[
\braket{\cdot,\cdot}_n:=
\braket{b_n(L)^{1/2}\cdot,b_n(L)^{1/2}\cdot}_0\,,\quad
b_n(L):=(L-z_1)^n\,.
\]
The number $z_1\in\res L\mcap\bbR$ is fixed and
referred to as the model parameter. Let us mention that
the above definition of the $\fH_n$-scalar product
allows us to avoid extra technicalities arising
when, for example, one chooses $b_n(L)$ as the product
of $(L-z_j)$ for $j\in\{1,\ldots,n\}$ for not
necessarily identical model parameters $z_j$, as is done
in \cite{Dijksma05} (where $z_j=-a_j$), or when, on top
of that, one assumes $L$ not necessarily semibounded, in which
case one should put $\abs{L}$ in $b_n(L)$ instead of $L$.
On the other hand, our definition of the scalar product
predefines the inner structure of the model space
(to be defined later); namely, it is shown in
\cite[Theorem~3.2(iii)]{Dijksma05} for $d=1$
that the present choice of the model parameters
(\ie $a_j=-z_1$ for all $j$) leads to an indefinite
inner product space, as the model space.
Let us moreover advertise that the current definition
of the unitary operator $b_n(L)^{1/2}$ (from $\fH_n$
to $\fH_0$) is not allowed in the peak model
\cite{Kurasov09}, which is a purely Hilbert space
model (\cf \cite[Theorem~3.2(ii)]{Dijksma05}).

To $L=L_0$ one associates an operator $L_n:=L\vrt_{\fH_{n+2}}$
in $\fH_n$. Then $L_n$ is self-adjoint in $\fH_n$, and moreover
$L_{n+1}\subset L_n$ and $\res L_n=\res L$
(\cf Section~\ref{sec:Lnpm}).
For notational simplicity
we drop-off the subscript when no confusion can arise.

Let us fix $m\in\bbN$.
Let $L_{\max}$ denote the triplet adjoint of $L_{\min}$
for the Hilbert triple $\fH_m\subset\fH_0\subset\fH_{-m}$;
see also \cite[Theorem~2.1]{Dijksma05},
\cite[Definition~3.1]{Kurasov09},
\cite[Proposition~4.2]{Jursenas18}.
The operator $L_{\max}$ extends $L_{-m+2}$ to
\[
\dom(L_{\max})=\fH_{-m+2}\dsum\fN_z(L_{\max})\,,
\quad z\in\res L
\]
(direct sum). $\fN_z(L_{\max})$
is the linear span of the singular elements
$\{g_\sigma(z)\in\fH_{-m}\setm\fH_{-m+1}\}$,
each being defined so that
$b_m(L)^{-1}g_\sigma(z)\in\fH_m\setm\fH_{m+1}$
is a deficiency element of the adjoint $L^*_{\min}$ in $\fH_m$
of a densely defined, closed, symmetric operator
$L_{\min}$ in $\fH_m$ with defect numbers $(d,d)$.
Let us recall that the domain of $L_{\min}$ is
parametrized via the family of linearly independent
functionals $\{\vp_\sigma\in\fH_{-m-2}\setm\fH_{-m-1}\}$
according to $\braket{\vp_\sigma,f}=0$ for
$f\in\fH_{m+2}$; the duality pairing $\braket{\cdot,\cdot}$
between $\fH_{-m-2}$ and $\fH_{m+2}$ is defined
via the $\fH_0$-scalar product in a usual way
(\cf \cite[Eq.~(1.17)]{Albeverio00}). In the sequel
we also use the vector notation
$\braket{\vp,\cdot}=(\braket{\vp_\sigma,\cdot})
\co\fH_{m+2}\lto\bbC^d$, and similarly for
other duality pairings.
In terms of
the functionals $\{\vp_\sigma\}$ the eigenvectors
of $L_{\max}$ are then given (in the generalized sense)
by $g_\sigma(z):=(L-z)^{-1}\vp_\sigma$.

As the space $\fH_{-m}$ in which $L_{\max}$ acts is too
large, following the lines of \cite{Dijksma05}
one further considers $L_{\max}$ in a finite-dimensional
extension of $\fH_m$, referred to as an intermediate
(or model) space. We now discuss the construction of
the space in more detail.

Consider an $md$-dimensional linear space
\[
\fK_{\mrm{A}}:=\spn\{h_\alpha\vrt
\alpha=(\sigma,j)\in\cS\times J\}\,,
\quad
J:=\{1,2,\ldots,m\}
\]
($\cS$ is an index set of dimension $d$)
spanned by the elements
\[
h_{\sigma j}:=(L-z_1)^{-j}\vp_\sigma\in
\fH_{-m-2+2j}\setm\fH_{-m-1+2j}\,.
\]
Note that $h_{\sigma 1}=g_\sigma(z_1)\in\fN_{z_1}(L_{\max})$.
An element $k\in\fK_{\mrm{A}}\subseteq\fH_{-m}$ is thus of the form
\[
k=\sum_\alpha d_\alpha(k)
h_\alpha\,,\quad d_\alpha(k)\in\bbC\,.
\]
Since the system $\{h_\alpha\}$ is linearly independent,
the Gram matrix
\[
\wtcG_{\mrm{A}}=([\wtcG_{\mrm{A}}]_{\alpha\alpha^\prime})
\in[\bbC^{md}]\,,\quad
[\wtcG_{\mrm{A}}]_{\alpha\alpha^\prime}:=
\braket{h_\alpha,h_{\alpha^\prime}}_{-m}
\]
is positive definite, and one establishes a bijective correspondence
$\fK_{\mrm{A}}\ni k\leftrightarrow d(k)=(d_\alpha(k))\in\bbC^{md}$.
Observe that $\fK_{\mrm{A}}\mcap\fH_{m-1}=\{0\}$.

Define a linear space
\[
\cH_{\mrm{A}}:=(\fH_m\dsum\fK_{\mrm{A}},[\cdot,\cdot]_{\mrm{A}})
\]
with an indefinite metric
\[
[f+k,f^\prime+k^\prime]_{\mrm{A}}:=
\braket{f,f^\prime}_m+
\braket{d(k),\cG_{\mrm{A}}d(k^\prime)}_{\bbC^{md}}
\]
for $f,f^\prime\in\fH_m$; $k,k^\prime\in\fK_{\mrm{A}}$.
An Hermitian matrix
$\cG_{\mrm{A}}=([\cG_{\mrm{A}}]_{\alpha\alpha^\prime})
\in[\bbC^{md}]$ is referred to as the
Gram matrix of the A-model.
The model space $\cH_{\mrm{A}}$ is a Hilbert space
if $\cG_{\mrm{A}}\geq0$ and a Pontryagin space otherwise.
Let also
\[
\cH^\prime_{\mrm{A}}:=(\fH_m\op\bbC^{md},
[\cdot,\cdot]^\prime_{\mrm{A}})
\]
with an indefinite metric
\[
[(f,\xi),(f^\prime,\xi^\prime)]^\prime_{\mrm{A}}:=
\braket{f,f^\prime}_m+
\braket{\xi,\cG_{\mrm{A}}\xi^\prime}_{\bbC^{md}}
\]
for $(f,\xi)$, $(f^\prime,\xi^\prime)\in\fH_m\op\bbC^{md}$.
The isometric isomorphism (unitary operator) from
$\cH_{\mrm{A}}$ onto $\cH^\prime_{\mrm{A}}$,
realized via the above established
bijective correspondence $\fK_{\mrm{A}}\leftrightarrow\bbC^{md}$,
is denoted by $U_{\mrm{A}}$.

The construction of nontrivial extensions to
$\cH_{\mrm{A}}$ of $L_{\min}$ relies upon the following lemma;
\cf \cite[Eq.~(2.3)]{Dijksma05}.
\begin{lem}\label{lem:1}
The restriction to $\cH_{\mrm{A}}$
of $L_{\max}$ is the operator $A_{\max}$ given by
\begin{align*}
\dom A_{\max}=&
\{f^\#+h_{m+1}(c)+k\vrt f^\#\in\fH_{m+2}\,;\,
k\in\fK_{\mrm{A}}\,;
\\
&h_{m+1}(c):=\sum_\sigma c_\sigma h_{\sigma,m+1}\,;\,
c=(c_\sigma)\in\bbC^d\,;
\\
&h_{\sigma,m+1}:=b_{m+1}(L)^{-1}\vp_\sigma\in
\fH_{m}\setm\fH_{m+1}\}\,,
\\
A_{\max}(f^\#+h_{m+1}(c)+k)=&
Lf^\#+z_1h_{m+1}(c)+\wtk\,,
\quad \wtk\in\fK_{\mrm{A}}\,,
\\
d(\wtk):=&\fM_dd(k)+\eta(c)\,,\quad
\eta(c):=(\delta_{jm}c_\sigma)\in\bbC^{md}
\end{align*}
where the matrix $\fM_d:=\fM\op\cdots\op \fM$
($d$ times)
is the matrix direct sum of $d$ matrices
$\fM=(\fM_{jj^\prime})\in[\bbC^m]$ defined by
\[
	\fM_{jj^\prime}:=\delta_{jj^\prime}z_1+
	\delta_{j+1,j^\prime}\,,\quad j\in J\setm\{m\}\,,
	\quad j^\prime\in J
\]
and $\fM_{mj^\prime}:=\delta_{j^\prime m}z_1$, $j^\prime\in J$.
For $m=1$ one puts $\fM:=z_1$.
\end{lem}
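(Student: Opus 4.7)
The plan is to prove both the set description of $\dom A_{\max}$ and the formula for $A_{\max}$ in parallel, by first computing $L_{\max}$ on the ``building blocks'' of the asserted decomposition and then assembling. The only nontrivial tool is the identity
\[
L(L-z_1)^{-1}=I+z_1(L-z_1)^{-1}
\]
on each scale $\fH_n$, together with the description $\dom L_{\max}=\fH_{-m+2}\dsum\fN_{z_1}(L_{\max})$ recalled earlier in the section.

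First I would compute $L_{\max}h_{\sigma j}$ case-by-case. For $j\geq 2$, the inclusion $h_{\sigma j}\in\fH_{-m-2+2j}\subseteq\fH_{-m+2}$ means $L_{\max}$ acts on $h_{\sigma j}$ as the scale-extended $L$, and the identity above gives $L_{\max}h_{\sigma j}=z_1 h_{\sigma j}+h_{\sigma,j-1}\in\fK_{\mrm{A}}$. For $j=1$, $h_{\sigma 1}=g_\sigma(z_1)\in\fN_{z_1}(L_{\max})$, so $L_{\max}h_{\sigma 1}=z_1 h_{\sigma 1}$. For $j=m+1$, $h_{\sigma,m+1}\in\fH_m\subseteq\fH_{-m+2}$ gives $L_{\max}h_{\sigma,m+1}=z_1 h_{\sigma,m+1}+h_{\sigma m}$. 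Reading the first two cases in $\bbC^{md}$-coordinates produces exactly the matrix $\fM_d$ acting on $d(k)$ (diagonal $z_1$'s from $z_1 h_{\sigma j}$, superdiagonal $1$'s from the shift $h_{\sigma,j-1}$, last-row truncation from the $j=1$ eigenvector case), while the third case contributes the additional shift $\eta(c)$.

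These computations yield the forward inclusion: $u=f^\#+h_{m+1}(c)+k$ is in $\cH_{\mrm{A}}\mcap\dom L_{\max}$ (using $\fH_m\subseteq\fH_{-m+2}$ for $m\geq 1$ and the splitting of $\fK_{\mrm{A}}$ between its $j=1$ and $j\geq 2$ parts), and they assemble to $A_{\max}u=Lf^\#+z_1 h_{m+1}(c)+\wtk$ with $d(\wtk)=\fM_d d(k)+\eta(c)$. For the reverse inclusion, let $u=f+k\in\cH_{\mrm{A}}$ ($f\in\fH_m$, $k\in\fK_{\mrm{A}}$) with $L_{\max}u\in\cH_{\mrm{A}}$. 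Since $L_{\max}k\in\fK_{\mrm{A}}$ by the first step, this forces $Lf\in\fH_m+\fK_{\mrm{A}}$, and $Lf\in\fH_{m-2}$ together with $\fH_m\subseteq\fH_{m-2}$ gives $Lf\in\fH_m+(\fK_{\mrm{A}}\mcap\fH_{m-2})$. The identification
\[
\fK_{\mrm{A}}\mcap\fH_{m-2}=\spn\{h_{\sigma m}\vrt\sigma\in\cS\},
\]
extracted from the strict scale-memberships $h_{\sigma j}\in\fH_{-m-2+2j}\setm\fH_{-m-1+2j}$, then gives $Lf=g+\sum_\sigma c_\sigma h_{\sigma m}$ for some $g\in\fH_m$ and $c\in\bbC^d$; applying $(L-z_1)^{-1}$ to the rearranged equation $(L-z_1)f=(g-z_1 f)+\sum_\sigma c_\sigma h_{\sigma m}$ yields $f=f^\#+h_{m+1}(c)$ with $f^\#:=(L-z_1)^{-1}(g-z_1 f)\in\fH_{m+2}$. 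Uniqueness of the representation, needed for $A_{\max}$ to be well-defined on the set, follows from $\fK_{\mrm{A}}\mcap\fH_m=\{0\}$ together with $h_{\sigma,m+1}\in\fH_m\setm\fH_{m+1}$.

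The principal obstacle is the identification of $\fK_{\mrm{A}}\mcap\fH_{m-2}$: it is the one step requiring genuine use of the strict scale memberships of the basis (going beyond ordinary linear independence), and it is precisely this step that accounts for the appearance of $\spn\{h_{\sigma,m+1}\}$ in $\dom A_{\max}$ beyond $\fH_{m+2}\dsum\fK_{\mrm{A}}$. Everything else is careful bookkeeping of scale indices.
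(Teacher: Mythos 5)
Your proof is correct and follows essentially the same route as the paper: compute $L_{\max}$ on $\fH_m\dsum\fK_{\mrm{A}}$ term by term via $L(L-z_1)^{-1}=I+z_1(L-z_1)^{-1}$, then use the range restriction together with $Lf\in\fH_{m-2}$ to force the extra summand $h_{m+1}(c)$; your handling of the reverse inclusion (identifying $\fK_{\mrm{A}}\mcap\fH_{m-2}$ and inverting $L-z_1$) is if anything more explicit than the paper's. One cosmetic slip: the absence of a superdiagonal entry in the last row of $\fM$ comes from $k$ having no $(m+1)$-component, not from the $j=1$ eigenvector relation $L_{\max}h_{\sigma 1}=z_1h_{\sigma 1}$ (the latter is what prevents a $\vp_\sigma$-term from escaping $\fH_{-m}$ at the bottom of the chain); your actual case-by-case formulas are nevertheless correct.
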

\begin{proof}
By definition, the action of $L_{\max}$ on
$f+k\in\fH_m\dsum\fK_{\mrm{A}}$ is given
(in the generalized sense) by
\begin{align*}
L_{\max}(f+k)=&Lf+\sum_\sigma z_1d_{\sigma 1}(k)h_{\sigma 1}
+\sum_\sigma\sum_{j=2}^md_{\sigma j}(k)
L(L-z_1)^{-j}\vp_\sigma
\\
=&Lf+z_1k+\sum_\sigma\sum_{j=1}^{m-1}d_{\sigma,j+1}(k)
h_{\sigma j}\,.
\end{align*}
Now $Lf\in\fH_{m-2}$, thus the range restriction
$L_{\max}(f+k)\in\fH_m\dsum\fK_{\mrm{A}}$ implies that
$f$ is of the form $f^\#+g$ for some $f^\#\in\fH_{m+2}$
and $g\in\fH_m$ such that $Lg\in\cH_{\mrm{A}}$. By noting
that $Lh_{m+1}(c)=z_1h_{m+1}(c)+h_m(c)$
($h_m(c)\in\fK_{\mrm{A}}$ is defined similar to
$h_{m+1}(c)$) for an arbitrary $c\in\bbC^d$, one concludes that
$g=h_{m+1}(c)$, and the required result follows.
\end{proof}
Now we state the main realization theorem in the A-model.
\begin{thm}\label{thm:mn}
Assume that an invertible Hermitian matrix $\cG_{\mrm{A}}$
satisfies the commutation relation
\begin{equation}
\cG_{\mrm{A}}\fM_d=\fM^*_d\cG_{\mrm{A}}\,.
\label{eq:hypho2}
\end{equation}
Then the triple $(\bbC^d,\Gamma^{\mrm{A}}_0,\Gamma^{\mrm{A}}_1)$,
where
$\Gamma^{\mrm{A}}\co f\lmap(\Gamma^{\mrm{A}}_0f,\Gamma^{\mrm{A}}_1f)$
from $\dom A_{\max}$ to $\bbC^d\times\bbC^d$ is
defined by
\begin{align*}
\Gamma^{\mrm{A}}_0(f^\#+h_{m+1}(c)+k):=&c\,,
\\
\Gamma^{\mrm{A}}_1(f^\#+h_{m+1}(c)+k):=&
\braket{\vp,f^\#}-[\cG_{\mrm{A}}d(k)]_m
\end{align*}
with
\[
[\cG_{\mrm{A}}d(k)]_m:=([\cG_{\mrm{A}}d(k)]_{\sigma m})
\in\bbC^d
\]
and $f^\#\in\fH_{m+2}$, $k\in\fK_{\mrm{A}}$, $c\in\bbC^d$,
is an OBT for the adjoint
$A^*_{\min}=A_{\max}$ of a densely defined, closed,
symmetric operator
$A_{\min}=A_{\max}\vrt_{\ker \Gamma^{\mrm{A}}}$ in $\cH_{\mrm{A}}$.

Moreover,
for a (closed) linear relation $\Theta$ in $\bbC^d$,
a proper extension $A_\Theta$ of $A_{\min}$ is the restriction
of $A_{\max}$ to the set of $f\in\dom A_{\max}$ such that
$\Gamma^{\mrm{A}}f\in\Theta$. The Krein--Naimark resolvent
formula reads
\[
(A_\Theta-z)^{-1}=
(A_0-z)^{-1}+\gamma_{\mrm{A}}(z)
(\Theta-M_{\mrm{A}}(z))^{-1}\gamma_{\mrm{A}}(\ol{z})^*
\]
for $z\in\res A_0\mcap \res A_\Theta$. The
resolvent of a distinguished self-adjoint extension
$A_0:=A_{\{0\}\times\bbC^d}$ is given by
\[
(A_0-z)^{-1}=U^*_{\mrm{A}}[(L-z)^{-1}\op
(\fM_d-z)^{-1}]U_{\mrm{A}}
\]
for $z\in\res A_0=\res L\setm\{z_1\}$.
The $\gamma$-field $\gamma_{\mrm{A}}$ and the Weyl function
$M_{\mrm{A}}$ corresponding to
$(\bbC^d,\Gamma^{\mrm{A}}_0,\Gamma^{\mrm{A}}_1)$
are given by
\[
\gamma_{\mrm{A}}(z)\bbC^d=\fN_z(A_{\max})
=\{\sum_\sigma c_\sigma F_\sigma(z)\vrt
c_\sigma\in\bbC\}\,,\quad
F_\sigma(z):=\frac{g_\sigma(z)}{(z-z_1)^m}
\]
and
\[
M_{\mrm{A}}(z)=q(z)+r(z)\quad\text{on}\quad\bbC^d
\]
for $z\in\res A_0$. The Krein $Q$-function $q$
of $L_{\min}$ is defined by
\[
q(z)=([q(z)]_{\sigma\sigma^\prime})\in[\bbC^d]\,,\quad
[q(z)]_{\sigma\sigma^\prime}:=
(z-z_1)\braket{\vp_\sigma,(L-z)^{-1}
h_{\sigma^\prime,m+1}}
\]
for $z\in\res L$,
and the generalized Nevanlinna function $r$ is defined by
\[
r(z)=([r(z)]_{\sigma\sigma^\prime})\in[\bbC^d]\,,\quad
[r(z)]_{\sigma\sigma^\prime}:=-\sum_j
\frac{[\cG_{\mrm{A}}]_{\sigma m,\sigma^\prime j}}{
(z-z_1)^{m-j+1}}
\]
for $z\in\bbC\setm\{z_1\}$.
\end{thm}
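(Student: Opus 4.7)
My plan is to verify the OBT axioms for $(\bbC^d,\Gamma^{\mrm{A}}_0,\Gamma^{\mrm{A}}_1)$ directly on $\dom A_{\max}$, and then read off $A_0$, $\gamma_{\mrm{A}}$, $M_{\mrm{A}}$ from their abstract definitions; once the OBT structure is established, the Krein--Naimark resolvent formula and the $\Theta$-parametrization of closed proper extensions are automatic from the general theory in the Preliminaries. The substantive work therefore reduces to (i)~the Green identity, (ii)~surjectivity of $\Gamma^{\mrm{A}}$ together with $A^*_{\min}=A_{\max}$, and (iii)~the explicit computation of $A_0$, $\gamma_{\mrm{A}}$, $M_{\mrm{A}}$.

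The Green identity is the main obstacle. Writing $f=f^\#+h_{m+1}(c)+k$ and $g=g^\#+h_{m+1}(c')+k'$ in $\dom A_{\max}$, Lemma~\ref{lem:1} lets me split $[f,A_{\max}g]_{\mrm{A}}-[A_{\max}f,g]_{\mrm{A}}$ along $\cH_{\mrm{A}}=\fH_m\dsum\fK_{\mrm{A}}$. In the $\fH_m$-summand the $(f^\#,g^\#)$-piece vanishes by self-adjointness of $L\vrt_{\fH_{m+2}}$ in $\fH_m$, the $(h_{m+1}(c),h_{m+1}(c'))$-piece cancels because $z_1\in\bbR$, and the cross-pieces $\braket{h_{m+1}(c),(L-z_1)g^\#}_m-\braket{(L-z_1)f^\#,h_{m+1}(c')}_m$ reduce, upon unfolding the scaling $\braket{\cdot,\cdot}_m=\braket{(L-z_1)^{m/2}\cdot,(L-z_1)^{m/2}\cdot}_0$ and recognizing $(L-z_1)^{(m+2)/2}f^\#$ paired with $(L-z_1)^{-(m+2)/2}\vp_\sigma$ as the $\fH_{-m-2}$--$\fH_{m+2}$ duality pairing, to $\braket{c,\braket{\vp,g^\#}}_{\bbC^d}-\braket{\braket{\vp,f^\#},c'}_{\bbC^d}$. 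In the $\fK_{\mrm{A}}$-summand, substituting $d(\tilde k)=\fM_d d(k)+\eta(c)$ produces a main term $\braket{d(k),(\cG_{\mrm{A}}\fM_d-\fM^*_d\cG_{\mrm{A}})d(k')}_{\bbC^{md}}$, which vanishes by hypothesis~\eqref{eq:hypho2}, plus two $\eta$-remainders that collapse (using Hermiticity of $\cG_{\mrm{A}}$) to $\braket{[\cG_{\mrm{A}} d(k)]_m,c'}_{\bbC^d}-\braket{c,[\cG_{\mrm{A}} d(k')]_m}_{\bbC^d}$. Summing the two summands assembles precisely $\braket{c,\Gamma^{\mrm{A}}_1 g}_{\bbC^d}-\braket{\Gamma^{\mrm{A}}_1 f,c'}_{\bbC^d}$. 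The delicate part is to keep the scaling and conjugation conventions aligned so that the $\vp$-duality contributions from the $\fH_m$-part and the $\cG_{\mrm{A}}$-contributions from the $\fK_{\mrm{A}}$-part combine into the two components of $\Gamma^{\mrm{A}}_1$.

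Surjectivity of $\Gamma^{\mrm{A}}$ is immediate: given $(a,b)\in\bbC^{2d}$, take $c=a$, $f^\#=0$, and pick $k\in\fK_{\mrm{A}}$ with $[\cG_{\mrm{A}} d(k)]_m=-b$, which is possible because $\cG_{\mrm{A}}$ is invertible. Symmetry of $A_{\min}$ follows from Green on $\ker\Gamma^{\mrm{A}}$; density of $\dom A_{\min}$ in $\cH_{\mrm{A}}$ uses that the affine set $\{f^\#\in\fH_{m+2}\vrt\braket{\vp,f^\#}=\beta\}$ is dense in $\fH_m$ for every $\beta\in\bbC^d$ (it is $f^\#_\beta+\dom L_{\min}$ and $\dom L_{\min}$ is dense); closedness is inherited from $L$ together with finite-dimensionality of $\fK_{\mrm{A}}$; the standard OBT argument then gives $A^*_{\min}=A_{\max}$. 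The distinguished extension $A_0=A_{\max}\vrt_{\ker\Gamma^{\mrm{A}}_0}$ is read off at $c=0$ in Lemma~\ref{lem:1} and becomes $U^*_{\mrm{A}}(L\op\fM_d)U_{\mrm{A}}$; since $\fM$ is upper triangular with $z_1$ on the diagonal, $\sigma(\fM_d)=\{z_1\}$, so $\res A_0=\res L\setm\{z_1\}$ and the resolvent has the claimed product form. For $\gamma_{\mrm{A}}$, iterating the resolvent identity gives
\[
g_\sigma(z)=\sum_{j=1}^{m}(z-z_1)^{j-1}h_{\sigma,j}+(z-z_1)^{m}h_{\sigma,m+1}+(z-z_1)^{m+1}(L-z)^{-1}h_{\sigma,m+1},
\]
so $F_\sigma(z)=g_\sigma(z)/(z-z_1)^m$ sits in $\dom A_{\max}$ with $f^\#=(z-z_1)(L-z)^{-1}h_{\sigma,m+1}$, $c=e_\sigma$, and $k=k_\sigma$ where $d_{\sigma' j}(k_\sigma)=\delta_{\sigma'\sigma}(z-z_1)^{-(m-j+1)}$; this reads off $\Gamma^{\mrm{A}}_0 F_\sigma(z)=e_\sigma$, and $(A_{\max}-z)F_\sigma(z)=0$ is verified from Lemma~\ref{lem:1} using $L(L-z)^{-1}=1+z(L-z)^{-1}$ together with the telescoping action of $\fM_d$ on $d(k_\sigma)$, namely $\fM_d d(k_\sigma)+\eta(e_\sigma)=z\,d(k_\sigma)$. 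Finally, $\Gamma^{\mrm{A}}_1$ applied to the same decomposition yields the $q(z)$-column from $\braket{\vp,f^\#}$ and the $r(z)$-column from $-[\cG_{\mrm{A}} d(k_\sigma)]_m$, giving $M_{\mrm{A}}(z)=q(z)+r(z)$.
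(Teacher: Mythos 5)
Your proof is correct and follows essentially the same route as the paper: the same splitting of the boundary form along $\fH_m\dsum\fK_{\mrm{A}}$, with \eqref{eq:hypho2} killing the $\fK_{\mrm{A}}$-diagonal term and the $\eta$-remainders and $\vp$-cross-terms assembling $\Gamma^{\mrm{A}}_1$, and the same computation of the defect elements from the iterated resolvent identity, whence $M_{\mrm{A}}=q+r$. The one place where you genuinely diverge is the identity $A^*_{\min}=A_{\max}$: the paper establishes it by explicitly computing the adjoint $A^*_{\max}$ in $\cH_{\mrm{A}}$ (finding $\dom A^*_{\max}=\ker\Gamma^{\mrm{A}}$) and then verifying $A^*_{\min}=A_{\max}$ directly, whereas you invoke ``the standard OBT argument'' from the Green identity, surjectivity, and closedness/density/symmetry of $A_{\min}$. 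As stated that is not quite enough---the Green identity plus surjectivity only give $A_{\max}\subseteq A^*_{\min}$---and the standard criterion you want additionally requires that $A_0=A_{\max}\vrt_{\ker\Gamma^{\mrm{A}}_0}$ be self-adjoint in the indefinite metric of $\cH_{\mrm{A}}$ with nonempty resolvent set. This does hold: it is precisely where \eqref{eq:hypho2} makes $\fM_d$ self-adjoint with respect to $\braket{\cdot,\cG_{\mrm{A}}\cdot}_{\bbC^{md}}$, and you compute $\res A_0=\res L\setm\{z_1\}$ anyway; but that input should be stated before, not after, the step that uses it. With that reordering your argument closes, and it trades the paper's explicit adjoint computations for an appeal to the abstract boundary-triple criterion.
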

\begin{proof}
By Lemma~\ref{lem:1},
the boundary form of $A_{\max}$
is given by
\[
[f,g]_{A_{\max}}=
\braket{d(k),(\cG_\fM-\cG^*_\fM)d(k^\prime)}_{\bbC^{md}}
+\braket{\Gamma^{\mrm{A}}_0f,\Gamma^{\mrm{A}}_1g}_{\bbC^d}
-\braket{\Gamma^{\mrm{A}}_1f,\Gamma^{\mrm{A}}_0g}_{\bbC^d}
\]
with $\cG_\fM:=\cG_{\mrm{A}}\fM_d$, where
$f=f^\#+h_{m+1}(c)+k\in\dom A_{\max}$;
$g=g^\#+h_{m+1}(c^\prime)+k^\prime\in\dom A_{\max}$;
$f^\#,g^\#\in\fH_{m+2}$; $c,c^\prime\in\bbC^d$;
$k,k^\prime\in\fK_{\mrm{A}}$.
Assuming that
\[
\ker\cG_{\mrm{A}}=\{0\}\quad\text{and}\quad
\fM^*_d\cG_{\mrm{A}}\bbC^{md}\subseteq\ran\cG_{\mrm{A}}
\]
the adjoint $A_{\min}:=A^*_{\max}$
in $\cH_{\mrm{A}}$ is given by
\begin{align*}
\dom A_{\min}=&\ker\Gamma^{\mrm{A}}\,,
\\
A_{\min}(f^\#+k)=&
Lf^\#+\sum_\alpha
[\cG^{-1}_{\mrm{A}}\fM^*_d\cG_{\mrm{A}}d(k)]_\alpha
h_\alpha
\end{align*}
and hence the boundary form of $A_{\min}$ reads
\[
[f,g]_{A_{\min}}=
\braket{d(k),(\cG^*_\fM-\cG_\fM)d(k^\prime)}_{\bbC^{md}}
\]
with $f=f^\#+k\in\dom A_{\min}$ and
$g=g^\#+k^\prime\in\dom A_{\min}$ as above.
One verifies that the adjoint $A^*_{\min}=A_{\max}$,
and hence $A_{\max}$ is closed in $\cH_{\mrm{A}}$.

If \eqref{eq:hypho2} holds,
the boundary form of $A^*_{\min}$
satisfies an abstract Green identity. Thus,
since $\Gamma^{\mrm{A}}$ is single-valued and surjective,
the triple $(\bbC^d,\Gamma^{\mrm{A}}_0,\Gamma^{\mrm{A}}_1)$
is an OBT for $A^*_{\min}$.

The eigenvalue equation for $A_{\max}$ yields
\begin{equation}
f^\#=(z-z_1)(L-z)^{-1}h_{m+1}(c)\,,\quad
d(k)=-(\fM_d-z)^{-1}\eta(c)
\label{eq:x}
\end{equation}
for $f^\#+h_{m+1}(c)+k\in\dom A_{\max}$ as above.
Now
\[
[(\fM_d-z)^{-1}\eta(c)]_{\sigma j}=\sum_{\sigma^\prime}
[(\fM_d-z)^{-1}]_{\sigma j,\sigma^\prime m}c_{\sigma^\prime}
\]
with $c=(c_\sigma)\in\bbC^d$ and with
\[
[(\fM_d-z)^{-1}]_{\sigma j,\sigma^\prime m}=
\delta_{\sigma\sigma^\prime}
[(\fM-z)^{-1}]_{jm}\,,\quad
[(\fM-z)^{-1}]_{jm}=\frac{-1}{(z-z_1)^{m-j+1}}\,.
\]
Thus, by noting that
\[
(L-z)^{-1}(L-z_1)^{-m}+\sum_j
(L-z_1)^{-j}(z-z_1)^{-m+j-1}=(L-z)^{-1}(z-z_1)^{-m}
\]
one concludes that the eigenvector
$f^\#+h_{m+1}(c)+k\in\fN_z(A_{\max})$
is given as stated in the theorem.

Finally, the Weyl function
\[
M_{\mrm{A}}(z)c=\braket{\vp,f^\#}-[\cG_{\mrm{A}}d(k)]_m
\]
for $f^\#$ and $k$ as in \eqref{eq:x}.
The first term on the right-hand side
defines $q(z)c$ and the second term defines $r(z)c$.
\end{proof}
Let us mention that the $Q$-function $q$ is actually
the Weyl function associated with
a certain boundary triple for the adjoint $L^*_{\min}$
in $\fH_m$; see Corollary~\ref{cor:mn2x} below.
While $q$ is a Nevanlinna function,
$r$ is a generalized Nevanlinna function, and the Nevanlinna
class \cite{Behrndt13,Arlinski08} depends on the particular
choice of the Gram matrix $\cG_{\mrm{A}}$.

The matrix $\cG_\fM:=\cG_{\mrm{A}}\fM_d$ is Hermitian iff
\begin{equation}
[\cG_{\mrm{A}}]_{\sigma j,\sigma^\prime j^\prime}=0\,,\quad
[\cG_{\mrm{A}}]_{\sigma j,\sigma^\prime m}=
\ol{[\cG_{\mrm{A}}]_{\sigma^\prime m,\sigma j}}=
[\cG_{\mrm{A}}]_{\sigma,j+1;\sigma^\prime, m-1}\,,
\quad j,j^\prime\in J\setm\{m\}
\label{eq:GAcomm}
\end{equation}
for $m\geq2$.
For $m=1$, however, the matrix
$\cG_\fM=z_1\cG_{\mrm{A}}$ is automatically Hermitian.

Due to \eqref{eq:GAcomm}, several remarks are in order.
First one verifies that $r$ is symmetric with respect to
the real axis, that is,
$r(z)^*=r(\ol{z})$,
because $[\cG_{\mrm{A}}]_{\sigma m,\sigma^\prime j}=
[\cG_{\mrm{A}}]_{\sigma j,\sigma^\prime m}$
($j\in J$) by \eqref{eq:GAcomm}.
Note that $q(z)^*=q(\ol{z})$
is clear from the definition.
Next, one observes that the Gram matrix $\wtcG_{\mrm{A}}$ does
not satisfy \eqref{eq:hypho2} for $m\geq2$, because
$[\wtcG_{\mrm{A}}]_{\sigma 1,\sigma 1}>0$.
This shows that, in order use Theorem~\ref{thm:mn}
for $m\geq2$,
one cannot define the Gram matrix of the A-model
in a way that is done in the peak model.
\begin{rem}
Let us recall that in the peak model the parameters
$\{a_j\}$ are all necessarily distinct. However, putting
$a_j=-z_1+\delta_{j-1}$ for $\delta_j\neq0$ and $j\in J\setm\{1\}$
and $m\geq2$, and formally taking the limits
$\delta_j\lto\delta_{j-1}$, as well as $\delta_1\lto0$,
one can show by induction that the $Q$-function associated
with the Gram matrix $\cG$ of the peak model approaches $r$,
up to $O(\delta_1)$, with
$[\cG_{\mrm{A}}]_{\sigma m,\sigma^\prime j}=
[\wtcG_{\mrm{A}}]_{\sigma m,\sigma^\prime j}$.
Notice that $[\wtcG_{\mrm{A}}]_{\sigma m,\sigma^\prime j}$,
with $m\geq2$, satisfies the second relation in \eqref{eq:GAcomm}.
On the other hand, taking the above described limits,
the matrix element $\cG_{\sigma 1,\sigma^\prime 2}=
[\wtcG_{\mrm{A}}]_{\sigma 1,\sigma^\prime 1}+O(\delta_1)$,
so the requirement that $\cG$ must be diagonal in
$j$---which is essential in applying the extension theory of
symmetric operators in the peak model---fails for $m\geq2$.
For $m=1$, both models produce the same
Nevanlinna function $r(z)=\cG_{\mrm{A}}/(z_1-z)$, provided that
$\cG_{\mrm{A}}=\wtcG_{\mrm{A}}(\in[\bbC^d])$.
\end{rem}
\begin{exam}\label{exam:peak}
We briefly demonstrate by a concrete example
the case when the eigenvectors $\{g_\sigma(z)\}$
of $L_{\max}$ are not orthogonal for distinct $z$,
that is, the example when the peak model cannot be
applied. We consider the two-particle Rashba
spin-orbit-coupled operator $L$ in $\fH_0=\mrm{L}^2(\bbR^6)\ot\bbC^4$
with point-interaction between the two cold atoms
\cite{Jursenas18b}. The operator is nonseparable
in the center-of-mass coordinate system
$(x,X)\in\bbR^3\times\bbR^3$ ($x$ is the distance between the
two atoms, $X$ is the center-of-mass coordinate) for
a nonzero spin-orbit-coupling strength $\varepsilon$.
The interaction is modeled
by the Dirac distribution $\vp_\sigma\in\fH_{-4}\setm\fH_{-3}$
concentrated at $x=0$:
$\braket{\vp_\sigma,f}=N_\sigma f_\sigma(0,X)$,
$f=\sum_\sigma f_\sigma\ot\ket{\sigma}\in\fH_4$,
$N_\sigma>0$ is the normalization constant,
$\{\ket{\sigma}\}$ is an orthonormal basis of
$\bbC^4$. Thus we have $m=2$ and $d=4$. For simplicity,
we assume that $\varepsilon$ is negligibly small.
In this regime $L$ approximates, up to $O(\varepsilon)$,
the operator $(-2\Delta_x-\frac{1}{2}\Delta_X)\ot I_{\bbC^4}$
(\cf \cite[Eq.~(8)]{Albeverio01}),
where $\Delta_x$ (resp. $\Delta_X$) is the Laplacian
in $x\in\bbR^3$ (resp. $X\in\bbR^3$). Then the
distribution $g_\sigma(z)\in\fH_{-2}\setm\fH_{-1}$
admits a relatively simple form
\[
g_\sigma(z)=-\frac{N_\sigma}{(2\pi)^3}
\frac{z K_2(\abs{\cdot-W_0}\sqrt{-z})}{\abs{\cdot-W_0}^2}
\ot\ket{\sigma}\,,\quad W_0=(0,X)\,,\quad
z\in\bbC\setm[0,\infty]
\]
where $K_2$ is the Macdonald function of second order.
Because $m=2$, it suffices to have in the (peak) model
two distinct model parameters $z_1$, $z_2<0$ (or else
$a_1$, $a_2>0$). Because now $b_2(L)=(L-z_1)(L-z_2)$,
the Gram matrix element $\cG_{\sigma 1,\sigma 2}$ reads
\begin{align*}
\cG_{\sigma 1,\sigma 2}:=&
\braket{g_\sigma(z_1),g_\sigma(z_2)}_{-2}=
\braket{g_\sigma(z_1),b_2(L)^{-1}g_\sigma(z_2)}_{0}
\\
=&\braket{\vp_\sigma,[(L-z_1)(L-z_2)]^{-2}\vp_\sigma}
=\braket{\vp_\sigma,\frac{\pd^2}{\pd u\,\pd v}
[(L-u)(L-v)]^{-1}\vp_\sigma\vrt_{u=z_1\,;\;v=z_2} }
\\
=&\braket{\vp_\sigma,\frac{\pd^2}{\pd u\,\pd v}
\frac{g_\sigma(u)-g_\sigma(v)}{u-v}\vrt_{u=z_1\,;\;v=z_2} }
\\
=&-\frac{N^2_\sigma}{(2\pi)^3}\lim_{r\lto0}
\frac{1}{r^2}\frac{\pd^2}{\pd u\,\pd v}
\frac{uK_2(r\sqrt{-u})-vK_2(r\sqrt{-v})}{u-v}
\vrt_{u=-a_1\,;\;v=-a_2}
\\
=&\frac{N^2_\sigma}{(2\pi)^3 2^4}
\frac{2a_1a_2\log(a_1/a_2)-a^2_1+a^2_2}{(a_2-a_1)^3}
\end{align*}
up to $O(\varepsilon^2)$
(a more accurate computation of $\cG_{\sigma 1,\sigma 2}$
shows that the term $O(\varepsilon)$ vanishes).
\end{exam}
%%%%%%%%%%%%%%%%%%%%%%%%%%%%%%%%%%%%%%%%%%%%%%%%%%%%%%%%%%%%%%
%%%%%%%%%%%%%%%%%%%%%%%%%%%%%%%%%%%%%%%%%%%%%%%%%%%%%%%%%%%%%%
%%%%%%%%%%%%%%%%%%%%%%%%%%%%%%%%%%%%%%%%%%%%%%%%%%%%%%%%%%%%%%
%%%%%%%%%%%%%%%%%%%%%%%%%%%%%%%%%%%%%%%%%%%%%%%%%%%%%%%%%%%%%%
\section{Projections}
%%%%%%%%%%%%%%%%%%%%%%%%%%%%%%%%%%%%%%%%%%%%%%%%%%%%%%%%%%%%%%
%%%%%%%%%%%%%%%%%%%%%%%%%%%%%%%%%%%%%%%%%%%%%%%%%%%%%%%%%%%%%%
%%%%%%%%%%%%%%%%%%%%%%%%%%%%%%%%%%%%%%%%%%%%%%%%%%%%%%%%%%%%%%
%%%%%%%%%%%%%%%%%%%%%%%%%%%%%%%%%%%%%%%%%%%%%%%%%%%%%%%%%%%%%%
In the remaining part of the present paper
we develop the A-model in the subspaces
\[
\cH^{\prime\,-}_{\mrm{A}}:=
(\fH^-_m\op\bbC^{md},[\cdot,\cdot]^\prime_{\mrm{A}})\,,
\quad
\cH^{\prime\,+}_{\mrm{A}}:=
(\fH^+_m\op\bbC^{md},[\cdot,\cdot]^\prime_{\mrm{A}})
\]
of $\cH^\prime_{\mrm{A}}$, by assuming that
the Hilbert space $\fH_m=\fH^-_m\op\fH^+_m$
is the Hilbert (orthogonal) sum of its subspaces
$\fH^\pm_m$. The analogue of Theorem~\ref{thm:mn},
in the case when $\fH^\pm_{n+1}\subseteq\fH^\pm_n$
($\forall n\in\bbZ$) densely, is stated in Theorem~\ref{thm:mn2}.
First we discuss the properties of the projections that we use
later on, then we consider the restrictions to $\fH^\pm_n$
of $L_n$, and then finally we describe the min-max
operators defined in $\cH^{\prime\,\pm}_{\mrm{A}}$.
The principal difference between the case of
the minimal operator
$A_{\min}$ considered in $\cH_{\mrm{A}}$ and its
analogue $A^-_{\min}$ (resp. $A^+_{\min}$) considered in
$\cH^{\prime\,-}_{\mrm{A}}$ (resp. $\cH^{\prime\,+}_{\mrm{A}}$)
is that $A^-_{\min}$ (resp. $A^+_{\min}$) becomes
nondensely defined in general, that is, the corresponding
maximal operator $A^-_{\max}$ (resp. $A^+_{\max}$)
is a linear relation.

Let $P^-_n$ be an orthogonal projection in $\fH_n$
onto a subspace $\fH^-_n\subseteq\fH_n$ and let
$P^+_n:=I_{\fH_n}-P^-_n$, an orthogonal projection
in $\fH_n$ onto $\fH^+_n:=(\fH^-_n)^{\bot_{\fH_n}}$.
Here and elsewhere the subscript in $\bot_{\fH_n}$ indicates
with respect to which Hilbert space one takes the orthogonal
complement.
\begin{lem}
$P^-_n$ is an orthogonal projection in $\fH_n$ onto
a subspace $\fH^-_n$ iff
\[
P^-_0(n):=b_n(L)^{1/2}P^-_nb_n(L)^{-1/2}
\]
is an orthogonal projection in $\fH_0$ onto
a subspace
\[
\fH^-_0(n):=P^-_0(n)\fH_0=b_n(L)^{1/2}\fH^-_n\,.
\]
If this is the case, then
\[
P^+_0(n):=I_{\fH_0}-P^-_0(n)=
b_n(L)^{1/2}P^+_nb_n(L)^{-1/2}
\]
is an orthogonal projection in $\fH_0$ onto
a subspace
\[
\fH^+_0(n):=\fH^-_0(n)^{\bot_{\fH_0}}=
P^+_0(n)\fH_0=b_n(L)^{1/2}\fH^+_n\,.
\]
\end{lem}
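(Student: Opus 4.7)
The plan is to leverage the fact, already recorded in the text immediately after the definition of the scaled inner product, that $b_n(L)^{1/2}$ is a unitary operator from $\fH_n$ onto $\fH_0$. Write $U_n:=b_n(L)^{1/2}$, so that $U_n^{-1}=b_n(L)^{-1/2}$ and, since $U_n$ is unitary between Hilbert spaces, $U_n^*=U_n^{-1}$. The entire statement is then a general fact: if $U\co\mathcal H\to\mathcal K$ is a unitary between Hilbert spaces, then $P$ is an orthogonal projection in $\mathcal H$ onto a (closed) subspace $\mathcal M$ iff $UPU^{-1}$ is an orthogonal projection in $\mathcal K$ onto $U\mathcal M$. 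I would set the lemma up as an instance of this principle with $U=U_n$, $P=P^-_n$, $\mathcal M=\fH^-_n$.

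First I would verify the ``iff'' for idempotency and self-adjointness. Idempotency is immediate: $(U_nP^-_nU_n^{-1})^2=U_n(P^-_n)^2U_n^{-1}=U_nP^-_nU_n^{-1}$, and conversely, if $P^-_0(n)$ is idempotent then so is $P^-_n=U_n^{-1}P^-_0(n)U_n$. For self-adjointness in $\fH_0$,
\[
(U_nP^-_nU_n^{-1})^*=(U_n^{-1})^*(P^-_n)^*U_n^*=U_n(P^-_n)^*U_n^{-1},
\]
which coincides with $U_nP^-_nU_n^{-1}$ iff $(P^-_n)^*=P^-_n$ in $\fH_n$; here the $\fH_n$-adjoint is taken with respect to $\braket{\cdot,\cdot}_n=\braket{U_n\cdot,U_n\cdot}_0$, and unitarity of $U_n$ guarantees this equivalence passes between the two inner products. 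Thus $P^-_0(n)$ is an orthogonal projection in $\fH_0$ exactly when $P^-_n$ is an orthogonal projection in $\fH_n$.

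Next I would identify the range. Since $U_n$ is a bijection $\fH_n\to\fH_0$,
\[
P^-_0(n)\fH_0=U_nP^-_nU_n^{-1}\fH_0=U_nP^-_n\fH_n=U_n\fH^-_n=b_n(L)^{1/2}\fH^-_n,
\]
which is $\fH^-_0(n)$ as defined. For the second assertion about $P^+_0(n)$, the identity $P^+_n=I_{\fH_n}-P^-_n$ combined with $U_nI_{\fH_n}U_n^{-1}=I_{\fH_0}$ gives
\[
I_{\fH_0}-P^-_0(n)=U_n(I_{\fH_n}-P^-_n)U_n^{-1}=U_nP^+_nU_n^{-1},
\]
which is the displayed formula for $P^+_0(n)$. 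By the first part (applied to $P^+_n$), this is an orthogonal projection in $\fH_0$ onto $U_n\fH^+_n=b_n(L)^{1/2}\fH^+_n$. Finally, that $U_n\fH^+_n$ coincides with $\fH^-_0(n)^{\bot_{\fH_0}}$ follows from the unitarity of $U_n$, which maps $\fH^+_n=(\fH^-_n)^{\bot_{\fH_n}}$ onto $(U_n\fH^-_n)^{\bot_{\fH_0}}=\fH^-_0(n)^{\bot_{\fH_0}}$.

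There is no real obstacle; the only point requiring a touch of care is that ``orthogonal projection'' in $\fH_n$ refers to the scaled inner product, and one must make sure the self-adjointness transports correctly under $U_n$. This is handled automatically by the very definition $\braket{\cdot,\cdot}_n=\braket{U_n\cdot,U_n\cdot}_0$, so the proof reduces to the three short calculations above.
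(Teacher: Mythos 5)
Your proposal is correct and follows essentially the same route as the paper: the paper likewise reduces idempotency to $P^-_0(n)^2=b_n(L)^{1/2}(P^-_n)^2b_n(L)^{-1/2}$ and establishes the adjoint identity $P^-_0(n)^*=b_n(L)^{1/2}P^{-\,*}_nb_n(L)^{-1/2}$, only it does so by writing out the defining inner-product relation with $u=b_n(L)^{1/2}f$ rather than by invoking the unitarity of $b_n(L)^{1/2}\co\fH_n\to\fH_0$ as an abstract conjugation principle. The remaining range and complement identifications are exactly the "straightforward" verifications the paper leaves to the reader.
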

\begin{proof}
Because
\[
P^-_0(n)^2=b_n(L)^{1/2}(P^-_n)^2b_n(L)^{-1/2}
\]
$P^-_0(n)$ is a projection iff so is $P^-_n$.

We show that the adjoint $P^-_0(n)^*$ of
$P^-_0(n)$ in $\fH_0$ is given by
\begin{equation}
P^-_0(n)^*=b_n(L)^{1/2}P^{-\,*}_nb_n(L)^{-1/2}
\label{eq:p1}
\end{equation}
on $\fH_0$, where $P^{-\,*}_n$ is the adjoint of
$P^-_n$ in $\fH_n$;
then it follows that $P^-_0(n)$ is self-adjoint
in $\fH_0$ iff so is $P^-_n$ in $\fH_n$:
The graph of the
adjoint $P^-_0(n)^*$ in $\fH_0$ consists of $(y,x)\in\fH^2_0$
such that $(\forall u\in\fH_0)$
\[
\braket{u,x}_0=\braket{P^-_0(n)u,y}_0\,.
\]
Every $u$ is of the form $u=b_n(L)^{1/2}f$
with some $f\in\fH_n$. Then
\[
\braket{u,x}_0=\braket{f,b_n(L)^{-1/2}x}_n
\]
and
\[
\braket{P^-_0(n)u,y}_0=\braket{P^-_nf,b_n(L)^{-1/2}y}_n
=\braket{f,P^{-\,*}_nb_n(L)^{-1/2}y}_n
\]
from which the claim follows.
The remaining statements are verified straightforwardly.
\end{proof}
The present lemma allows us to freely transfer
between the $\fH_n$-space representation and
the $\fH_0$-space representation.
In particular $\fH^-_0(0)=\fH^-_0$, but in general
$\fH^-_0(n)\neq \fH^-_0$ for $n\neq0$. The
equality holds for all $n$ iff
\begin{equation}
P^-_n=b_n(L)^{-1/2}P^-_0b_n(L)^{1/2}
\label{eq:Pn}
\end{equation}
on $\fH_n$; in this
case one would have $\fH^-_{n+l}=b_l(L)^{-1/2}\fH^-_n$
for $l\in\bbN_0$ (\cf Example~\ref{exam:pn}).
Moreover, $P^\pm_0(n)P^\mp_0(n+l)\neq0$ in general.
However, the product of projections vanishes
for $l\in2\bbZ$, provided that $P^-_{n+1}\subseteq P^-_n$;
see Lemma~\ref{lem:mn2l} below.

Let $n\in\bbZ$, $l\in\bbN_0$ as above and let
\[
\fH^-_{n,l}:=P^-_n\fH_{n+l}=\fH^-_n\mcap\fH_{n+l}\,.
\]
The second equality in $\fH^-_{n,l}$ is a particular
case of the following statement:
$P^-_n(\fH_n\mcap\fX)=\fH^-_n\mcap\fX$ for an arbitrary set $\fX$.
Indeed, the set $\fH^-_n\mcap\fX$ consists of $f\in\fH_{n}\mcap\fX$
such that $f\in P^-_n\fH_n$; using $f=f^-+f^+$
with $f^\pm:=P^\pm_nf$ one therefore gets that $f^+=0$ and
$f^-\in P^-_n(\fH_n\mcap\fX)$. For
$\fX=\fH_{n+l}$ $(\subseteq\fH_n)$ one deduces the above
equality as claimed.

Using the definition of the projection $P^-_0(n)$
it follows that
\[
\fH^-_{n,l}=b_n(L)^{-1/2}\fH^-_l(n)\,,\quad
\fH^-_l(n):=P^-_0(n)\fH_l=\fH^-_0(n)\mcap\fH_l
\]
and hence $\fH^-_l(n)$ is a subset of $\fH_l$.
Similarly one defines
$\fH^+_{n,l}:=P^+_n\fH_{n+l}$ and
$\fH^+_l(n):=P^+_0(n)\fH_l$. We note that
\[
P^s_0(n)P^{s^\prime}_0(n^\prime)=
P^{s^\prime}_0(n^\prime)P^s_0(n)\,,
\quad
s,s^\prime\in\{-,+\}\,,\quad
n,n^\prime\in\bbZ
\]
and that
\[
\fH^s_l(n)\mcap\fH^{s^\prime}_0(n^\prime)=
\fH^s_l(n)\mcap\fH^{s^\prime}_l(n^\prime)
\]
$l\in\bbN_0$. Both statements are shown by using
$P^-_0(n)\fX=\fH^-_0(n)\mcap\fX$, $\fX\subseteq\fH_0$.

In general $\fH^-_{n,l}\neq\fH^-_{n+l}$,
but the following holds.
\begin{lem}\label{lem:mainl}%
Let $n\in\bbZ$, $l\in\bbN_0$.
$\fH^-_{n,l}$ (resp. $\fH^-_l(n)$) is dense in $\fH^-_n$
(resp. $\fH^-_0(n)$).
Moreover $\fH^-_{n,l}=\fH^-_{n+l}$ iff
$\fH^-_{n+l}$ is dense in $\fH^-_n$, or equivalently,
iff $P^-_{n+l}\subseteq P^-_n$
(in fact, if $\fH^-_{n+l}\subseteq\fH^-_n$ densely,
then $\fH^+_{n+l}\subseteq\fH^+_n$ densely and
$P^\pm_{n+l}\subseteq P^\pm_n$; conversely, if
$P^-_{n+l}\subseteq P^-_n$, then $P^+_{n+l}\subseteq P^+_n$
and $\fH^\pm_{n+l}\subseteq\fH^\pm_n$ densely);
if this is the case then
\[
P^-_0(n+l)\subseteq
b_l(L)^{1/2}P^-_0(n)b_l(L)^{-1/2}
\]
and hence $\fH^-_0(n+l)=b_l(L)^{1/2}\fH^-_l(n)$
(and similarly for $P^+_0(n+l)$ and $\fH^+_0(n+l)$).
\end{lem}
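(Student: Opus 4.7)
My plan is to treat the three parts of the lemma in sequence. For the density of $\fH^-_{n,l}$ in $\fH^-_n$ (and analogously $\fH^-_l(n)$ in $\fH^-_0(n)$), I would invoke the standard fact that the scale $(\fH_k)_{k\in\bbZ}$ has $\fH_{n+l}$ densely included in $\fH_n$; combined with boundedness and surjectivity of $P^-_n\co\fH_n\lto\fH^-_n$, the image $\fH^-_{n,l}=P^-_n\fH_{n+l}$ is dense in $P^-_n\fH_n=\fH^-_n$ in the $\fH_n$-topology. The parallel claim for $\fH^-_l(n)=P^-_0(n)\fH_l$ follows either by the same argument using $\fH_l$ dense in $\fH_0$, or by transferring via the unitary $b_n(L)^{1/2}\co\fH_n\lto\fH_0$, which maps $\fH^-_{n,l}$ onto $\fH^-_l(n)$ and $\fH^-_n$ onto $\fH^-_0(n)$ by the previous lemma.

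For the chain of equivalences $\fH^-_{n,l}=\fH^-_{n+l}\iff\fH^-_{n+l}$ dense in $\fH^-_n\iff P^-_{n+l}\subseteq P^-_n$, I would argue cyclically. The implication $P^-_{n+l}\subseteq P^-_n\implies\fH^-_{n,l}=\fH^-_{n+l}$ is immediate: for every $f\in\fH_{n+l}$, $P^-_nf=P^-_{n+l}f\in\fH^-_{n+l}$, so $\fH^-_{n,l}=P^-_n\fH_{n+l}=\fH^-_{n+l}$; density then follows from the first part. For the converse, assuming $\fH^-_{n,l}=\fH^-_{n+l}$, for $f\in\fH_{n+l}$ we have $P^-_nf\in\fH^-_{n,l}=\fH^-_{n+l}\subseteq\fH_{n+l}$, hence also $P^+_nf=f-P^-_nf\in\fH^+_n\mcap\fH_{n+l}=\fH^+_{n,l}$. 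The analogous argument with the roles of $-$ and $+$ interchanged (using $P^+_n=I_{\fH_n}-P^-_n$) gives $P^+_nf\in\fH^+_{n+l}$, and by uniqueness of the $\fH_{n+l}$-orthogonal decomposition, the two writings $f=P^-_nf+P^+_nf=P^-_{n+l}f+P^+_{n+l}f$ coincide, yielding $P^\pm_{n+l}\subseteq P^\pm_n$. The parenthetical density statements for $\fH^\pm_{n+l}$ in $\fH^\pm_n$ then follow from the first part applied to $\pm$.

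Finally, assuming $P^-_{n+l}\subseteq P^-_n$, for any $x\in\fH_0$ one has $b_{n+l}(L)^{-1/2}x\in\fH_{n+l}$ and $P^-_nb_{n+l}(L)^{-1/2}x=P^-_{n+l}b_{n+l}(L)^{-1/2}x\in\fH^-_{n+l}\subseteq\fH_{n+l}$, so the composition $b_{n+l}(L)^{1/2}P^-_nb_{n+l}(L)^{-1/2}x$ is defined and equals $P^-_0(n+l)x$. Using the factorization $b_{n+l}(L)^{1/2}=b_l(L)^{1/2}b_n(L)^{1/2}$, the right-hand side rewrites as $b_l(L)^{1/2}P^-_0(n)b_l(L)^{-1/2}x$, giving the claimed inclusion of projections; taking ranges then yields $\fH^-_0(n+l)=P^-_0(n+l)\fH_0=b_l(L)^{1/2}P^-_0(n)\fH_l=b_l(L)^{1/2}\fH^-_l(n)$, with the corresponding statement for $+$ following in the same way.

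The main obstacle lies in the converse step of the chain of equivalences, where one must reconcile the a priori distinct $\fH_n$- and $\fH_{n+l}$-orthogonal decompositions of elements of $\fH_{n+l}$. The resolution hinges on the closedness of $\fH^\pm_{n+l}$ in $\fH_{n+l}$ combined with the inclusion $\fH^\pm_{n+l}\subseteq\fH^\pm_{n,l}$, which together with the symmetry between the $-$ and $+$ arguments force the two orthogonal decompositions to coincide on $\fH_{n+l}$.
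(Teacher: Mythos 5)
The density claim and the final scaling inclusion are handled correctly (the former by continuity of $P^-_n$ on the dense set $\fH_{n+l}$ rather than by the paper's orthogonal-complement computation; both work). The problem is in the chain of equivalences, where there are two genuine gaps. First, your ``cycle'' never closes: writing (A) for $\fH^-_{n,l}=\fH^-_{n+l}$, (B) for density of $\fH^-_{n+l}$ in $\fH^-_n$, and (C) for $P^-_{n+l}\subseteq P^-_n$, you establish (C)$\Rightarrow$(A), (A)$\Rightarrow$(B), and (attempt) (A)$\Rightarrow$(C), but at no point do you use (B) as a hypothesis to derive anything. The implication (B)$\Rightarrow$(C) --- which is where the paper invests most of its effort --- is simply absent, so the asserted equivalence with density is only half proved.

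Second, your argument for (A)$\Rightarrow$(C) does not go through. From (A) you get $P^-_nf\in\fH^-_{n+l}$ and $P^+_nf\in\fH^+_{n,l}=\fH^+_n\mcap\fH_{n+l}$, but the claim that ``the analogous argument with the roles of $-$ and $+$ interchanged'' yields $P^+_nf\in\fH^+_{n+l}$ presupposes $\fH^+_{n,l}=\fH^+_{n+l}$, which is not part of the hypothesis. Membership in $\fH^+_{n+l}$ means orthogonality to $\fH^-_{n+l}$ in the $\fH_{n+l}$-inner product, whereas membership in $\fH^+_n$ means orthogonality to $\fH^-_n$ in the $\fH_n$-inner product; since $\braket{\cdot,\cdot}_{n+l}$ and $\braket{\cdot,\cdot}_n$ are different, neither implies the other. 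This incompatibility of orthogonality across levels of the scale is exactly what the lemma is about, and the same objection defeats the ``resolution'' sketched in your last paragraph (the inclusion $\fH^+_{n+l}\subseteq\fH^+_{n,l}$ is unjustified for the same reason). The paper avoids the issue by working with a single inner product throughout: from $(\fH^-_{n+l})^{\bot_{\fH_n}}=\fH^+_n$ it deduces $P^+_nP^-_{n+l}=0$, then shows that the same density hypothesis forces $(\fH^+_{n+l})^{\bot_{\fH_n}}=\fH^-_n$ and hence $P^-_nP^+_{n+l}=0$, and these two relations together give $P^\pm_{n+l}\subseteq P^\pm_n$. You would need to supply an argument of this kind to repair both gaps.
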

\begin{proof}
The orthogonal complement $(\fH^-_{n,l})^{\bot_{\fH_n}}$
in $\fH_n$ of $\fH^-_{n,l}$ consists of all $g\in\fH_n$
such that $(\forall f\in\fH_{n+l})$
\[
0=\braket{P^-_nf,g}_n=\braket{f,P^-_ng}_n\,.
\]
Because $\fH_{n+l}$ is dense in $\fH_n$, this implies
$P^-_ng=0$; hence
$(\fH^-_{n,l})^{\bot_{\fH_n}}=\fH^+_n$.
This shows that $\fH^-_{n,l}\subseteq\fH^-_n$
densely in $\norm{\cdot}_n$-norm.
Similarly, the orthogonal complement
$\fH^-_{l}(n)^{\bot_{\fH_0}}$
in $\fH_0$ of $\fH^-_{l}(n)$ consists of all $v\in\fH_0$
such that $(\forall u^-\in\fH^-_{l}(n))$
$0=\braket{u^-,v}_0$. Now $u^-$ is of the form
$u^-=b_n(L)^{1/2}P^-_nf$ with some $f\in\fH_{n+l}$, so
\[
\braket{u,v}_0=\braket{b_n(L)^{1/2}P^-_nf,v}_0=
\braket{P^-_nf,b_n(L)^{-1/2}v}_n=
\braket{f,P^-_nb_n(L)^{-1/2}v}_n\,.
\]
This implies $P^-_nb_n(L)^{-1/2}v=0$, and hence
\[
\fH^-_{l}(n)^{\bot_{\fH_0}}=b_n(L)^{1/2}\fH^+_n=
\fH^+_0(n)\,.
\]
One concludes that $\fH^-_l(n)\subseteq\fH^-_0(n)$
densely in $\norm{\cdot}_0$-norm.

Next one shows that $\fH^-_{n+l}\subseteq\fH^-_n$
densely iff $P^-_{n+l}\subseteq P^-_n$.
The orthogonal complement $(\fH^-_{n+l})^{\bot_{\fH_n}}$
in $\fH_n$ of $\fH^-_{n+l}$ is the set of all
$g\in\fH_n$ such that $(\forall f\in\fH_{n+l})$
$0=\braket{P^-_{n+l}f,g}_n$.
If $P^-_{n+l}\subseteq P^-_n$, then one arrives at
the previously considered case, namely,
$(\fH^-_{n+l})^{\bot_{\fH_n}}=(\fH^-_{n,l})^{\bot_{\fH_n}}$;
hence $\fH^-_{n+l}\subseteq\fH^-_n$ densely.
Moreover,
$P^-_{n+l}\subseteq P^-_n$ implies that also
$\fH^+_{n+l}\subseteq\fH^+_n$ densely:
$(\fH^+_{n+l})^{\bot_{\fH_n}}$ is the set of all
$g\in\fH_n$ such that $(\forall f\in\fH_{n+l})$
\[
0=\braket{P^+_{n+l}f,g}_n=\braket{f,g}_n-
\braket{P^-_{n+l}f,g}_n\,;
\]
but
\[
\braket{P^-_{n+l}f,g}_n=
\braket{P^-_{n}f,g}_n=\braket{f,P^-_{n}g}_n
\]
so
\[
0=\braket{P^+_{n+l}f,g}_n=\braket{f,P^+_ng}_n\,.
\]
This shows $(\fH^+_{n+l})^{\bot_{\fH_n}}=\fH^-_n$.
Conversely,
$(\fH^-_{n+l})^{\bot_{\fH_n}}=\fH^+_{n}$ implies that
$(\forall f\in\fH_{n+l})$ $(\forall g\in\fH_n)$
\[
0=\braket{P^-_{n+l}f,P^+_ng}_n=
\braket{P^+_nP^-_{n+l}f,g}_n
\]
hence $P^+_nP^-_{n+l}=0$. On the other hand,
$(\fH^-_{n+l})^{\bot_{\fH_n}}=\fH^+_{n}$ also implies that
$(\fH^+_{n+l})^{\bot_{\fH_n}}=\fH^-_n$:
$(\fH^+_{n+l})^{\bot_{\fH_n}}$ is the set of all
$g\in\fH_n$ such that $(\forall f\in\fH_{n+l})$
\[
0=\braket{P^+_{n+l}f,g}_n=\braket{f,g}_n-
\braket{P^-_{n+l}f,g}_n\,;
\]
now
\[
\braket{P^-_{n+l}f,g}_n=
\braket{P^-_{n+l}f,P^-_ng}_n+
\braket{P^-_{n+l}f,P^+_ng}_n
\]
and
\[
\braket{P^-_{n+l}f,P^+_ng}_n=
\braket{P^+_nP^-_{n+l}f,g}_n=0
\]
so
\[
0=
\braket{P^+_{n+l}f,g}_n=\braket{f,g}_n-
\braket{P^-_{n+l}f,P^-_ng}_n
=\braket{P^-_{n+l}f,g}-\braket{P^-_{n+l}f,P^-_ng}_n\,.
\]
As a result
$(\fH^+_{n+l})^{\bot_{\fH_n}}$ is the set of all
$g\in\fH_n$ such that $(\forall f^-\in\fH^-_{n+l})$
$0=\braket{f^-,P^+_ng}_n$.
Because by hypothesis $\fH^-_{n+l}$ is dense in
$\fH^-_n$, this shows
$(\fH^+_{n+l})^{\bot_{\fH_n}}=\fH^-_n$, as claimed.
Sequentially,
$(\forall f\in\fH_{n+l})$ $(\forall g\in\fH_n)$
\[
0=\braket{P^+_{n+l}f,P^-_ng}_n=
\braket{P^-_nP^+_{n+l}f,g}_n
\]
and hence $P^-_nP^+_{n+l}=0$. This together with
$P^+_nP^-_{n+l}=0$ implies that
$P^\pm_{n+l}\subseteq P^\pm_n$.

If $P^-_{n+l}\subseteq P^-_{n}$ then
$\fH^-_{n,l}=\fH^-_{n+l}$ by definition. Assuming the converse,
again by definition one gets that
$P^-_{n}\fH_{n+l}=P^-_{n+l}\fH_{n+l}$, \ie
$P^-_{n}\vrt_{\fH_{n+l}}=P^-_{n+l}$. This shows that
$\fH^-_{n,l}=\fH^-_{n+l}$ iff
$\fH^-_{n+l}$ is dense in $\fH^-_n$, or equivalently,
iff $P^-_{n+l}\subseteq P^-_n$.

Using $P^-_{n+l}\subseteq P^-_{n}$, for $u\in\fH_0$
\begin{align*}
P^-_0(n+l)u=&
b_{n+l}(L)^{1/2}P^-_{n+l}b_{n+l}(L)^{-1/2}u=
b_{n+l}(L)^{1/2}P^-_{n}b_{n+l}(L)^{-1/2}u
\\
=&b_l(L)^{1/2}P^-_0(n)b_l(L)^{-1/2}u
\end{align*}
and this completes the proof of the lemma.
\end{proof}
\begin{exam}\label{exam}
Let $\mrm{H}^n=W^n_2(\bbR^\nu)$, $\nu\in\bbN$, be the Sobolev
space; then $\mrm{L}^2=\mrm{L}^2(\bbR^\nu)=\mrm{H}^0$.
Let $L$ such that
\[
\fH_n=b_n(L)^{-1/2}(\mrm{L}^2\ot\bbC^4)=\mrm{H}^n\ot\bbC^4\,,
\quad n\in\bbZ
\]
and
\[
P^-_n(\mrm{H}^n\ot\bbC^4)=\mrm{H}^n\ot\bbC^1=\fH^-_n\,,
\quad
P^+_n(\mrm{H}^n\ot\bbC^4)=\mrm{H}^n\ot\bbC^3=\fH^+_n\,.
\]
Then $P^-_{n+1}\subseteq P^-_{n}$, and similarly for
$P^+_n$. The subspaces
\[
\fH^-_0(n)=b_n(L)^{1/2}(\mrm{H}^n\ot\bbC^1)\,,
\quad
\fH^+_0(n)=b_n(L)^{1/2}(\mrm{H}^n\ot\bbC^3)\,.
\]
For $l\in\bbN_0$,
the subset
\[
\fH^-_{n,l}=(\mrm{H}^n\ot\bbC^1)\mcap
(\mrm{H}^{n+l}\ot\bbC^4)=\mrm{H}^{n+l}\ot\bbC^1=\fH^-_{n+l}
\]
is dense in $\fH^-_{n}$; and similarly for
$\fH^+_{n,l}=\fH^+_{n+l}\subseteq\fH^+_n$. Likewise, the subset
\begin{align*}
\fH^-_l(n)=&[b_n(L)^{1/2}(\mrm{H}^n\ot\bbC^1)]\mcap
(\mrm{H}^l\ot\bbC^4)
=b_n(L)^{1/2}[(\mrm{H}^n\ot\bbC^1)\mcap(\mrm{H}^{n+l}\ot\bbC^4)]
\\
=&b_n(L)^{1/2}(\mrm{H}^{n+l}\ot\bbC^1)=
b_l(L)^{-1/2}\fH^-_0(n+l)
\end{align*}
is dense in $\fH^-_0(n)$, and similarly for
$\fH^+_l(n)\subseteq\fH^+_0(n)$.
\end{exam}
Due to the dense inclusion $\fH_{n+1}\subseteq\fH_n$
one also has the following result.
\begin{lem}\label{lem:mn2l}
Assume that $P^-_{n+1}\subseteq P^-_n$ for all $n\in\bbZ$.
Then
\[
\fH^-_0=\fH^-_0(2n)\,,\quad
\fH^-_0(1)=\fH^-_0(2n+1)\,.
\]
\end{lem}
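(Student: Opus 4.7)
The plan is to establish $\fH^-_0(n+2)=\fH^-_0(n)$ for every $n\in\bbZ$; iterating this relation in both directions starting from $n=0$ and $n=1$ then yields the two claimed equalities. By Lemma~\ref{lem:mainl} applied with $l=2$ (whose premise $P^-_{n+2}\subseteq P^-_n$ follows by iterating the hypothesis), one has $\fH^-_0(n+2)=(L-z_1)(\fH^-_0(n)\mcap\fH_2)$, so the equality reduces to showing that $\fH^-_0(n)$ is invariant under $(L-z_1)^{\pm 1}$, equivalently that $\fH^-_0(n)$ reduces $L$ (or its scale-shifted analogue $L_n$).

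The central step is the reducing property $L\fH^-_2\subseteq\fH^-_0$ (and symmetrically $L\fH^+_2\subseteq\fH^+_0$). I would use the identity $\braket{f,g}_2=\braket{(L-z_1)^{1/2}f,(L-z_1)^{1/2}g}_0=\braket{f,(L-z_1)g}_0$ for $f,g\in\fH_2$, which is a consequence of the self-adjointness of $(L-z_1)^{1/2}$. Taking $f\in\fH^-_2$ and $g\in\fH^+_2$, the $\fH_2$-orthogonality $\braket{f,g}_2=0$ together with $\braket{f,g}_0=0$ forces $\braket{f,Lg}_0=0$; self-adjointness of $L$ on $\fH_2$ then transfers this to $\braket{Lf,g}_0=0$. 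The density of $\fH^+_2$ in $\fH^+_0$ in $\fH_0$-norm, furnished by Lemma~\ref{lem:mainl}, extends the orthogonality to every $g\in\fH^+_0$, and hence $Lf\in\fH^-_0$.

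With $\fH^\pm_0$ reducing $L$, the resolvent $(L-z_1)^{-1}$ commutes with $P^-_0$ and maps $\fH^-_0$ into $\fH^-_0\mcap\fH_2=\fH^-_2$, while the reverse inclusion $(L-z_1)\fH^-_2\subseteq\fH^-_0$ is immediate from $Lf,z_1f\in\fH^-_0$; hence $\fH^-_0=(L-z_1)\fH^-_2=\fH^-_0(2)$. The identical orthogonality-plus-density argument carried out at scale level $n$ (with $L_n$ self-adjoint in $\fH_n$ and projections $P^\pm_n$ inheriting the compatibility from the hypothesis) yields $\fH^-_n=(L-z_1)\fH^-_{n+2}$; transporting by the unitary $b_n(L)^{1/2}\co\fH_n\to\fH_0$, which as a function of $L$ commutes with $(L-z_1)$, one concludes $\fH^-_0(n+2)=\fH^-_0(n)$ for every $n\in\bbZ$, and induction finishes both statements. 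The main technical obstacle will be the careful domain bookkeeping when moving $(L-z_1)^{1/2}$ across the inner product along the scale, but this becomes routine once one stays inside $\fH_{n+2}$.
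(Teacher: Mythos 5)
Your strategy is sound and genuinely different from the paper's. The paper proves the lemma directly at the level of the projections: it characterizes $\fH^+_0(n)$ as $\ker P^-_0(n)$, applies the scaling relation of Lemma~\ref{lem:mainl} twice to rewrite $\braket{v,P^-_0(n)u}_0$ as $\braket{P^-_0(n-2l)b_l(L)^{-1/2}v,u}_0$, and then uses the density of $\fH_l$ in $\fH_0$ to conclude $\ker P^-_0(n)=\ker P^-_0(n-2l)$. You instead first prove the reducing property $L\fH^\pm_{n+2}\subseteq\fH^\pm_n$ and derive the lemma from it. In the paper that implication runs the other way: the reducing property (Theorem~\ref{thm:prn}, via \eqref{eq:PPP}) is a \emph{consequence} of Lemma~\ref{lem:mn2l}. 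Since your proof of the reducing step does not invoke the lemma, there is no circularity, and your route makes explicit the equivalence of the two statements that the paper itself remarks on after Theorem~\ref{thm:surj}. The paper's argument buys brevity and uniformity in $l$; yours puts the operator-theoretic content (the subspaces reduce $L$) front and center.

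One step needs repair. With the paper's convention $\braket{f,g}_2=\braket{b_2(L)^{1/2}f,b_2(L)^{1/2}g}_0=\braket{(L-z_1)f,(L-z_1)g}_0$, the identity you write, $\braket{(L-z_1)^{1/2}f,(L-z_1)^{1/2}g}_0=\braket{f,(L-z_1)g}_0$, is the $\fH_1$ scalar product, not the $\fH_2$ one. If you literally used the $\fH_2$-orthogonality of $f\in\fH^-_2$ and $g\in\fH^+_2$ you would only obtain $\braket{(L-z_1)f,(L-z_1)g}_0=0$, i.e. $\fH^-_0(2)\perp\fH^+_0(2)$, which is automatic and yields nothing. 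The argument works once you replace ``$\fH_2$-orthogonality'' by ``$\fH_1$-orthogonality'': since $P^\pm_2\subseteq P^\pm_1$, one has $\fH^\pm_2\subseteq\fH^\pm_1$, hence $\braket{f,g}_1=0$, and then $\braket{f,g}_1=\braket{f,(L-z_1)g}_0$ together with $\braket{f,g}_0=0$ gives $\braket{f,Lg}_0=0$ as you intend. This is precisely where the odd-step hypothesis $P^-_{n+1}\subseteq P^-_n$ (and not merely $P^-_{n+2}\subseteq P^-_n$) enters, a point your write-up obscures. The rest --- self-adjointness of $L$, density of $\fH^+_2$ in $\fH^+_0$ from Lemma~\ref{lem:mainl}, the bijectivity of $L-z_1\co\fH_{n+2}\to\fH_n$ forcing $(L-z_1)\fH^-_{n+2}=\fH^-_n$, and the transport by the unitary $b_n(L)^{1/2}$ --- is correct.
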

\begin{proof}
We show that $\fH^+_0(n)=\fH^+_0(n-2l)$ for
$n\in\bbZ$, $l\in\bbN_0$;
by relabeling
$n-2l$ by $n$, the result extends to all
$l\in\bbZ$. Taking the orthogonal complements one
deduces an analogous result for $\fH^-_0(n)$.

We use two facts: that $\fH^+_0(n)=\ker P^-_0(n)$ and
that $\fH_l\subseteq\fH_0$ densely for $l\in\bbN_0$.
The kernel of $P^-_0(n)$ consists of $u\in\fH_0$
such that $P^-_0(n)u=0$; this is equivalent to saying that
$(\forall v\in\fH_0)$ $\braket{v,P^-_0(n)u}_0=0$.
By Lemma~\ref{lem:mainl}
\[
P^-_0(n)u=b_l(L)^{1/2}P^-_0(n-l)b_l(L)^{-1/2}u
\Rightarrow
P^-_0(n-l)b_l(L)^{-1/2}u=0\,.
\]
Thus $(\forall v\in\fH_0)$
\begin{align*}
0=&\braket{v,P^-_0(n)u}_0=
\braket{v,P^-_0(n-l)b_l(L)^{-1/2}u}_0
=\braket{b_l(L)^{-1/2}P^-_0(n-l)v,u}_0
\\
=&\braket{P^-_0(n-2l)b_l(L)^{-1/2}v,u}_0
\quad(\text{by Lemma~\ref{lem:mainl}})\,.
\end{align*}
Since every $v$ is of the form
$v=b_l(L)^{1/2}w$ with some $w\in\fH_l$,
it follows that $(\forall w\in\fH_l)$
\[
0=\braket{P^-_0(n-2l)w,u}_0=
\braket{w,P^-_0(n-2l)u}_0\,.
\]
Since $\fH_l\subseteq\fH_0$ densely, the latter
implies that $P^-_0(n-2l)u=0$; hence
\[
\fH^+_0(n)=\ker P^-_0(n)=\ker P^-_0(n-2l)=
\fH^+_0(n-2l)
\]
as claimed.
\end{proof}
Thus, if the hypothesis of Lemma~\ref{lem:mn2l} holds,
then the projections $P^-_0(n)$, $n\in\bbZ$, are
in fact characterized by only two projections:
$P^-_0=P^-_0(2n)$ and $P^-_0(1)=P^-_0(2n+1)$;
in this case $P^-_n$ is as in \eqref{eq:Pn} for
$n\in2\bbZ$, and
\[
P^-_n=b_n(L)^{-1/2}P^-_0(1)b_n(L)^{1/2}
\]
for $n\in2\bbZ+1$.
But the converse is not necessarily true in general.
\begin{exam}\label{exam:pn}
Let $P^-_n$ be as in \eqref{eq:Pn}. Then
$P^-_0(n)=P^-_0$ for all $n\in\bbZ$. Let $l\in\bbN_0$;
then
\[
\fH^-_{n,l}:=P^-_n\fH_{n+l}=
b_n(L)^{-1/2}P^-_0b_n(L)^{1/2}\fH_{n+l}=
b_n(L)^{-1/2}\fH^-_{0,l}
\]
while
\[
\fH^-_{n+l}:=P^-_{n+l}\fH_{n+l}=
b_{n}(L)^{-1/2}\fH^-_l\,.
\]
Thus $\fH^-_{n,l}=\fH^-_{n+l}$ iff
\[
\fH^-_{0,l}(=P^-_0\fH_l)=\fH^-_l(=P^-_l\fH_l)
\]
or what is the same, iff $P^-_0\supseteq P^-_l$.
\end{exam}
%%%%%%%%%%%%%%%%%%%%%%%%%%%%%%%%%%%%%%%%%%%%%%%%%%%%%%%%%%%%%%
%%%%%%%%%%%%%%%%%%%%%%%%%%%%%%%%%%%%%%%%%%%%%%%%%%%%%%%%%%%%%%
%%%%%%%%%%%%%%%%%%%%%%%%%%%%%%%%%%%%%%%%%%%%%%%%%%%%%%%%%%%%%%
%%%%%%%%%%%%%%%%%%%%%%%%%%%%%%%%%%%%%%%%%%%%%%%%%%%%%%%%%%%%%%
\section{Projected operators}\label{sec:Lnpm}
%%%%%%%%%%%%%%%%%%%%%%%%%%%%%%%%%%%%%%%%%%%%%%%%%%%%%%%%%%%%%%
%%%%%%%%%%%%%%%%%%%%%%%%%%%%%%%%%%%%%%%%%%%%%%%%%%%%%%%%%%%%%%
%%%%%%%%%%%%%%%%%%%%%%%%%%%%%%%%%%%%%%%%%%%%%%%%%%%%%%%%%%%%%%
%%%%%%%%%%%%%%%%%%%%%%%%%%%%%%%%%%%%%%%%%%%%%%%%%%%%%%%%%%%%%%
Let $n\in\bbZ$. By scaling every self-adjoint operator $L_n$
in $\fH_n$ admits the form
\begin{equation}
L_n=b_n(L)^{-1/2}Lb_n(L)^{1/2}\,,\quad L=L_0
\label{eq:Ln1}
\end{equation}
on $\dom L_n=\fH_{n+2}$.
To every $L_n$ one associates densely defined
(Lemma~\ref{lem:mainl})
projected operators
\[
L^-_n:=P^-_nL_n\vrt_{\fH^-_{n,2}}\,,\quad
L^+_n:=P^+_nL_n\vrt_{\fH^+_{n,2}}
\]
in $\fH^-_n$ and $\fH^+_n$, respectively.
In analogy to \eqref{eq:Ln1}, every operator $L^-_n$
admits the form
\[
L^-_n=b_n(L)^{-1/2}L^-_0(n)b_n(L)^{1/2}\,,
\quad L^-_0(n):=P^-_0(n)L\vrt_{\fH^-_2(n)}
\]
and similarly for $L^+_n$. The operators $L^\pm_0(n)$
are considered in $\fH^\pm_0(n)$, and hence they are
densely defined.

Using $\fH^-_0(n):=P^-_0(n)\fH_0$ and $\fH_0=(L-z_1)\fH_2$,
$\fH^-_0(n)$ is the sum of sets
\begin{equation}
\fH^-_0(n)=\ran(L^-_0(n)-z_1)+P^-_0(n)L\fH^+_2(n)\,.
\label{eq:de}
\end{equation}
Thus in general the operator $L^-_0(n)-z_1$ is not
surjective (unlike $L-z_1$). But the following holds.
\begin{thm}\label{thm:surj}
Under hypothesis of Lemma~\ref{lem:mn2l}
the operator $L^-_0(n)-z_1$, $n\in\bbZ$, is surjective.
\end{thm}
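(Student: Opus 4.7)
My plan is to reduce the claim to showing that $P^-_0(n)$ commutes with $L-z_1$ on $\fH_2$. Granting this commutation, for any $v\in\fH^+_2(n)$ one has $P^-_0(n)v=0$, hence
\[
P^-_0(n)Lv=P^-_0(n)(L-z_1)v+z_1P^-_0(n)v=(L-z_1)P^-_0(n)v=0\,,
\]
so $P^-_0(n)L\fH^+_2(n)=\{0\}$ and the decomposition \eqref{eq:de} collapses to $\fH^-_0(n)=\ran(L^-_0(n)-z_1)$, which is exactly the stated surjectivity.

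To obtain the commutation I would combine the two lemmas established earlier in this section. Lemma~\ref{lem:mainl}, applied with $l=2$, supplies the one-sided inclusion
\[
P^-_0(n+2)\subseteq(L-z_1)P^-_0(n)(L-z_1)^{-1}\,,
\]
which acts meaningfully on $u\in\fH_0$ since $(L-z_1)^{-1}u\in\fH_2$ and $P^-_0(n)\fH_2=\fH^-_2(n)\subseteq\fH_2=\dom L$. Lemma~\ref{lem:mn2l} asserts that $\fH^-_0(n+2)=\fH^-_0(n)$, so the two orthogonal projections $P^-_0(n+2)$ and $P^-_0(n)$ in $\fH_0$ have identical ranges and therefore coincide. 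The inclusion above thus upgrades to an honest operator identity $P^-_0(n)=(L-z_1)P^-_0(n)(L-z_1)^{-1}$ on $\fH_0$; substituting $u=(L-z_1)v$ with $v\in\fH_2$ yields the desired commutation $P^-_0(n)(L-z_1)v=(L-z_1)P^-_0(n)v$.

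The only real obstacle is this upgrade from inclusion to equality, but the argument is short: both $P^-_0(n+2)$ and $P^-_0(n)$ are orthogonal projections defined everywhere on $\fH_0$ with a common range (supplied by Lemma~\ref{lem:mn2l}), hence they are equal as operators on $\fH_0$. Everything else is formal, and an entirely parallel argument will yield surjectivity of $L^+_0(n)-z_1$ on $\fH^+_0(n)$, since the hypothesis of Lemma~\ref{lem:mn2l} is symmetric in the two subspaces.
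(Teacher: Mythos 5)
Your proof is correct. It rests on exactly the same two lemmas as the paper's, but attacks the complementary summand of \eqref{eq:de}: you establish $P^-_0(n)L\fH^+_2(n)=\{0\}$ (this is precisely \eqref{eq:PPP}, which the paper records as an equivalent statement immediately \emph{after} the theorem and verifies by the range computation $P^-_0(n)b_1(L)\fH^+_2(n)=P^-_0(n)\fH^+_0(n+2)=P^-_0(n)\fH^+_0(n)$), whereas the paper's own proof computes the first summand directly, $\ran(L^-_0(n)-z_1)=P^-_0(n)b_1(L)\fH^-_2(n)=P^-_0(n)\fH^-_0(n+2)$, and then applies Lemma~\ref{lem:mn2l}. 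Your intermediate step---upgrading the inclusion of Lemma~\ref{lem:mainl} to the everywhere-defined operator identity $P^-_0(n)=(L-z_1)P^-_0(n)(L-z_1)^{-1}$ by noting that $P^-_0(n+2)$ and $P^-_0(n)$ are orthogonal projections with the same range---is sound, and it has the small merit of making explicit the commutation (reducing-subspace) property that the paper only assembles later in Theorem~\ref{thm:prn}. The cost is a slightly longer detour than the paper's two-line range identity; the two arguments are otherwise equivalent in content.
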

\begin{proof}
By Lemma~\ref{lem:mainl}
\[
\ran(L^-_0(n)-z_1)=P^-_0(n)b_1(L)\fH^-_2(n)=
P^-_0(n)\fH^-_0(n+2)\,.
\]
Now apply Lemma~\ref{lem:mn2l}.
\end{proof}
The statement of the theorem is therefore
equivalent to the statement
\begin{equation}
P^-_0(n)L\fH^+_2(n)=\{0\}\,.
\label{eq:PPP}
\end{equation}
Indeed, by Lemmas~\ref{lem:mainl} and \ref{lem:mn2l}
\[
P^-_0(n)L\fH^+_2(n)=P^-_0(n)b_1(L)\fH^+_2(n)
=P^-_0(n)\fH^+_0(n+2)=P^-_0(n)\fH^+_0(n)=\{0\}
\]
so the sum in \eqref{eq:de} implies that the
operator $L^-_0(n)-z_1$ is surjective, and vice verse.
In this case the operators
$L^-_0(n)$ satisfy $L^-_0=L^-_0(2n)$ and
$L^-_0(1)=L^-_0(2n+1)$.
Analogous results hold for $L^+_0(n)$
and $L^\pm_n$.

If $L^{-\,*}_n$ is the adjoint in $\fH^-_n$ of
$L^-_n$ and if $L^-_0(n)^*$ is the adjoint in
$\fH^-_0(n)$ of $L^-_0(n)$, then
\begin{lem}\label{lem:corrLn}
$L^{-\,*}_n=b_n(L)^{-1/2}L^-_0(n)^*b_n(L)^{1/2}$.
\end{lem}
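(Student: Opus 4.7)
The plan is to recognize $L^-_n$ as the unitary conjugate of $L^-_0(n)$ and then invoke the standard fact that Hilbert-space adjoints commute with unitary conjugation. By the very definition of the $\fH_n$-scalar product, $U_n:=b_n(L)^{1/2}$ is a unitary operator from $\fH_n$ onto $\fH_0$, and the first lemma of Section~4 already tells us that its restriction $V_n:=U_n\vrt_{\fH^-_n}$ is a unitary from $\fH^-_n$ onto $\fH^-_0(n)=b_n(L)^{1/2}\fH^-_n$. From $P^-_0(n)=U_nP^-_nU_n^{-1}$ (the definition of $P^-_0(n)$) and from $\fH^-_{n,2}=b_n(L)^{-1/2}\fH^-_2(n)$ (stated just after that lemma), $V_n$ also maps the domain $\fH^-_{n,2}=\dom L^-_n$ bijectively onto the domain $\fH^-_2(n)=\dom L^-_0(n)$.

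Next I would verify the intertwining $V_nL^-_n=L^-_0(n)V_n$ on $\fH^-_{n,2}$. For $f\in\fH^-_{n,2}$, using \eqref{eq:Ln1} together with $U_nP^-_n=P^-_0(n)U_n$, one computes
\[
V_nL^-_nf=U_nP^-_nb_n(L)^{-1/2}Lb_n(L)^{1/2}f=P^-_0(n)LU_nf=L^-_0(n)V_nf,
\]
so that $L^-_n=V_n^{-1}L^-_0(n)V_n$ as operators with matching domains.

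Finally, I would invoke the elementary fact that, for a unitary $V\co\cH_1\lto\cH_2$ between Hilbert spaces and a densely defined operator $S$ in $\cH_2$, the operator $V^{-1}SV$ is densely defined in $\cH_1$ and $(V^{-1}SV)^*=V^{-1}S^*V$. The required density of $\dom L^-_0(n)=\fH^-_2(n)$ in $\fH^-_0(n)$ is supplied by Lemma~\ref{lem:mainl}. Applying this with $V=V_n$ and $S=L^-_0(n)$ yields
\[
L^{-\,*}_n=V_n^{-1}L^-_0(n)^*V_n=b_n(L)^{-1/2}L^-_0(n)^*b_n(L)^{1/2},
\]
which is the stated identity. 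The only care required is the domain bookkeeping just performed; there is no genuine obstacle here, since the whole argument rests on a single unitary equivalence between $L^-_n$ and $L^-_0(n)$.
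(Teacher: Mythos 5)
Your proposal is correct and is essentially the paper's own argument: the paper's one-line proof refers back to the computation establishing $P^-_0(n)^*=b_n(L)^{1/2}P^{-\,*}_nb_n(L)^{-1/2}$, which is exactly the substitution $u=b_n(L)^{1/2}f$ exploiting the unitarity of $b_n(L)^{1/2}$ that underlies your ``adjoints commute with unitary conjugation'' step. You merely package the same mechanism more explicitly, via the intertwining $V_nL^-_n=L^-_0(n)V_n$ and the domain bookkeeping, with the needed density supplied by Lemma~\ref{lem:mainl}.
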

\begin{proof}
The basic arguments are as in the proof of \eqref{eq:p1}.
\end{proof}
\begin{thm}\label{thm:L-0n*}
Under hypothesis of Lemma~\ref{lem:mn2l}
the operator $L^-_0(n)$, $n\in\bbZ$, is self-adjoint in
$\fH^-_0(n)$.
\end{thm}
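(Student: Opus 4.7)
My plan is to combine the symmetry of $L^-_0(n)$ as an operator in $\fH^-_0(n)$ with the surjectivity conclusion of Theorem~\ref{thm:surj}, and then invoke the classical criterion that a symmetric operator $T$ in a Hilbert space with $\ran(T-\lambda)$ equal to the whole space for some real $\lambda$ is necessarily self-adjoint.

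First I would verify that $L^-_0(n)$ is symmetric on its domain $\fH^-_2(n)\subseteq\fH^-_0(n)$. For any $u,v\in\fH^-_2(n)$ one has $u,v\in\fH_2=\dom L$ together with $P^-_0(n)u=u$ and $P^-_0(n)v=v$; self-adjointness of $P^-_0(n)$ in $\fH_0$ and of $L$ on $\fH_2$ then give
\[
\braket{L^-_0(n)u,v}_0=\braket{P^-_0(n)Lu,v}_0=\braket{Lu,v}_0=\braket{u,Lv}_0=\braket{u,L^-_0(n)v}_0\,.
\]

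Second I would invoke Theorem~\ref{thm:surj}, which under the present hypothesis yields $\ran(L^-_0(n)-z_1)=\fH^-_0(n)$, and apply the self-adjointness criterion as follows. Pick $f\in\dom L^-_0(n)^*$. Since $(L^-_0(n)^*-z_1)f\in\fH^-_0(n)$, surjectivity provides $g\in\fH^-_2(n)$ with $(L^-_0(n)-z_1)g=(L^-_0(n)^*-z_1)f$; using $L^-_0(n)\subseteq L^-_0(n)^*$ this gives $f-g\in\ker(L^-_0(n)^*-z_1)$. For every $h\in\fH^-_2(n)$, the reality of $z_1$ implies
\[
\braket{(L^-_0(n)-z_1)h,f-g}_0=\braket{h,(L^-_0(n)^*-z_1)(f-g)}_0=0\,,
\]
and since $(L^-_0(n)-z_1)h$ ranges over all of $\fH^-_0(n)$ we conclude $f-g=0$, hence $f=g\in\dom L^-_0(n)$ and $L^-_0(n)=L^-_0(n)^*$.

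No serious obstacle is anticipated: the main subtlety is purely bookkeeping—verifying that the range space in Theorem~\ref{thm:surj}, the domain $\fH^-_2(n)$, and the self-adjointness of $P^-_0(n)$ are all consistently referenced to the ambient Hilbert space $\fH^-_0(n)$. Via Lemma~\ref{lem:corrLn} the self-adjointness then transfers to $L^-_n$ in $\fH^-_n$, and an analogous argument (using the $+$-version of Theorem~\ref{thm:surj}, which follows from the symmetric role of $P^+_n$ in the hypothesis of Lemma~\ref{lem:mn2l}) handles $L^+_0(n)$.
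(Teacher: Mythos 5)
Your proposal is correct, and it takes a genuinely different route from the paper. The paper proves Theorem~\ref{thm:L-0n*} by computing the adjoint $L^-_0(n)^*$ explicitly as a linear relation: it parametrizes test vectors as $w^-=P^-_0(n)b_1(L)^{-1}v$, pushes the scalar products back to $\fH_0$ via the scaling $b_1(L)^{\pm1}$, and then uses Lemmas~\ref{lem:mainl} and \ref{lem:mn2l} to force $y^-\in\fH^-_2(n)$ and $x^-=L^-_0(n)y^-$. You instead verify symmetry directly (which is fine: for $u,v\in\fH^-_2(n)=\fH^-_0(n)\mcap\fH_2$ the self-adjointness of $P^-_0(n)$ in $\fH_0$ and of $L$ reduce everything to $\braket{Lu,v}_0=\braket{u,Lv}_0$) and then invoke the classical criterion that a densely defined symmetric operator with $\ran(T-z_1)$ equal to the whole space at a real point $z_1$ is self-adjoint, the surjectivity being exactly Theorem~\ref{thm:surj}. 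There is no circularity, since Theorem~\ref{thm:surj} is proved before Theorem~\ref{thm:L-0n*} and depends only on Lemmas~\ref{lem:mainl} and \ref{lem:mn2l}. The one bookkeeping point you should make explicit is that $L^-_0(n)$ is densely defined in $\fH^-_0(n)$ (this is Lemma~\ref{lem:mainl} again, via the density of $\fH^-_2(n)$ in $\fH^-_0(n)$), so that $L^-_0(n)^*$ is single-valued and your step ``$(L^-_0(n)^*-z_1)f$'' is unambiguous. What each approach buys: yours is shorter and more elementary for this particular statement; the paper's explicit computation of the adjoint relation is not wasted effort, because it is reused almost verbatim as the template for the harder identification $A^{-\,*}_{\min}=A^-_{\max}$ in the min-max section, where the perturbing component $\bbC^{md}$ prevents a simple range argument.
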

\begin{proof}
Consider the adjoint $L^-_0(n)^*$ as a linear relation
in $\fH^-_0(n)$. Then $L^-_0(n)^*$ consists of
$(y^-,x^-)\in\fH^-_0(n)^2$ such that
$(\forall w^-\in\fH^-_2(n))$
\[
\braket{w^-,x^-}_0=\braket{L^-_0(n)w^-,y^-}_0\,.
\]
Every $w^-\in\fH^-_2(n)$
is of the form $w^-=P^-_0(n)b_1(L)^{-1}v$ with some
$v\in\fH_0$. Then
\begin{align*}
\braket{L^-_0(n)w^-,y^-}_0=&
\braket{LP^-_0(n)b_1(L)^{-1}v,y^-}_0=
\braket{b_1(L)P^-_0(n)b_1(L)^{-1}v,y^-}_0
\\
&+\braket{P^-_0(n)b_1(L)^{-1}v,z_1y^-}_0
\\
=&\braket{b_1(L)P^-_0(n)b_1(L)^{-1}v,y^-}_0
+\braket{v,b_1(L)^{-1}z_1y^-}_0\,.
\end{align*}
By applying Lemma~\ref{lem:mainl}
\[
\braket{b_1(L)P^-_0(n)b_1(L)^{-1}v,y^-}_0=
\braket{P^-_0(n+2)v,y^-}_0=\braket{v,P^-_0(n+2)y^-}_0\,.
\]
On the other hand
\[
\braket{w^-,x^-}_0=
\braket{b_1(L)^{-1}v,x^-}_0=
\braket{v,b_1(L)^{-1}x^-}_0\,.
\]
Therefore $(y^-,x^-)\in\fH^-_0(n)^2$ such that
\[
b_1(L)^{-1}x^-=P^-_0(n+2)y^-+b_1(L)^{-1}z_1y^-\,.
\]
Because $y^-=u^-+u^+$ is the sum of disjoint elements
$u^\pm\in P^\pm_0(n+2)\fH^-_0(n)$ it follows from the above
that
\[
b_1(L)^{-1}x^-=u^-+b_1(L)^{-1}z_1(u^-+u^+)\,.
\]
Because $b_1(L)^{-1}\fH^-_0(n)=\fH^-_2(n-2)$ by
Lemma~\ref{lem:mainl},
from here one concludes that
\[
u^-\in\fH^-_2(n-2)\mcap P^-_0(n+2)\fH^-_0(n)=
\fH^-_2(n-2)\mcap\fH^-_2(n)\mcap\fH^-_2(n+2)\,.
\]
Sequentially
\[
x^-=b_1(L)u^-+z_1(u^-+u^+)=
P^-_0(n)b_1(L)u^-+z_1(u^-+u^+)=
L^-_0(n)u^-+z_1u^+\,.
\]
Finally,
by applying Lemma~\ref{lem:mn2l} one gets that
$u^-\in\fH^-_2(n)$ and $u^+=0$.
\end{proof}
\begin{cor}
$z_1\in\res L^-_0(n)$.
\end{cor}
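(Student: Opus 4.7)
The plan is to deduce the corollary as a quick consequence of the two preceding results, Theorem~\ref{thm:surj} and Theorem~\ref{thm:L-0n*}, applied under the standing hypothesis of Lemma~\ref{lem:mn2l} (which is evidently still in force here, since both cited theorems require it). The claim is that the real number $z_1$ lies in the resolvent set of the operator $L^-_0(n)$ acting in the Hilbert space $\fH^-_0(n)$.

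First, I would recall that by Theorem~\ref{thm:L-0n*} the operator $L^-_0(n)$ is self-adjoint in $\fH^-_0(n)$; in particular it is closed and, since $z_1\in\bbR$, the orthogonality relation $\ran(L^-_0(n)-z_1)^{\bot_{\fH^-_0(n)}}=\ker(L^-_0(n)-z_1)$ holds. Next, by Theorem~\ref{thm:surj} the operator $L^-_0(n)-z_1$ is surjective onto $\fH^-_0(n)$. Combining these two facts yields $\ker(L^-_0(n)-z_1)=\{0\}$, so that $L^-_0(n)-z_1$ is in fact a bijection from $\dom L^-_0(n)=\fH^-_2(n)$ onto $\fH^-_0(n)$.

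Finally, since $L^-_0(n)-z_1$ is closed and bijective, the closed graph theorem gives the boundedness of its inverse $(L^-_0(n)-z_1)^{-1}$ on $\fH^-_0(n)$, which by definition means $z_1\in\res L^-_0(n)$.

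There is really no obstacle here; the corollary is a packaging of Theorems~\ref{thm:surj} and \ref{thm:L-0n*}. The only minor point worth spelling out explicitly in the write-up is the passage from surjectivity to injectivity via self-adjointness and the reality of $z_1$, since this is what makes the two theorems combine into membership in the resolvent set rather than merely some weaker spectral statement.
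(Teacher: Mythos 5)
Your proof is correct and follows essentially the same route as the paper, which simply cites Theorems~\ref{thm:surj} and \ref{thm:L-0n*}; you have merely spelled out the standard argument (self-adjointness plus reality of $z_1$ turns surjectivity into bijectivity, and the closed graph theorem gives bounded inverse) that the paper leaves implicit.
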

\begin{proof}
This follows from Theorems~\ref{thm:surj}
and \ref{thm:L-0n*}.
\end{proof}
Under hypothesis of Lemma~\ref{lem:mn2l}
and applying Lemma~\ref{lem:corrLn},
the operator $L^-_n$ is therefore self-adjoint in $\fH^-_n$.
Moreover, $z_1\in\res L^-_n=\res L^-_0(n)$ or, what is
equivalent, $P^-_nL_n\fH^+_{n+2}=\{0\}$.
Similar conclusions apply to operators $L^+_0(n)$
and $L^+_n$.
\begin{lem}\label{lem:resLm0}
Under hypothesis of Lemma~\ref{lem:mn2l}
the resolvent
\[
(L^-_0(n)-z)^{-1}=P^-_0(n)(L-z)^{-1}P^-_0(n)\quad
\text{on}\quad\fH^-_0(n)
\]
for $z\in\res L\subseteq\res L^-_0(n)$
(and similarly for $L^+_0(n)$).
\end{lem}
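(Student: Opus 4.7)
The approach is to recognize that under the hypothesis of Lemma~\ref{lem:mn2l} the orthogonal decomposition $\fH_0=\fH^-_0(n)\op\fH^+_0(n)$ actually reduces the operator $L$, so the resolvent of $L^-_0(n)$ is just the compression of $(L-z)^{-1}$ to $\fH^-_0(n)$. Since $L^-_0(n)$ is already known to be self-adjoint in $\fH^-_0(n)$ by Theorem~\ref{thm:L-0n*}, verifying the claimed formula as a two-sided inverse of $L^-_0(n)-z$ will simultaneously establish $\res L\subseteq\res L^-_0(n)$ and the explicit resolvent identity.

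The two ingredients needed are \eqref{eq:PPP}, i.e.\ $P^-_0(n)L\fH^+_2(n)=\{0\}$ (equivalently $P^-_0(n)L=P^-_0(n)LP^-_0(n)$ on $\fH_2$), together with its $+$-counterpart $P^+_0(n)L\fH^-_2(n)=\{0\}$, which is the ``Similar conclusions apply'' remark announced in the paragraph just preceding the lemma. The latter says that $Lw^-\in\fH^-_0(n)$ for every $w^-\in\fH^-_2(n)$, so that $L^-_0(n)w^-=P^-_0(n)Lw^-=Lw^-$; in other words, the projection appearing in the definition of $L^-_0(n)$ is inactive on its domain.

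Fixing $z\in\res L$ and setting $R(z):=P^-_0(n)(L-z)^{-1}P^-_0(n)\vrt_{\fH^-_0(n)}$, the inclusion $\ran R(z)\subseteq P^-_0(n)\fH_2=\fH^-_2(n)=\dom L^-_0(n)$ is immediate. A direct computation of $(L^-_0(n)-z)R(z)y^-$, using $P^-_0(n)L(L-z)^{-1}y^-=P^-_0(n)LP^-_0(n)(L-z)^{-1}y^-$ granted by \eqref{eq:PPP}, collapses to $P^-_0(n)(L-z)(L-z)^{-1}y^-=P^-_0(n)y^-=y^-$. Conversely, for $w^-\in\fH^-_2(n)$, substituting $L^-_0(n)w^-=Lw^-$ yields $R(z)(L^-_0(n)-z)w^-=P^-_0(n)(L-z)^{-1}(L-z)w^-=P^-_0(n)w^-=w^-$. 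Hence $R(z)=(L^-_0(n)-z)^{-1}$ and $z\in\res L^-_0(n)$. The only bookkeeping question is keeping straight which of \eqref{eq:PPP} or its $+$-counterpart is invoked at each step; no deeper difficulty arises, since the hypothesis of Lemma~\ref{lem:mn2l} already furnishes both.
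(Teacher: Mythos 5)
Your proof is correct, and it reaches the conclusion by a somewhat more direct route than the paper. The paper splits the argument in two: it first projects the identity $v=(L-z)u$, $u\in\fH_2$, onto $\fH^-_0(n)$ and uses \eqref{eq:PPP} to obtain $P^-_0(n)v=(L^-_0(n)-z)P^-_0(n)u$, which gives the formula only for $z\in\res L\mcap\res L^-_0(n)$; it then separately proves the inclusion $\res L\subseteq\res L^-_0(n)$ by identifying $\fN_z(L^-_0(n))=\fH^-_2(n)\mcap\fN_z(L)$ (so that $\sigma_p(L^-_0(n))\subseteq\sigma_p(L)$) and by decomposing $\ran(L-z)=\ran(L^-_0(n)-z)\dsum\ran(L^+_0(n)-z)$ to get surjectivity. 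You instead verify directly that $R(z)=P^-_0(n)(L-z)^{-1}P^-_0(n)\vrt_{\fH^-_0(n)}$ is a bounded two-sided inverse of $L^-_0(n)-z$, which delivers the spectral inclusion and the resolvent formula simultaneously: your left-inverse computation is essentially the paper's projection step, while your right-inverse computation (which correctly invokes the $+$-counterpart $P^+_0(n)L\fH^-_2(n)=\{0\}$ to justify $L^-_0(n)w^-=Lw^-$ on the domain) replaces the paper's point-spectrum analysis. Both arguments rest on the same key ingredient \eqref{eq:PPP}; yours is slightly more economical, whereas the paper's version has the side benefit of exhibiting the eigenspaces $\fN_z(L^-_0(n))=\fH^-_2(n)\mcap\fN_z(L)$ explicitly.
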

\begin{proof}
First we derive the resolvent formula for
$z\in\res L\mcap\res L^-_0(n)$ and then we show
that $\res L\subseteq\res L^-_0(n)$.
Consider an arbitrary $v\in\fH_0$. Then,
for $z\in\res L$, $(\exists u\in\fH_2)$ $v=(L-z)u$.
Projecting the latter onto $\fH^-_0(n)$ and
applying \eqref{eq:PPP} yields
\[
P^-_0(n)v=(L^-_0(n)-z)P^-_0(n)u
\]
and the resolvent formula follows for
$z\in\res L\mcap\res L^-_0(n)$.

The eigenspace
\[
\fN_z(L^-_0(n))=\{u^-\in\fH^-_2(n)\vrt
P^-_0(n)(L-z)u^-=0\}
\]
is nontrivial for some $z\in\bbR$
(\cf Theorem~\ref{thm:L-0n*}).
From here and \eqref{eq:PPP} one gets that
\[
(L-z)u^-=P^+_0(n)(L-z)u^-=0\,;
\]
hence
\[
\fN_z(L^-_0(n))=\fH^-_2(n)\mcap\fN_z(L)\,.
\]
If $z\notin\sigma_p(L)$ then also
$z\notin\sigma_p(L^-_0(n))$, but the converse
$z\notin\sigma_p(L^-_0(n))$ implies only that
$\fN_z(L)=\fH^+_2(n)\mcap\fN_z(L)$ in this case.
Therefore $\sigma_p(L^-_0(n))\subseteq\sigma_p(L)$.

Now let $z\in\res L$; that is, $z\notin\sigma_p(L)$
and $\ran(L-z)=\fH_0$. Because by \eqref{eq:PPP}
\[
\ran(L-z)=\ran(L^-_0(n)-z)\dsum\ran(L^+_0(n)-z)
\]
it follows that
\[
\ran(L^-_0(n)-z)=\fH^-_0(n)\,,\quad
\ran(L^+_0(n)-z)=\fH^+_0(n)
\]
so $z\in\res L^-_0(n)$.
\end{proof}
Under the same hypothesis the resolvent of $L^-_n$ is given by
\[
(L^-_n-z)^{-1}=P^-_n(L_n-z)^{-1}P^-_n\quad
\text{on}\quad\fH^-_n
\]
for $z\in\res L_n=\res L$ (and similarly for $L^+_n$).

We summarize the main results obtained so far
in the following theorem.
\begin{thm}\label{thm:prn}
Let $\fH_{n+1}\subseteq\fH_n$ be the scale of Hilbert
spaces associated with a self-adjoint operator $L$ in
$\fH_0$. For each $n\in\bbZ$, let $P^-_n$ be an
orthogonal projection in $\fH_n$ onto a subspace
$\fH^-_n\subseteq\fH_n$; $\fH^+_n$ is the orthogonal
complement in $\fH_n$ of $\fH^-_n$. Assume that
$P^-_{n+1}\subseteq P^-_n$. Then the projections
$(P^-_n)_{n\in\bbZ}$ are characterized, by scaling,
by any two adjacent projections, say $P^-_0$ and $P^-_1$,
according to
\[
P^-_{2n}=b_n(L)^{-1}P^-_0b_n(L)\,,\quad
P^-_{2n+1}=b_n(L)^{-1}P^-_1b_n(L)\,.
\]
For each $n$, the subspace $\fH^-_n$ (resp. $\fH^+_n$)
is therefore a reducing subspace for the restriction $L_n$
to $\fH_{n+2}$ of $L$. The part of $L_n$ in $\fH^-_n$
(resp. $\fH^+_n$) is a self-adjoint operator.
\end{thm}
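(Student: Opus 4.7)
The plan is to derive the three conclusions of Theorem~\ref{thm:prn} from the machinery already built, principally Lemmas~\ref{lem:mainl} and \ref{lem:mn2l}, Lemma~\ref{lem:corrLn}, Theorem~\ref{thm:L-0n*}, and the identity~\eqref{eq:PPP}, without redoing any of that work from scratch.

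First I would deduce the scaling formulas for the $P^-_n$. Lemma~\ref{lem:mn2l} yields $\fH^-_0(n)=\fH^-_0$ for $n$ even and $\fH^-_0(n)=\fH^-_0(1)$ for $n$ odd; passing to the associated orthogonal projections gives $P^-_0(2n)=P^-_0$ and $P^-_0(2n+1)=P^-_0(1)$. Substituting the definition $P^-_0(k)=b_k(L)^{1/2}P^-_kb_k(L)^{-1/2}$ and using $b_{2n}(L)^{1/2}=(L-z_1)^n=b_n(L)$, the even identity rearranges to $P^-_{2n}=b_n(L)^{-1}P^-_0b_n(L)$. For the odd case I would write $b_{2n+1}(L)^{1/2}=b_n(L)\,b_1(L)^{1/2}$ and use that $b_n(L)$ commutes with every Borel function of $L$; this yields $P^-_{2n+1}=b_n(L)^{-1}P^-_1b_n(L)$.

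Next I would show that $\fH^\pm_n$ reduce $L_n$. Invariance of $\dom L_n=\fH_{n+2}$ under $P^-_n$ is immediate from the hypothesis $P^-_{n+2}\subseteq P^-_n$: for $f\in\fH_{n+2}$ one has $P^-_nf=P^-_{n+2}f\in\fH^-_{n+2}\subseteq\fH_{n+2}$, and analogously for $P^+_n$. For the commutation $L_nP^\pm_n=P^\pm_nL_n$ on $\dom L_n$, I would transfer the identity~\eqref{eq:PPP} from the $\fH_0$-representation to the $\fH_n$-representation by conjugation with $b_n(L)^{-1/2}$ along the lines of Lemma~\ref{lem:corrLn}; this yields $P^-_nL_n\fH^+_{n+2}=\{0\}$ and symmetrically $P^+_nL_n\fH^-_{n+2}=\{0\}$. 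Given any $f\in\fH_{n+2}$, decompose $f=f^-+f^+$ with $f^\pm\in\fH^\pm_{n+2}$ (which is legitimate precisely because $P^-_{n+2}\subseteq P^-_n$, so that the $P^-_n$- and $P^-_{n+2}$-decompositions coincide). Then $P^-_nL_nf=P^-_nL_nf^-=L_nf^-=L_nP^-_nf$, and likewise with $\pm$ swapped.

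The third claim, self-adjointness of the part of $L_n$ in $\fH^\pm_n$, is then immediate: the reduction just established identifies this part with $L^\pm_n=P^\pm_nL_n\vrt_{\fH^\pm_{n,2}}$, which by Lemma~\ref{lem:corrLn} is unitarily equivalent via $b_n(L)^{-1/2}$ to $L^\pm_0(n)$, and the latter is self-adjoint in $\fH^\pm_0(n)$ by Theorem~\ref{thm:L-0n*}. The main obstacle I expect is the commutation step: one must carefully ensure the decomposition $f=f^-+f^+$ genuinely lives in $\fH^\pm_{n+2}$ (not merely in $\fH^\pm_n$), a point that uses the full strength of $P^-_{n+2}\subseteq P^-_n$ rather than only the density assertions of Lemma~\ref{lem:mainl}, and one must verify that~\eqref{eq:PPP}, which is phrased in $\fH_0$, transfers cleanly to the analogous identity in $\fH_n$ under the scaling isomorphism.
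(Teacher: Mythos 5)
Your proposal is correct and follows essentially the same route as the paper, whose proof is simply a citation of Lemmas~\ref{lem:mn2l} and \ref{lem:corrLn} and Theorem~\ref{thm:L-0n*}; you fill in exactly that chain (Lemma~\ref{lem:mn2l} for the scaling formulas, the $\fH_n$-transfer of \eqref{eq:PPP} for the reduction, and Lemma~\ref{lem:corrLn} with Theorem~\ref{thm:L-0n*} for self-adjointness of the parts). The only addition is your explicit verification of the commutation $L_nP^\pm_n=P^\pm_nL_n$ on $\fH_{n+2}$, which the paper leaves implicit in its remark that $P^-_nL_n\fH^+_{n+2}=\{0\}$ following Lemma~\ref{lem:resLm0}.
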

\begin{proof}
This follows from Lemmas~\ref{lem:mn2l},
\ref{lem:corrLn}, and Theorem~\ref{thm:L-0n*}.
\end{proof}
%%%%%%%%%%%%%%%%%%%%%%%%%%%%%%%%%%%%%%%%%%%%%%%%%%%%%%%%%%%%%%
%%%%%%%%%%%%%%%%%%%%%%%%%%%%%%%%%%%%%%%%%%%%%%%%%%%%%%%%%%%%%%
%%%%%%%%%%%%%%%%%%%%%%%%%%%%%%%%%%%%%%%%%%%%%%%%%%%%%%%%%%%%%%
%%%%%%%%%%%%%%%%%%%%%%%%%%%%%%%%%%%%%%%%%%%%%%%%%%%%%%%%%%%%%%
\section{Min-Max operators in a subspace}
%%%%%%%%%%%%%%%%%%%%%%%%%%%%%%%%%%%%%%%%%%%%%%%%%%%%%%%%%%%%%%
%%%%%%%%%%%%%%%%%%%%%%%%%%%%%%%%%%%%%%%%%%%%%%%%%%%%%%%%%%%%%%
%%%%%%%%%%%%%%%%%%%%%%%%%%%%%%%%%%%%%%%%%%%%%%%%%%%%%%%%%%%%%%
%%%%%%%%%%%%%%%%%%%%%%%%%%%%%%%%%%%%%%%%%%%%%%%%%%%%%%%%%%%%%%
In the present and subsequent paragraphs
$\fM^*_d\cG_{\mrm{A}}=\cG_{\mrm{A}}\fM_d$,
as in \eqref{eq:hypho2}, for an invertible
Hermitian $\cG_{\mrm{A}}$,
and $P^-_{n+1}\subseteq P^-_n$, $n\in\bbZ$,
as in Theorem~\ref{thm:prn}.
Let
\begin{align*}
A^\prime_{\min}:=&U_{\mrm{A}}A_{\min}U^{-1}_{\mrm{A}}
\\
=&\{\bigl((f^{\#},\xi),(Lf^\#,\fM_d\xi)\bigr)\vrt
f^{\#}\in\fH_{m+2}\,;\,
\xi\in\bbC^{md}\,;\,
\braket{\vp,f^\#}=[\cG_{\mrm{A}}\xi]_m\}\,.
\end{align*}
Then $A^\prime_{\min}$ is a closed, densely defined,
symmetric operator in $\cH^{\prime}_{\mrm{A}}$, whose
adjoint $A^{\prime\,*}_{\min}$ is given by
\begin{align*}
A^\prime_{\max}:=&A^{\prime\,*}_{\min}=
U_{\mrm{A}}A_{\max}U^{-1}_{\mrm{A}}
\\
=&\{\bigl((f^{\#}+h_{m+1}(c),\xi),
(Lf^\#+z_1h_{m+1}(c),\fM_d\xi+\eta(c))\bigr)\vrt
f^{\#}\in\fH_{m+2}\,;\,c\in\bbC^d\,;
\\
&\xi\in\bbC^{md}\}\,.
\end{align*}
If $(\bbC^d,\Gamma^{\mrm{A}}_0,\Gamma^{\mrm{A}}_1)$
is an OBT for $A_{\max}$ then the triple
$(\bbC^d,\Gamma^{\prime\,\mrm{A}}_0,\Gamma^{\prime\,\mrm{A}}_1)$,
with $\Gamma^{\prime\,\mrm{A}}_i:=\Gamma^{\mrm{A}}_iU^{-1}_{\mrm{A}}$,
$i\in\{0,1\}$, is an OBT for $A^\prime_{\max}$.

Let
\[
\Pi^\pm:=P^\pm_m\op I_{\bbC^{md}}\quad\text{in}\quad
\fH_m\op\bbC^{md}\,.
\]
Then $\Pi^-$ (resp. $\Pi^+$) is an orthogonal
(with respect to the $\fH_m\op\bbC^{md}$-metric)
projection onto a subspace
$\fH^-_m\op\bbC^{md}$ (resp. $\fH^+_m\op\bbC^{md}$).
Note that
\[
\Pi^-\Pi^+\neq0\,,\quad\Pi^+\Pi^-\neq0\,,\quad
\Pi^-+\Pi^+\neq I_{\fH_m\op\bbC^{md}}\,.
\]
However, given $\Pi^-$, the above inequalities
become the equalities with $\Pi^+$ replaced by
the orthogonal projection
$\Pi^{\prime\,+}:=I_{\fH_m\op\bbC^{md}}-\Pi^-$ onto
\[
(\fH^-_m\op\bbC^{md})^{\bot_{\fH_m\op\bbC^{md}}}
=\fH^+_m\op\{0\}\,.
\]
Likewise, given $\Pi^+$, the above inequalities
become the equalities with $\Pi^-$ replaced by
the orthogonal projection
$\Pi^{\prime\,-}:=I_{\fH_m\op\bbC^{md}}-\Pi^+$ onto
\[
(\fH^+_m\op\bbC^{md})^{\bot_{\fH_m\op\bbC^{md}}}
=\fH^-_m\op\{0\}\,.
\]

By Theorem~\ref{thm:prn}, $A^\prime_{\min}$ maps
\[
\dom A^\prime_{\min}\mcap(\fH^-_m\op\bbC^{md})=
\Pi^-\dom A^\prime_{\min}
\]
into $\fH^-_m\op\bbC^{md}$; therefore
$\fH^-_m\op\bbC^{md}$ is an invariant
(\cite[Definition~1.7]{Schmudgen12})
subspace for $A^\prime_{\min}$. Let $A^-_{\min}$ denote
the part of $A^\prime_{\min}$ in $\fH^-_m\op\bbC^{md}$,
that is
\begin{align*}
A^-_{\min}:=&
A^\prime_{\min}\vrt_{\Pi^-\dom A^\prime_{\min}}
=\Pi^-A^\prime_{\min}\vrt_{\Pi^-\dom A^\prime_{\min}}
\\
=&\{\bigl((f^{\#\,-},\xi),(L^-_mf^{\#\,-},\fM_d\xi)\bigr)\vrt
f^{\#\,-}\in\fH^-_{m+2}\,;\,
\xi\in\bbC^{md}\,;\,
\braket{\vp,f^{\#\,-}}=[\cG_{\mrm{A}}\xi]_m\}\,.
\end{align*}
Similarly one defines the part $A^+_{\min}$ of
$A^\prime_{\min}$ in $\fH^+_m\op\bbC^{md}$.
Because $\fH^+_m\op\{0\}$ (resp. $\fH^-_m\op\{0\}$)
is also an invariant subspace for $A^\prime_{\min}$,
the operator $A^\prime_{\min}$ is
represented by the orthogonal sum of its part
$A^-_{\min}$ in $\fH^-_m\op\bbC^{md}$
(resp. $A^+_{\min}$ in $\fH^+_m\op\bbC^{md}$)
and its part $L^+_{\min}\op0$ in
$\fH^+_m\op\{0\}$ (resp. $L^-_{\min}\op0$
in $\fH^-_m\op\{0\}$), where the operator
\[
L^+_{\min}:=
L^+_m\vrt_{\{f^{+}\in\fH^+_{m+2}\vrt
\braket{\vp,f^{+}}=0\}}\quad
(\text{resp.
$L^-_{\min}:=
L^-_m\vrt_{\{f^{-}\in\fH^-_{m+2}\vrt
\braket{\vp,f^{-}}=0\}}$})\,;
\]
symbolically ($[\op]$ indicates both
$\fH_m\op\bbC^{md}$-orthogonal and
$\cH^\prime_{\mrm{A}}$-orthogonal sum)
\begin{equation}
A^\prime_{\min}=A^-_{\min}[\op]
(L^+_{\min}\op0)
=
(L^-_{\min}\op0)[\op]
A^+_{\min}\,.
\label{eq:Amindec}
\end{equation}

Let $\vp^-$ (resp. $\vp^+$)
denote the vector valued functional
whose components $\vp^-_\sigma$ (resp. $\vp^+_\sigma$)
are defined by
\begin{align*}
&\vp^-_\sigma:=b_{m+2}(L)^{1/2}P^-_0(m)b_{m+2}(L)^{-1/2}
\vp_\sigma\in b_{m+2}(L)^{1/2}(\fH^-_0\setm\fH^-_1)
\\
&(\text{resp.
$\vp^+_\sigma:=b_{m+2}(L)^{1/2}P^+_0(m)b_{m+2}(L)^{-1/2}
\vp_\sigma\in b_{m+2}(L)^{1/2}(\fH^+_0\setm\fH^+_1)$})\,.
\end{align*}
The duality pairing $\braket{\vp^-_\sigma,\cdot}$
(resp. $\braket{\vp^+_\sigma,\cdot}$)
is defined via the $\fH_0$-scalar product in a usual way.
$\braket{\vp^-,\cdot}=(\braket{\vp^-_\sigma,\cdot})
\co\fH^-_{m+2}\lto\bbC^d$
denotes the action of the vector valued functional $\vp^-$,
and similarly for $\vp^+$.
\begin{lem}\label{lem:spir}
For $f^{\#\,-}\in\fH^-_{m+2}$
\[
\braket{\vp,f^{\#\,-}}=
\braket{\vp^-,f^{\#\,-}}=
\braket{h^-_{m+1},(L^-_m-z_1)f^{\#\,-}}_m
\]
and similarly for the action of $\vp$ on $\fH^+_{m+2}$.
\end{lem}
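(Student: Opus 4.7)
The plan is to push everything down to the reference Hilbert space $\fH_0$ via the unitary isomorphism $b_n(L)^{1/2}\co\fH_n\to\fH_0$ and then exploit the identification $\fH^-_0(m+2)=\fH^-_0(m)$ supplied by Lemma~\ref{lem:mn2l}. Setting $\psi_\sigma:=b_{m+2}(L)^{-1/2}\vp_\sigma\in\fH_0$, the usual definition of the duality pairing via the $\fH_0$-scalar product gives
\[
\braket{\vp_\sigma,f^{\#\,-}}=\braket{\psi_\sigma,b_{m+2}(L)^{1/2}f^{\#\,-}}_0.
\]
Decomposing $\psi_\sigma=P^-_0(m)\psi_\sigma+P^+_0(m)\psi_\sigma$, the first equality of the lemma reduces to the $P^+_0(m)$-term dropping out. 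Since $f^{\#\,-}\in\fH^-_{m+2}$ yields $b_{m+2}(L)^{1/2}f^{\#\,-}\in\fH^-_0(m+2)$, and by Lemma~\ref{lem:mn2l} (in force under the standing hypothesis $P^-_{n+1}\subseteq P^-_n$) $\fH^-_0(m+2)=\fH^-_0(m)\perp_{\fH_0}\fH^+_0(m)\ni P^+_0(m)\psi_\sigma$, the orthogonality kills that contribution, leaving $\braket{\vp_\sigma,f^{\#\,-}}=\braket{P^-_0(m)\psi_\sigma,b_{m+2}(L)^{1/2}f^{\#\,-}}_0=\braket{\vp^-_\sigma,f^{\#\,-}}$.

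For the second equality, the plan is to rewrite $\psi_\sigma$ in terms of $h_{\sigma,m+1}$. From $h_{\sigma,m+1}=b_{m+1}(L)^{-1}\vp_\sigma=b_m(L)^{-1/2}\psi_\sigma$ one gets $\psi_\sigma=b_m(L)^{1/2}h_{\sigma,m+1}$, and the intertwining $P^-_0(m)=b_m(L)^{1/2}P^-_m b_m(L)^{-1/2}$ then yields $P^-_0(m)\psi_\sigma=b_m(L)^{1/2}h^-_{\sigma,m+1}$ with $h^-_{\sigma,m+1}:=P^-_m h_{\sigma,m+1}$. Combined with the factorization $b_{m+2}(L)^{1/2}=b_m(L)^{1/2}(L-z_1)$ and the definition of $\braket{\cdot,\cdot}_m$, the inner product above becomes $\braket{h^-_{\sigma,m+1},(L-z_1)f^{\#\,-}}_m$. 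Finally, since $h^-_{\sigma,m+1}\in\fH^-_m$ and $P^-_m$ is self-adjoint in $\fH_m$, I insert $P^-_m$ in the second slot; by Theorem~\ref{thm:prn} the subspace $\fH^-_m$ reduces $L_m$, so $P^-_m(L-z_1)f^{\#\,-}=(L^-_m-z_1)f^{\#\,-}$ on $\fH^-_{m+2}$, completing the chain.

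The only step that is not routine bookkeeping with the scale is the orthogonality argument in the first paragraph, and it rests entirely on Lemma~\ref{lem:mn2l}: without the identification $\fH^-_0(m+2)=\fH^-_0(m)$ the $P^+_0(m)$-component of $\psi_\sigma$ would generically survive and the equality $\braket{\vp,f^{\#\,-}}=\braket{\vp^-,f^{\#\,-}}$ would fail. The $\fH^+_{m+2}$ analogue follows by swapping $+$ and $-$ throughout, invoking $\fH^+_0(m+2)=\fH^+_0(m)$ from the same lemma.
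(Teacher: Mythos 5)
Your proof is correct and follows essentially the same route as the paper: both arguments hinge on pushing everything to $\fH_0$ via $b_{m+2}(L)^{1/2}$, invoking Lemma~\ref{lem:mn2l} to identify $\fH^-_0(m+2)=\fH^-_0(m)$ so that $P^-_0(m)$ acts as the identity on $b_{m+2}(L)^{1/2}f^{\#\,-}$ (the paper moves the projection across the inner product by self-adjointness, you equivalently drop the $P^+_0(m)$-component of $\psi_\sigma$ by orthogonality), and then rewriting $P^-_0(m)b_{m+2}(L)^{-1/2}\vp_\sigma=b_m(L)^{1/2}P^-_mh_{\sigma,m+1}$ together with $b_{m+2}(L)^{1/2}=b_m(L)^{1/2}(L-z_1)$ for the second equality. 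The differences are purely cosmetic.
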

\begin{proof}
By the definition of the duality pairing and that of
$\vp^-_\sigma$
\begin{align*}
\braket{\vp^-_\sigma,f^{\#\,-}}=&
\braket{b_{m+2}(L)^{-1/2}\vp^-_\sigma,
b_{m+2}(L)^{1/2}f^{\#\,-}}_0
\\
=&\braket{P^-_0(m)b_{m+2}(L)^{-1/2}\vp_\sigma,
b_{m+2}(L)^{1/2}f^{\#\,-}}_0
\\
=&\braket{b_{m+2}(L)^{-1/2}\vp_\sigma,
P^-_0(m)b_{m+2}(L)^{1/2}f^{\#\,-}}_0\,.
\end{align*}
But
\[
b_{m+2}(L)^{1/2}f^{\#\,-}\in
b_{m+2}(L)^{1/2}\fH^-_{m+2}=
b_{m+2}(L)^{1/2}P^-_{m+2}\fH_{m+2}=
\fH^-_0(m+2)
\]
and hence by Lemma~\ref{lem:mn2l}
$b_{m+2}(L)^{1/2}f^{\#\,-}\in\fH^-_0(m)$;
therefore
\begin{align*}
\braket{b_{m+2}(L)^{-1/2}\vp_\sigma,
P^-_0(m)b_{m+2}(L)^{1/2}f^{\#\,-}}_0=&
\braket{b_{m+2}(L)^{-1/2}\vp_\sigma,
b_{m+2}(L)^{1/2}f^{\#\,-}}_0
\\
=&\braket{\vp_\sigma,f^{\#\,-}}\,.
\end{align*}
This proves the first equality. Using that
$b_{m+2}(L)^{1/2}f^{\#\,-}\in\fH^-_0(m)$, the second
equality is due to
\begin{align*}
\braket{h^-_{\sigma,m+1},(L^-_m-z_1)f^{\#\,-}}_m=&
\braket{h^-_{\sigma,m+1},b_1(L)f^{\#\,-}}_m
\\
=&\braket{b_m(L)^{1/2}P^-_mh_{\sigma,m+1},
b_{m+2}(L)^{1/2}f^{\#\,-}}_0
\\
=&\braket{P^-_0(m)b_{m+2}(L)^{-1/2}\vp_\sigma,
b_{m+2}(L)^{1/2}f^{\#\,-}}_0
\\
=&\braket{b_{m+2}(L)^{-1/2}\vp_\sigma,
b_{m+2}(L)^{1/2}f^{\#\,-}}_0=
\braket{\vp_\sigma,f^{\#\,-}}\,.
\end{align*}
The proof of $\braket{\vp,\cdot}$ on $\fH^+_{m+2}$
is analogous.
\end{proof}
By the lemma the boundary conditions defining the operators
$L^\pm_{\min}$ are therefore reduced to $\braket{\vp^\pm,f^\pm}=0$,
$f^\pm\in\fH_{m+2}$, where $\vp^-+\vp^+=\vp$. Explicitly
\[
L^-_{\min}:=
L^-_m\vrt_{\{f^{-}\in\fH^-_{m+2}\vrt
\braket{\vp^-,f^{-}}=0\}}\,,\quad
L^+_{\min}:=
L^+_m\vrt_{\{f^{+}\in\fH^+_{m+2}\vrt
\braket{\vp^+,f^{+}}=0\}}\,.
\]

Just like the functionals $\vp_\sigma$ define
the elements $h_{\sigma j}:=b_j(L)^{-1}\vp_\sigma$,
$j\in J$, that generate the linear space $\fK_{\mrm{A}}$,
the functionals $\vp^\pm_\sigma$ define the elements
\begin{equation}
h^\pm_{\sigma j}:=b_j(L)^{-1}\vp^\pm_\sigma=
P^\pm_{-m-2+2j}h_{\sigma j}
\label{eq:KApm}
\end{equation}
that generate (span) the linear subspaces
$\fK^\pm_{\mrm{A}}$ of $\fK_{\mrm{A}}$; that is,
$\fK_{\mrm{A}}=\fK^-_{\mrm{A}}\dsum \fK^+_{\mrm{A}}$.
The proof of the second equality in \eqref{eq:KApm}
uses the definition of $P^\pm_0(\cdot)$
and then Lemma~\ref{lem:mn2l}, in the same spirit
as in the proof of Lemma~\ref{lem:spir}.

Unlike the case of $A^\prime_{\min}$, the operator
$A^\prime_{\max}$ does not commute with the projection
$\Pi^-$ (resp. $\Pi^+$). The reason is that
now the projection of
$h_{m+1}(c)$ onto $\fH^-_m$ affects the value
of the extra term $\eta(c)\in\bbC^{md}$. This seems to
be better seen in the representation of the operator
$A^\prime_{\max}$
in the space $\fH_m\dsum\fK_{\mrm{A}}$, \ie in analyzing
the operator $A_{\max}$. Thus we have by Lemma~\ref{lem:1}
(here $k\in\fK_{\mrm{A}}$)
\[
A_{\max}(f^{\#}+h_{m+1}(c)+k)=L_{m-2}(f^{\#}+h_{m+1}(c))
+k^\prime\,,
\]
\[
k^\prime\in\fK_{\mrm{A}}\,,\quad
d(k^\prime)=\fM_dd(k)
\]
and
\[
L_{m-2}h_{m+1}(c)=z_1h_{m+1}(c)+h_m(c)
\]
where
\[
h_m(c)=b_1(L)h_{m+1}(c)=
\sum_{\alpha}
[\eta(c)]_\alpha h_\alpha=\sum_\sigma
c_\sigma h_{\sigma m}\in\fK_{\mrm{A}}\,.
\]
Now projecting $f^{\#}+h_{m+1}(c)+k$ onto
$\fH^-_m\dsum\fK_{\mrm{A}}$ one gets that
\begin{align*}
A_{\max}U^{-1}_{\mrm{A}}\Pi^-U_{\mrm{A}}
(f^{\#}+h_{m+1}(c)+k)=&
L^-_{m-2}(f^{\#\,-}+h^-_{m+1}(c))
+k^\prime
\\
=&L^-_mf^{\#\,-}+z_1h^-_{m+1}(c)+k^\prime+h^-_m(c)
\end{align*}
with
\[
h^-_m(c):=b_1(L)h^-_{m+1}(c)=
\sum_{\alpha}
[\eta(c)]_\alpha h^-_\alpha=
\sum_\sigma c_\sigma h^-_{\sigma m}\in\fK^-_{\mrm{A}}
\]
(it is precisely for this reason why $\eta(c)$ changes to
$\eta^-(c)\neq\eta(c)$; see below),
while
\begin{align*}
U^{-1}_{\mrm{A}}\Pi^-U_{\mrm{A}}A_{\max}
(f^{\#}+h_{m+1}(c)+k)=&
L^-_mf^{\#\,-}+z_1h^-_{m+1}(c)+k^\prime+h_m(c)
\\
=&A_{\max}U^{-1}_{\mrm{A}}\Pi^-U_{\mrm{A}}
(f^{\#}+h_{m+1}(c)+k)+h^+_m(c)
\end{align*}
with $h^+_m(c)\in\fK^+_{\mrm{A}}$ defined similarly
as $h^-_m(c)$. Because $h^\pm_m(c)\in\fK^\pm_{\mrm{A}}$
and $\fK^\pm_{\mrm{A}}\subseteq\fK_{\mrm{A}}$, it follows that
\[
h^\pm_m(c)=\sum_{\alpha}
[\eta(c)]_\alpha h^\pm_\alpha=
\sum_{\alpha}
[\eta^\pm(c)]_\alpha h_\alpha
\]
for $\eta^\pm(c)\in\bbC^{md}$ given by
\[
\eta^\pm(c)
:=\wtcG^{-1}_{\mrm{A}}\braket{h,h^\pm_m(c)}_{-m}
=\wtcG^{-1}_{\mrm{A}}\wtcG^\pm_{\mrm{A}}\eta(c)
\]
with the matrix
\[
\wtcG^\pm_{\mrm{A}}=
([\wtcG^\pm_{\mrm{A}}]_{\alpha\alpha^\prime})\,,
\quad
[\wtcG^\pm_{\mrm{A}}]_{\alpha\alpha^\prime}:=
\braket{h_\alpha,h^\pm_{\alpha^\prime}}_{-m}\,.
\]
With this notation, and going back to the representation
of $A_{\max}$ in $\fH_m\op\bbC^{md}$, one gets that
\[
A^\prime_{\max}\Pi^-(f^\#+h_{m+1}(c),\xi)=
(L^-_mf^{\#\,-}+z_1h^-_{m+1}(c),\fM_d\xi+
\eta^-(c))
\]
while
\[
\Pi^-A^\prime_{\max}(f^\#+h_{m+1}(c),\xi)=
(L^-_mf^{\#\,-}+z_1h^-_{m+1}(c),\fM_d\xi+\eta(c))\,.
\]
Similarly,
projecting $(f^{\#}+h_{m+1}(c),\xi)$ onto
$\fH^+_m\op\{0\}$ gives
\[
A^\prime_{\max}\Pi^{\prime\,+}(f^\#+h_{m+1}(c),\xi)=
(L^+_mf^{\#\,+}+z_1h^+_{m+1}(c),\eta^+(c))
\]
while
\[
\Pi^{\prime\,+}A^\prime_{\max}(f^\#+h_{m+1}(c),\xi)=
(L^+_mf^{\#\,+}+z_1h^+_{m+1}(c),0)\,.
\]
From these formulas one observes that
one still is able to represent the extension of the operator
$A^\prime_{\max}$ (but not the operator $A^\prime_{\max}$
itself) as the orthogonal sum of its parts
in subspaces $\fH^-_m\op\bbC^{md}$
(resp. $\fH^+_m\op\bbC^{md}$) and $\fH^+_m\op\{0\}$
(resp. $\fH^-_m\op\{0\}$),
similarly as in \eqref{eq:Amindec}, by
moving an element $(0,\eta^+(c))$ from
$A^\prime_{\max}\Pi^{\prime\,+}$ to
$A^\prime_{\max}\Pi^{-}$.

To make this precise, one
therefore introduces the linear relation
\begin{align*}
A^-_{\max}:=&
\{\bigl((f^{\#\,-}+h^-_{m+1}(c),\xi),
(L^-_mf^{\#\,-}+z_1h^-_{m+1}(c),\fM_d\xi+\eta(c))\bigr)\vrt
f^{\#\,-}\in\fH^-_{m+2}\,;
\\
&c\in\bbC^d\,;\,\xi\in\bbC^{md}\}
\end{align*}
in $\fH^-_m\op\bbC^{md}$ with the multivalued part
\[
\mul A^-_{\max}=\{0\}\times\eta^+(\Sigma^-)\,,\quad
\Sigma^-:=\{c\in\bbC^d\vrt\sum_\sigma c_\sigma
\vp^-_\sigma=0\}
\]
(the multivalued part is exactly the
orthogonal complement in $\cH^{\prime\,-}_{\mrm{A}}$
of $\dom A^-_{\min}$)
and the operator
\[
L^-_{\max}\op0=\Pi^{\prime\,-}A^\prime_{\max}
\vrt_{\Pi^{\prime\,-}\dom A^\prime_{\max}}
\]
in $\fH^-_m\op\{0\}$ with
\begin{align*}
L^-_{\max}:=&
\{(f^{\#\,-}+h^-_{m+1}(c),
L^-_mf^{\#\,-}+z_1h^-_{m+1}(c))\vrt
f^{\#\,-}\in\fH^-_{m+2}\,;\,c\in\bbC^d\}\,.
\end{align*}
Analogously one defines the linear relation $A^+_{\max}$ in
$\fH^+_m\op\bbC^{md}$, with the multivalued part
$\{0\}\times\eta^-(\Sigma^+)$,
and the operator $L^+_{\max}$ in $\fH^+_m$.
Note that the domain of the operator $L^-_{\max}$ in $\fH^-_m$
can be also written thus
\[
\dom L^-_{\max}=\fH^-_{m+2}\dsum\fN_z(L^-_{\max})\,,
\quad z\in\res L^-_m
\]
with the eigenspace
\[
\fN_z(L^-_{\max})=(L^-_m-z_1)(L^-_m-z)^{-1}
h^-_{m+1}(\bbC^d)
\]
and similarly for $L^+_{\max}$.
(The operators $L^\pm_{\max}$ should not be confused
with the triplet adjoint $L_{\max}$; as we show below,
$L^-_{\max}$ is the adjoint in $\fH^-_{m}$ of
$L^-_{\min}$, and similarly for $L^+_{\max}$.)

It follows from the above constructions that
the orthogonal (both in $\fH_m\op\bbC^{md}$-metric
and in $\cH^{\prime}_{\mrm{A}}$-metric)
componentwise sum of linear relations
(\cf \cite{Hassi12,Hassi09,Hassi07} for the notation)
\begin{equation}
A^-_{\max}[\hop](L^+_{\max}\op0)=
(L^-_{\max}\op0)[\hop]A^+_{\max}
\label{eq:Amaxdec}
\end{equation}
is an extension in $\fH_m\op\bbC^{md}$ of the
operator $A^\prime_{\max}$.
By comparing \eqref{eq:Amindec} with
\eqref{eq:Amaxdec} one concludes that
$A^-_{\min}\subseteq A^-_{\max}$
and $L^-_{\min}\subseteq L^-_{\max}$,
and similarly for $A^+_{\min}$ and $L^+_{\min}$.
In fact, one can say more.
\begin{thm}
The linear relation
$A^-_{\max}=A^{-\,*}_{\min}$ is the adjoint in
$\cH^{\prime\,-}_{\mrm{A}}$ of a nondensely defined
(in general), closed, symmetric operator $A^-_{\min}$.
\end{thm}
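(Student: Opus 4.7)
The plan is to inherit closedness and symmetry of $A^-_{\min}$ from those of $A^\prime_{\min}$ via the invariant subspace $\fH^-_m\op\bbC^{md}$, to establish $A^-_{\max}=A^{-\,*}_{\min}$ by a two-sided inclusion argument, and to read off non-density from the general identity $\mul T^*=(\dom T)^\bot$ taken with respect to the indefinite metric. By Theorem~\ref{thm:mn}, $A_{\min}$ is closed and symmetric in $\cH_{\mrm{A}}$, so its unitary image $A^\prime_{\min}=U_{\mrm{A}}A_{\min}U^{-1}_{\mrm{A}}$ shares these properties in $\cH^\prime_{\mrm{A}}$. Theorem~\ref{thm:prn} says $\fH^-_{m+2}$ reduces $L_m$ to the self-adjoint $L^-_m$ on $\fH^-_m$, which together with the trivial invariance of $\bbC^{md}$ under $\fM_d$ makes $\fH^-_m\op\bbC^{md}$ invariant for $A^\prime_{\min}$; since $\Pi^-$ acts as the identity on the $\bbC^{md}$-factor it commutes with $[\cdot,\cdot]^\prime_{\mrm{A}}$, so the part $A^-_{\min}$ is closed and symmetric in $\cH^{\prime\,-}_{\mrm{A}}$.

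For the easy inclusion $A^-_{\max}\subseteq A^{-\,*}_{\min}$, I would expand the boundary form $[g,h]^\prime_{\mrm{A}}-[f,k]^\prime_{\mrm{A}}$ for typical $(f,g)\in A^-_{\min}$ and $(h,k)\in A^-_{\max}$. The $\fH^-_m$-contribution splits into a self-adjointness term for $L^-_m$ (which vanishes by Theorem~\ref{thm:prn}) plus a cross-term $\braket{(L^-_m-z_1)f^{\#\,-},h^-_{m+1}(c)}_m$ which, by Lemma~\ref{lem:spir}, becomes a pairing involving $\braket{\vp,f^{\#\,-}}$. The $\bbC^{md}$-contribution yields a commutator killed by~\eqref{eq:hypho2} together with a residual $\braket{\xi,\cG_{\mrm{A}}\eta(c)}_{\bbC^{md}}$ that, via $\eta(c)=(\delta_{jm}c_\sigma)$ and the boundary condition $\braket{\vp,f^{\#\,-}}=[\cG_{\mrm{A}}\xi]_m$, cancels the cross-term exactly.

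The reverse inclusion $A^{-\,*}_{\min}\subseteq A^-_{\max}$ is the main obstacle. Given $((y_1,\omega_1),(y_2,\omega_2))\in A^{-\,*}_{\min}$, testing with $\xi=0$ and $f^{\#\,-}\in\dom L^-_{\min}$ isolates the $\fH^-_m$-component and forces $(y_1,y_2)\in L^{-\,*}_{\min}$. Invoking the forthcoming identification $L^{-\,*}_{\min}=L^-_{\max}$ together with $\dom L^-_{\max}=\fH^-_{m+2}\dsum h^-_{m+1}(\bbC^d)$ (the $z=z_1$ instance of the decomposition in the paragraph preceding the theorem, valid since $z_1\in\res L^-_m$ by the corollary to Theorem~\ref{thm:L-0n*} and $\fN_{z_1}(L^-_{\max})=h^-_{m+1}(\bbC^d)$), decompose $y_1=f^{\#\,-}_1+h^-_{m+1}(c_1)$, $y_2=L^-_mf^{\#\,-}_1+z_1h^-_{m+1}(c_1)$. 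Testing next with arbitrary $\xi\in\bbC^{md}$ and boundary-compatible $f^{\#\,-}\in\fH^-_{m+2}$ (feasible by surjectivity of $\braket{\vp^-,\cdot}\co\fH^-_{m+2}\to(\Sigma^-)^\bot$, a consequence of Lemma~\ref{lem:spir}) and cancelling the $L^-_m$ self-adjoint piece, the adjoint identity reduces to $\braket{\xi,\cG_{\mrm{A}}(\omega_2-\fM_d\omega_1-\eta(c_1))}_{\bbC^{md}}=0$ for every $\xi$ with $[\cG_{\mrm{A}}\xi]_m\in(\Sigma^-)^\bot$. Invertibility of $\cG_{\mrm{A}}$ places the residue $\omega_2-\fM_d\omega_1-\eta(c_1)$ in $\eta(\Sigma^-)$; because $h^-_{m+1}(c^*)=0=h^-_m(c^*)$ for $c^*\in\Sigma^-$, the decomposition of $y_1,y_2$ is unaffected by the shift $c_1\mapsto c_1+c^*$, and the same identity yields $\eta^+(c^*)=\eta(c^*)$ via the definition of $\eta^+$, so the triple lands in $A^-_{\max}$.

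Non-density is then immediate from $(\dom A^-_{\min})^\bot=\mul A^{-\,*}_{\min}=\mul A^-_{\max}=\{0\}\times\eta^+(\Sigma^-)$ (taken in the $[\cdot,\cdot]^\prime_{\mrm{A}}$-orthogonal sense), which is nonzero whenever $\Sigma^-\neq\{0\}$---the generic situation when the restrictions $\{\vp^-_\sigma\}$ fail to be linearly independent. The principal subtlety, hinted at in the previous paragraph, is the coupling between $\fH^-_m$ and $\bbC^{md}$ mediated by $\vp^-$ via Lemma~\ref{lem:spir}: one must verify that the ambiguity in the $L^-_{\max}$-decomposition of $y_1$ matches exactly $\eta^+(\Sigma^-)=\eta(\Sigma^-)$, and this coherence is ultimately enforced by~\eqref{eq:hypho2} together with invertibility of $\cG_{\mrm{A}}$.
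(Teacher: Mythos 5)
Your overall architecture is sound and in places cleaner than the paper's: you inherit closedness and symmetry of $A^-_{\min}$ from the closed graph of $A^\prime_{\min}$ intersected with $(\fH^-_m\op\bbC^{md})^2$ (the paper instead verifies closedness by computing $A^{-\,*}_{\max}$ explicitly), and your handling of the ambiguity $c_1\mapsto c_1+c^*$, $c^*\in\Sigma^-$, together with $\eta^+(c^*)=\eta(c^*)$ and the identification of $\mul A^-_{\max}$ with $(\dom A^-_{\min})^{[\bot]}$, is correct and matches the paper's description of the multivalued part.

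The genuine gap is the step ``invoking the forthcoming identification $L^{-\,*}_{\min}=L^-_{\max}$.'' In the paper this identification is a \emph{corollary} read off from the proof of the very theorem you are proving, so as written your reverse inclusion is circular: testing with $\xi=0$ only gives $(y_1,y_2)\in L^{-\,*}_{\min}$ as a relation in $\fH^-_m$, and without an independent proof that $L^{-\,*}_{\min}=L^-_{\max}$ you cannot conclude the decomposition $y_1=f^{\#\,-}_1+h^-_{m+1}(c_1)$ on which everything downstream depends. The paper avoids this by never passing through $L^{-\,*}_{\min}$: it computes $A^{-\,*}_{\min}$ directly in the scaled $\fH_0$-representation, reformulating the boundary condition as the identity \eqref{eq:fm} and matching the adjoint relation against it term by term. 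Your route is salvageable, but you must first prove $L^{-\,*}_{\min}=L^-_{\max}$ independently: since $L^-_{\min}$ is a restriction of the self-adjoint $L^-_m$ of finite codimension and, by Lemma~\ref{lem:spir}, $\braket{\vp^-,f^-}=\braket{h^-_{m+1},(L^-_m-z_1)f^-}_m$ with $L^-_m-z_1$ surjective onto $\fH^-_m$, one gets $\ran(L^-_{\min}-z_1)^{\bot_{\fH^-_m}}=h^-_{m+1}(\bbC^d)$ and hence $\dom L^{-\,*}_{\min}=\fH^-_{m+2}\dsum h^-_{m+1}(\bbC^d)=\dom L^-_{\max}$ with matching action. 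Insert that computation before the reverse inclusion and your argument closes; as it stands, it does not.
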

\begin{proof}
The main arguments are as in the proof of the
self-adjointness of $L^-_m$ (Theorem~\ref{thm:L-0n*})
by using in addition that the boundary condition
for $(f^{\#\,-},\xi)\in\dom A^-_{\min}$ implies that
$(\forall c\in\bbC^d)$
\begin{equation}
\braket{w,b_m(L)^{1/2}h^-_{m+1}(c)}_0=
\braket{\xi,\cG_{\mrm{A}}\eta(c)}_{\bbC^{md}}\,,
\quad
f^{\#\,-}=b_{m+2}(L)^{-1/2}P^-_0(m)w\,,
\label{eq:fm}
\end{equation}
$w\in\fH_0$;
note that
\[
b_m(L)^{1/2}h^-_{m+1}(c)=b_{m+2}(L)^{-1/2}\sum_\sigma
c_\sigma\vp^-_\sigma
\]
and the representation of $f^{\#\,-}$ is shown in
the proof of Lemma~\ref{lem:spir}.
The duality pairing then reads
\[
\braket{\vp^-,f^{\#\,-}}=
\braket{b_{m+2}(L)^{-1/2}\vp^-,b_{m+2}(L)^{1/2}f^{\#\,-}}_0
=\braket{b_{m+2}(L)^{-1/2}\vp^-,P^-_0(m)w}_0\,;
\]
but $b_{m+2}(L)^{-1/2}\vp^-\in\fH^-_0(m)$, so
the boundary condition reads
\[
\braket{\vp^-,f^{\#\,-}}=
\braket{b_{m+2}(L)^{-1/2}\vp^-,w}_0=[\cG_{\mrm{A}}\xi]_m
\]
from which \eqref{eq:fm} follows.

Now one computes $A^{-\,*}_{\min}$;
as a linear relation, it is the set of
$((y^-,\xi_y),(x^-,\xi_x))\in(\fH^-_m\op\bbC^{md})^2$
such that $(\forall(f^{\#\,-},\xi)\in\dom A^-_{\min})$
\begin{equation}
\braket{f^{\#\,-},x^-}_m+\braket{\xi,\cG_{\mrm{A}}\xi_x}_{\bbC^{md}}
=\braket{L^-_mf^{\#\,-},y^-}_m+
\braket{\fM_d\xi,\cG_{\mrm{A}}\xi_y}_{\bbC^{md}}\,.
\label{eq:ckdje}
\end{equation}
Applying the representation
\begin{align*}
x^-=&b_m(L)^{-1/2}u^-\,,\quad u^-\in\fH^-_0(m)\,,
\\
y^-=&b_m(L)^{-1/2}v^-\,,\quad v^-\in\fH^-_0(m)
\end{align*}
and using that $b_1(L)^{-1}\fH^-_0(m)=\fH^-_2(m)$
one gets that
\[
\braket{f^{\#\,-},x^-}_m=
\braket{w,b_1(L)^{-1}u^-}_0
\]
and
\begin{align*}
\braket{L^-_mf^{\#\,-},y^-}_m=&
\braket{b_1(L)f^{\#\,-},y^-}_m+
\braket{f^{\#\,-},z_1y^-}_m
\\
=&
\braket{w,v^-}_0+\braket{w,b_1(L)^{-1}z_1v^-}_0\,.
\end{align*}
Therefore \eqref{eq:ckdje} reads
\[
\braket{w,v^--b_1(L)^{-1}(u^--z_1v^-)}_0=
\braket{\xi,\cG_{\mrm{A}}(\xi_x-\fM_d\xi_y)}_{\bbC^{md}}\,.
\]
Comparing the latter with \eqref{eq:fm} yields
\begin{align*}
v^--b_1(L)^{-1}(u^--z_1v^-)=&b_m(L)^{1/2}h^-_{m+1}(c)\,,
\\
\xi_x=&\fM_d\xi_y+\eta(c)\,.
\end{align*}
The first equation above implies that
\[
v^--b_m(L)^{1/2}h^-_{m+1}(c)\in
b_1(L)^{-1}\fH^-_0(m)=\fH^-_2(m)
\]
that is
\[
y^-=f^-+h^-_{m+1}(c)\,,\quad f^-\in\fH^-_{m+2}\,.
\]
Then
\[
x^-=z_1y^-+b_1(L)f^-=L^-_mf^-+z_1h^-_{m+1}(c)\,.
\]
This proves $A^{-\,*}_{\min}=A^-_{\max}$.
It remains to verify that $A^-_{\min}$ is closed.
The adjoint $A^{-\,*}_{\max}$ consists of
$((y^-,\xi_y),(x^-,\xi_x))\in(\fH^-_m\op\bbC^{md})^2$
such that $(\forall f^{\#\,-}\in\fH^-_{m+2})$
$(\forall c\in\bbC^d)$ $(\forall\xi\in\bbC^{md})$
\begin{align*}
\braket{f^{\#\,-}+h^-_{m+1}(c),x^-}_m+
\braket{\xi,\cG_{\mrm{A}}\xi_x}_{\bbC^{md}}
=&\braket{L^-_mf^{\#\,-}+z_1h^-_{m+1}(c),y^-}_m
\\
+&
\braket{\fM_d\xi+\eta(c),\cG_{\mrm{A}}\xi_y}_{\bbC^{md}}\,.
\end{align*}
Using the representation of $f^{\#\,-}$, $x^-$, $y^-$
as above, and noting that
\[
\braket{h^-_{m+1}(c),x^-}_m=
\braket{c,\braket{h^-_{m+1},x^-}_m}_{\bbC^d}\,,
\quad
\braket{\eta(c),\cG_{\mrm{A}}\xi_y}_{\bbC^{md}}
=\braket{c,[\cG_{\mrm{A}}\xi_y]_m}_{\bbC^d}
\]
one gets that
\begin{align*}
0=&
\braket{w,v^--b_1(L)^{-1}(u^--z_1v^-)}_0
+
\braket{c,\braket{h^-_{m+1},z_1y^--x^-}_m+
[\cG_{\mrm{A}}\xi_y]_m}_{\bbC^d}
\\
&+
\braket{\xi,\cG_{\mrm{A}}(\fM_d\xi_y-\xi_x)}_{\bbC^{md}}
\end{align*}
and from which one concludes that
\[
v^-=b_1(L)^{-1}(u^--z_1v^-)\in\fH^-_2(m)\Rightarrow
x^-=L^-_my^-\,,\quad y^-\in\fH^-_{m+2}
\]
and
\[
\braket{h^-_{m+1},x^--z_1y^-}_m=
\braket{h^-_{m+1},(L^-_m-z_1)y^-}_m=
\braket{\vp,y^-}=[\cG_{\mrm{A}}\xi_y]_m
\]
(\cf Lemma~\ref{lem:spir}) and
$\xi_x=\fM_d\xi_y$. Thus $A^-_{\min}$ is closed,
and this completes the proof.
\end{proof}
The above proof also shows that:
\begin{cor}
The operator $L^-_{\max}=L^{-\,*}_{\min}$
is the adjoint in $\fH^-_m$ of a densely defined,
closed, symmetric operator $L^-_{\min}$.
\end{cor}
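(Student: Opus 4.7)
The plan is to specialize the adjoint computation from the proof of the preceding theorem to the invariant slice $\xi=0$ (equivalently, to the subspace $\fH^-_m\op\{0\}$): in lieu of pairs $(f^{\#\,-},\xi)\in\dom A^-_{\min}$ satisfying the linked condition $\braket{\vp^-,f^{\#\,-}}=[\cG_{\mrm{A}}\xi]_m$, one works directly with $f^-\in\fH^-_{m+2}$ subject to $\braket{\vp^-,f^-}=0$. Symmetry of $L^-_{\min}$ is immediate from the self-adjointness of $L^-_m$ in $\fH^-_m$ (Theorem~\ref{thm:prn}), and closedness follows since $L^-_{\min}$ is the restriction of the closed operator $L^-_m$ to the kernel of the $\bbC^d$-valued functional $\braket{\vp^-,\cdot}$, which is continuous in the graph norm of $L^-_m$. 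For density, I would argue that the representatives $b_{m+2}(L)^{-1/2}\vp^-_\sigma=P^-_0(m)b_{m+2}(L)^{-1/2}\vp_\sigma$ lie in $\fH^-_0\setm\fH^-_1$ by construction, so $\braket{\vp^-_\sigma,\cdot}$ does not extend to a bounded functional on $\fH^-_m$; this is the analog, for $L^-_m$, of the classical class condition $\vp_\sigma\in\fH_{-m-2}\setm\fH_{-m-1}$ that ensures density of $\dom L_{\min}$ in $\fH_m$, and the same standard argument delivers $\dom L^-_{\min}$ dense in $\fH^-_m$.

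For the adjoint computation, a pair $(y^-,x^-)\in(\fH^-_m)^2$ lies in $L^{-\,*}_{\min}$ iff $\braket{f^-,x^-}_m=\braket{L^-_mf^-,y^-}_m$ for every $f^-\in\dom L^-_{\min}$. Using the same parametrizations as in the preceding proof --- $x^-=b_m(L)^{-1/2}u^-$, $y^-=b_m(L)^{-1/2}v^-$ with $u^-,v^-\in\fH^-_0(m)$, and $f^-=b_{m+2}(L)^{-1/2}P^-_0(m)w$ with $w\in\fH_0$ --- the boundary condition $\braket{\vp^-,f^-}=0$ becomes $\braket{b_{m+2}(L)^{-1/2}\vp^-,w}_0=0$, i.e., the $\xi=0$ specialization of \eqref{eq:fm}. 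Running through the identical chain of manipulations, the adjoint condition reduces to
\[
\braket{w,\,v^--b_1(L)^{-1}(u^--z_1v^-)}_0=0
\]
for every such $w$, and hence
\[
v^--b_1(L)^{-1}(u^--z_1v^-)=b_m(L)^{1/2}h^-_{m+1}(c)
\]
for some $c\in\bbC^d$. Employing $b_1(L)^{-1}\fH^-_0(m)=\fH^-_2(m)$ together with Lemma~\ref{lem:spir} to decode this identity then yields $y^-=f^-+h^-_{m+1}(c)$ with $f^-\in\fH^-_{m+2}$ and $x^-=L^-_mf^-+z_1h^-_{m+1}(c)$, which is exactly an element of $L^-_{\max}$. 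The reverse inclusion $L^-_{\max}\subseteq L^{-\,*}_{\min}$ is a direct computation using the Green identity for $L^-_m$ and the boundary condition $\braket{\vp^-,f^-}=0$ on $\dom L^-_{\min}$.

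The main obstacle will be the careful bookkeeping of the projection and scaling identities in translating the boundary condition between the $\fH^-_m$-level and the $\fH_0$-level; however, these are precisely the manipulations already performed in the preceding proof, so the argument essentially reduces to a faithful transcription with $\xi$ suppressed throughout. As a built-in consistency check, the resulting adjoint $L^-_{\max}$ is single-valued (this uses $z_1\in\res L^-_m$, established after Theorem~\ref{thm:L-0n*}), which a posteriori recovers the density of $\dom L^-_{\min}$ in $\fH^-_m$ from the absence of a multivalued part.
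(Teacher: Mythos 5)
Your proposal is correct and follows exactly the route the paper intends: the paper's entire proof of this corollary is the remark ``The above proof also shows that,'' i.e., the specialization of the adjoint computation for $A^-_{\min}$ to the component $\fH^-_m\op\{0\}$ (your slice $\xi=0$), which is precisely what you carry out. Your closing observation that density of $\dom L^-_{\min}$ is recovered from the single-valuedness of the computed adjoint is also the mechanism implicit in the paper's argument.
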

From here one concludes that $L^-_{\min}$
(resp. $L^+_{\min}$) is an essentially self-adjoint
operator in $\fH^-_0$ (resp. $\fH^+_0$).
Since $A^-_{\min}$ extends $L^-_{\min}$ to
$\cH^{\prime\,-}_{\mrm{A}}$ just like $A_{\min}$
extends $L_{\min}$ to $\cH_{\mrm{A}}$
it is therefore a subject of
interest to formulate a similar realization theorem in
the A-model for the symmetric operator $L^-_{\min}$.
This is done in the next (the last) paragraph.
%%%%%%%%%%%%%%%%%%%%%%%%%%%%%%%%%%%%%%%%%%%%%%%%%%%%%%%%%%%%%%
%%%%%%%%%%%%%%%%%%%%%%%%%%%%%%%%%%%%%%%%%%%%%%%%%%%%%%%%%%%%%%
%%%%%%%%%%%%%%%%%%%%%%%%%%%%%%%%%%%%%%%%%%%%%%%%%%%%%%%%%%%%%%
%%%%%%%%%%%%%%%%%%%%%%%%%%%%%%%%%%%%%%%%%%%%%%%%%%%%%%%%%%%%%%
\section{Realization theorem in a subspace}
%%%%%%%%%%%%%%%%%%%%%%%%%%%%%%%%%%%%%%%%%%%%%%%%%%%%%%%%%%%%%%
%%%%%%%%%%%%%%%%%%%%%%%%%%%%%%%%%%%%%%%%%%%%%%%%%%%%%%%%%%%%%%
%%%%%%%%%%%%%%%%%%%%%%%%%%%%%%%%%%%%%%%%%%%%%%%%%%%%%%%%%%%%%%
%%%%%%%%%%%%%%%%%%%%%%%%%%%%%%%%%%%%%%%%%%%%%%%%%%%%%%%%%%%%%%
By a straightforward computation and applying
Lemma~\ref{lem:spir}, the boundary
form of the linear relation $A^-_{\max}$ is given by
\begin{align*}
&[(f^{\#\,-}+h^-_{m+1}(c),\xi),
(L^-_mg^{\#\,-}+z_1h^-_{m+1}(c^\prime),
\fM_d\xi^\prime+\eta(c^\prime))]^\prime_{\mrm{A}}
\\
&-[(L^-_mf^{\#\,-}+z_1h^-_{m+1}(c),\fM_d\xi+\eta(c)),
(g^{\#\,-}+h^-_{m+1}(c^\prime),\xi^\prime)]^\prime_{\mrm{A}}
\\
&=\braket{c,
\braket{\vp^-,g^{\#\,-}}-[\cG_{\mrm{A}}\xi^\prime]_m
}_{\bbC^d}-
\braket{\braket{\vp^-,f^{\#\,-}}-[\cG_{\mrm{A}}\xi]_m,
c^\prime}_{\bbC^d}
\end{align*}
for $f^{\#\,-},g^{\#\,-}\in\fH^-_{m+2}$;
$c,c^\prime\in\bbC^d$; $\xi,\xi^\prime\in\bbC^{md}$.
By introducing the mappings from
$A^-_{\max}$ to $\bbC^d$ by
\begin{equation}
\Gamma^{\mrm{A}\,-}_0\whf^-:=c\,,\quad
\Gamma^{\mrm{A}\,-}_1\whf^-:=
\braket{\vp^-,f^{\#\,-}}-[\cG_{\mrm{A}}\xi]_m\,,
\label{eq:OBTAm}
\end{equation}
\[
\whf^-=\bigl((f^{\#\,-}+h^-_{m+1}(c),\xi),
(L^-_mf^{\#\,-}+z_1h^-_{m+1}(c),\fM_d\xi+\eta(c))\bigr)
\in A^-_{\max}
\]
the above boundary form simplifies thus
\[
[f^-,g^{\prime\,-}]^\prime_{\mrm{A}}-
[f^{\prime\,-},g^-]^\prime_{\mrm{A}}=
\braket{\Gamma^{\mrm{A}\,-}_0\whf^-,
\Gamma^{\mrm{A}\,-}_1\whg^-}_{\bbC^d}-
\braket{\Gamma^{\mrm{A}\,-}_1\whf^-,
\Gamma^{\mrm{A}\,-}_0\whg^-}_{\bbC^d}\,,
\]
\[
\whf^-=(f^-,f^{\prime\,-})\in A^-_{\max}\,,\quad
\whg^-=(g^-,g^{\prime\,-})\in A^-_{\max}
\]
and it therefore represents the Green identity.
Consider $\Gamma^{\mrm{A}\,-}\co\whf^-\lmap
(\Gamma^{\mrm{A}\,-}_0\whf^-,\Gamma^{\mrm{A}\,-}_1\whf^-)$
from $A^-_{\max}$ to $\bbC^d\times\bbC^d$ as an
(isometric) linear
relation from $(\cH^{\prime\,-}_{\mrm{A}})^2$ to
$\bbC^d\times\bbC^d$. Thus by definition
$\dom \Gamma^{\mrm{A}\,-}=A^-_{\max}$
and
$\ker \Gamma^{\mrm{A}\,-}=A^-_{\min}$. Moreover,
the multivalued part $\mul \Gamma^{\mrm{A}\,-}$
consists of $(c,0)$ such that
$c\in\Sigma^-\mcap\Sigma^+=\{0\}$; hence
$\Gamma^{\mrm{A}\,-}$ is an operator.
Below we show that $\Gamma^{\mrm{A}\,-}$ is
a unitary relation from $(\cH^{\prime\,-}_{\mrm{A}})^2$ to
$\bbC^d\times\bbC^d$
(by the above, it would actually suffice to show
that $\dom(\Gamma^{\mrm{A}\,-})^{[+]}=\ran\Gamma^{\mrm{A}\,-}$).
By
\cite[Corollary~2.4(iii)]{Derkach06} this would imply that
$\Gamma^{\mrm{A}\,-}$ is surjective, and that therefore
the triple $(\bbC^d,\Gamma^{\mrm{A}\,-}_0,\Gamma^{\mrm{A}\,-}_1)$
is an OBT for $A^-_{\max}$.
\begin{lem}
$(\bbC^d,\Gamma^{\mrm{A}\,-}_0,\Gamma^{\mrm{A}\,-}_1)$
is an OBT for $A^-_{\max}$.
\end{lem}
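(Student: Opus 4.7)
The Green identity for $(\bbC^d,\Gamma^{\mrm{A}\,-}_0,\Gamma^{\mrm{A}\,-}_1)$ is already worked out in the boundary-form computation preceding the lemma, and the same paragraph verifies that $\Gamma^{\mrm{A}\,-}$ is single-valued because $\mul\Gamma^{\mrm{A}\,-}\subseteq\{(c,0)\vrt c\in\Sigma^-\mcap\Sigma^+\}=\{0\}$. So the only ingredient from the OBT definition (in its linear-relation version recalled in the Preliminaries) that still needs checking is the surjectivity of $\Gamma^{\mrm{A}\,-}$ onto $\bbC^d\times\bbC^d$.

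My plan is to establish surjectivity by the shortest route, namely direct construction of preimages. Given a target pair $(a,b)\in\bbC^d\times\bbC^d$, I would take the parameters $c:=a$, $f^{\#\,-}:=0\in\fH^-_{m+2}$, and choose $\xi\in\bbC^{md}$ so that $[\cG_{\mrm{A}}\xi]_m=-b$. The existence of such $\xi$ rests only on the invertibility of $\cG_{\mrm{A}}$, a standing assumption already in force from Theorem~\ref{thm:mn}: setting $\xi:=-\cG_{\mrm{A}}^{-1}e$, where $e\in\bbC^{md}$ carries $b$ in its $m$-th $d$-block and zero in all other blocks, does the job. The corresponding $\whf^-\in A^-_{\max}$ then satisfies, by \eqref{eq:OBTAm}, $\Gamma^{\mrm{A}\,-}_0\whf^-=c=a$ and $\Gamma^{\mrm{A}\,-}_1\whf^-=\braket{\vp^-,0}-[\cG_{\mrm{A}}\xi]_m=b$, as required.

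Alternatively, one may follow the more abstract route flagged just above the lemma: the Green identity already supplies the isometric inclusion $(\Gamma^{\mrm{A}\,-})^{-1}\subseteq(\Gamma^{\mrm{A}\,-})^{[+]}$; the closedness of $\dom\Gamma^{\mrm{A}\,-}=A^-_{\max}=A^{-\,*}_{\min}$ follows from the preceding theorem; and verifying the identity $\dom(\Gamma^{\mrm{A}\,-})^{[+]}=\ran\Gamma^{\mrm{A}\,-}$ would upgrade isometricity to unitarity, after which \cite[Corollary~2.4(iii)]{Derkach06} delivers $\ran\Gamma^{\mrm{A}\,-}=\bbC^d\times\bbC^d$. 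I do not expect any genuine obstacle on either path; in both cases the crucial mechanism is simply that, as $\xi$ varies over $\bbC^{md}$, the $m$-th $d$-block of $\cG_{\mrm{A}}\xi$ sweeps out all of $\bbC^d$, which absorbs the second coordinate of the target while $c$ freely produces the first.
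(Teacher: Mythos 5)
Your proposal is correct, but your primary route to surjectivity is genuinely different from the paper's. The paper does not construct preimages at all: it computes the Krein space adjoint $(\Gamma^{\mrm{A}\,-})^{[+]}$ explicitly, splits the defining identity into three equations (one for each of the free parameters $f^{\#\,-}$, $c$, $\xi$), solves them using the self-adjointness of $L^-_m$ and Lemma~\ref{lem:spir}, and concludes $(\Gamma^{\mrm{A}\,-})^{[+]}=(\Gamma^{\mrm{A}\,-})^{-1}$, i.e.\ unitarity; surjectivity then follows from \cite[Corollary~2.4(iii)]{Derkach06} via the closedness of $\dom\Gamma^{\mrm{A}\,-}=A^-_{\max}=A^{-\,*}_{\min}$. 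This is exactly your ``alternative route,'' which you flag but do not carry out. Your main argument --- taking $f^{\#\,-}=0$, $c=a$, and $\xi=-\cG_{\mrm{A}}^{-1}e$ with $b$ placed in the $m$-th $d$-block of $e$, so that $[\cG_{\mrm{A}}\xi]_m=-b$ --- is a valid and shorter way to get surjectivity, since the resulting pair $\bigl((h^-_{m+1}(a),\xi),(z_1h^-_{m+1}(a),\fM_d\xi+\eta(a))\bigr)$ does lie in $A^-_{\max}$ and hits $(a,b)$ under \eqref{eq:OBTAm}; combined with the Green identity and single-valuedness already established before the lemma, and with the fact that $A^-_{\max}$ is the adjoint of the closed symmetric $A^-_{\min}$, this suffices for the OBT property. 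What the elementary route loses is the by-product of the paper's computation: the explicit solution of the three split equations is reused immediately afterwards to deduce that $(\bbC^d,\Gamma^-_0,\Gamma^-_1)$ of \eqref{eq:OBTAm2} is an OBT for $L^-_{\max}$ (``the above proof also shows''), a corollary your construction does not deliver for free, since surjectivity of $\Gamma^-_1$ there would require a separate argument about the functionals $\vp^-_\sigma$ rather than the freely available $\xi$-block.
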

\begin{proof}
By definition, the Krein space adjoint
$(\Gamma^{\mrm{A}\,-})^{[+]}$ is a linear relation
consisting of
\[
\left((\chi,\chi^\prime),\left((y^-,\xi_y),(x^-,\xi_x)\right)
\right)\in\bbC^{2d}\times(\fH^-_m\op\bbC^{md})^2
\]
such that
$(\forall f^{\#\,-}\in\fH^-_{m+2})$
$(\forall c\in\bbC^d)$ $(\forall\xi\in\bbC^{md})$
\begin{align*}
&\braket{f^{\#\,-}+h^-_{m+1}(c),x^-}_m+
\braket{\xi,\cG_{\mrm{A}}\xi_x}_{\bbC^{md}}
\\
&-
\braket{L^-_mf^{\#\,-}+z_1h^-_{m+1}(c),y^-}_m-
\braket{\fM_d\xi+\eta(c),\cG_{\mrm{A}}\xi_y}_{\bbC^{md}}
\\
&=\braket{c,\chi^\prime}_{\bbC^d}-
\braket{\braket{h^-_{m+1},(L^-_m-z_1)f^{\#\,-}}_m-
[\cG_{\mrm{A}}\xi]_m,\chi}_{\bbC^d}\,.
\end{align*}
The above equation splits into three equations
\[
(\forall f^{\#\,-})\;
\braket{f^{\#\,-},x^--z_1h^-_{m+1}(\chi)}_m=
\braket{L^-_mf^{\#\,-},y^--h^-_{m+1}(\chi)}_m\,,
\]
\[
(\forall c)\;0=
\braket{c,\braket{h^-_{m+1},x^--z_1y^-}_m-
[\cG_{\mrm{A}}\xi_y]_m-\chi^\prime}_{\bbC^d}\,,
\]
\[
(\forall \xi)\;0=
\braket{\xi,\cG_{\mrm{A}}(\xi_x-
\fM_d\xi_y-\eta(\chi))}_{\bbC^{md}}\,.
\]
Because $L^-_m$ is self-adjoint in $\fH^-_m$,
the first equation gives
\[
y^-=f^-+h^-_{m+1}(\chi)\,,\quad f^-\in\fH^-_{m+2}\,,
\quad
x^-=L^-_mf^-+z_1h^-_{m+1}(\chi)\,.
\]
Then the second equation yields
\[
\chi^\prime=\braket{\vp^-,f^-}-[\cG_{\mrm{A}}\xi_y]_m
\quad(\text{Lemma~\ref{lem:spir}})\,.
\]
Finally, by the third equation
\[
\xi_x=\fM_d\xi_y+\eta(\chi)\,.
\]
As a result $(\Gamma^{\mrm{A}\,-})^{[+]}=
(\Gamma^{\mrm{A}\,-})^{-1}$.
\end{proof}
Let
\begin{equation}
\Gamma^{-}_{0}(f^{\#\,-}+h^-_{m+1}(c)):=c\,,
\quad
\Gamma^{-}_{1}(f^{\#\,-}+h^-_{m+1}(c)):=
\braket{\vp^-,f^{\#\,-}}
\label{eq:OBTAm2}
\end{equation}
for $f^{\#\,-}+h^-_{m+1}(c)\in\dom L^-_{\max}$.
The above proof also shows that:
\begin{cor}
$(\bbC^d,\Gamma^{-}_{0},\Gamma^{-}_{1})$ is
an OBT for $L^-_{\max}$.
\end{cor}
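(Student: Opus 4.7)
The plan is to mirror the preceding proof, specialised to the case in which all auxiliary $\bbC^{md}$-components ($\xi$, $\xi_x$, $\xi_y$, $\eta(c)$, $\cG_{\mrm{A}}$) are set to zero. In this restriction $A^-_{\max}$ becomes $L^-_{\max}$ and the triple $(\bbC^d,\Gamma^{\mrm{A}\,-}_0,\Gamma^{\mrm{A}\,-}_1)$ specialises to $(\bbC^d,\Gamma^{-}_{0},\Gamma^{-}_{1})$. The two steps of the previous proof --- verification of the Green identity, then unitarity of the boundary map via $(\Gamma^-)^{[+]}=(\Gamma^-)^{-1}$ --- will carry over using the same three ingredients: self-adjointness of $L^-_m$ on $\fH^-_{m+2}$ (Theorem~\ref{thm:prn}), the direct-sum decomposition $\dom L^-_{\max}=\fH^-_{m+2}\dsum h^-_{m+1}(\bbC^d)$, and Lemma~\ref{lem:spir}.

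First I would expand the boundary form $[f^-,L^-_{\max}g^-]_m-[L^-_{\max}f^-,g^-]_m$ for $f^-=f^{\#\,-}+h^-_{m+1}(c)$ and $g^-=g^{\#\,-}+h^-_{m+1}(c')$ in $\dom L^-_{\max}$. The $(f^{\#\,-},g^{\#\,-})$ cross-term vanishes by self-adjointness of $L^-_m$; the $(h^-_{m+1}(c),h^-_{m+1}(c'))$ terms cancel since $z_1\in\bbR$; the two remaining mixed terms collapse, via Lemma~\ref{lem:spir}, to $\braket{c,\braket{\vp^-,g^{\#\,-}}}_{\bbC^d}-\braket{\braket{\vp^-,f^{\#\,-}},c'}_{\bbC^d}$, which is exactly the Green identity for $(\Gamma^-_0,\Gamma^-_1)$.

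Second I would compute $(\Gamma^-)^{[+]}$ by imposing the Krein-adjoint defining identity against all test parameters $(f^{\#\,-},c)$. This splits into two equations, the ``$\xi$-equation'' of the preceding proof being absent. The $(\forall f^{\#\,-})$ equation, combined with self-adjointness of $L^-_m$ in $\fH^-_m$ and the direct-sum decomposition of $\dom L^-_{\max}$, forces $y^-=f^-+h^-_{m+1}(\chi)$ and $x^-=L^-_mf^-+z_1h^-_{m+1}(\chi)$ for some $f^-\in\fH^-_{m+2}$; the $(\forall c)$ equation, via Lemma~\ref{lem:spir}, gives $\chi'=\braket{\vp^-,f^-}$. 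This is precisely $((y^-,x^-),(\chi,\chi'))\in\Gamma^-$, so $(\Gamma^-)^{[+]}=(\Gamma^-)^{-1}$, and $\Gamma^-$ is a unitary linear relation. Since $\dom\Gamma^-=L^-_{\max}=L^{-\,*}_{\min}$ is closed and $\Gamma^-$ is single-valued, \cite[Corollary~2.4(iii)]{Derkach06} then gives $\ran\Gamma^-=\bbC^d\times\bbC^d$, and combined with the Green identity this yields the OBT property.

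The principal technical obstacle --- paralleling the $\Sigma^-\mcap\Sigma^+=\{0\}$ argument of the preceding lemma --- is single-valuedness of $\Gamma^-$: since $h^-_{m+1}(c)=0$ whenever $c\in\Sigma^-$, the $c$-coordinate of the parameterisation $f^-=f^{\#\,-}+h^-_{m+1}(c)$ is a priori ambiguous, so an $f^-\in\dom L^-_{\max}$ could nominally be sent by $\Gamma^-$ to several distinct images. As in the preceding proof, this ambiguity is confined to $\mul\Gamma^-$, which is shown to vanish by the same tracking argument based on the linear-independence of $\{\vp_\sigma\}$ propagated through the identification $\vp=\vp^-+\vp^+$.
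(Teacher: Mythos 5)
Your proposal follows the paper's own route: the paper obtains this corollary by reading off the $\fH^-_m$-components of the proof of the preceding lemma (the first two of the three equations into which the Krein-adjoint identity splits), exactly the ``set the $\bbC^{md}$-data to zero'' specialisation you describe, together with the observation $L^-_{\max}=A^-_{\max}\mcap(\fH^-_m\op\{0\})^2$. Your Green-identity computation and the computation of $(\Gamma^-)^{[+]}$ via self-adjointness of $L^-_m$ and Lemma~\ref{lem:spir} match the paper's argument.

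One caveat about your final paragraph: the ``same tracking argument'' does not actually transfer. For $\Gamma^{\mrm{A}\,-}$ the second $\bbC^{md}$-coordinate carries $\eta(c)$, which pins down $c$ even when $h^-_{m+1}(c)=0$; that is what makes $\mul\Gamma^{\mrm{A}\,-}$ reduce to $\Sigma^-\mcap\Sigma^+=\{0\}$. For $\Gamma^-$ there is no $\eta(c)$ left to track, and the ambiguity you correctly identify gives $\mul\Gamma^-=\Sigma^-\times\{0\}$, not $(\Sigma^-\mcap\Sigma^+)\times\{0\}$. Linear independence of $\{\vp_\sigma\}$ does not imply linear independence of the projected family $\{\vp^-_\sigma\}$, so $\Sigma^-=\{0\}$ is not a consequence of $\vp=\vp^-+\vp^+$; it is the standing (implicit) hypothesis that $L^-_{\min}$ has defect numbers $(d,d)$, as stated in Theorem~\ref{thm:mn2}. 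The paper elides this point as well, but you should invoke that hypothesis rather than the $\Sigma^-\mcap\Sigma^+$ argument.
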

We are now ready to state the main realization theorem
in the A-model for the symmetric operator $L^-_{\min}$,
by assuming \eqref{eq:hypho2} and $P^-_{n+1}\subseteq P^-_n$,
$n\in\bbZ$.
\begin{thm}\label{thm:mn2}
The extensions to $\cH^{\prime\,-}_{\mrm{A}}$
of a densely defined, closed, symmetric operator
$L^-_{\min}=L_{\min}\mcap(\fH^-_m)^2$
in $\fH^-_m$, which has defect numbers $(d,d)$
and which is essentially
self-adjoint in $\fH^-_0$, are described by
the proper extensions in $\cH^{\prime\,-}_{\mrm{A}}$
of a nondensely defined (in general), closed,
symmetric operator $A^-_{\min}=
A^\prime_{\min}\mcap(\fH^-_m\op\bbC^d)^2$.
A proper extension $A^-_\Theta$
is characterized by restricting the adjoint linear relation
$A^-_{\max}=A^{-\,*}_{\min}$ in $\cH^{\prime\,-}_{\mrm{A}}$
to the set of $\whf^-\in A^-_{\max}$ such that the pair
$(\Gamma^{\mrm{A}\,-}_0\whf^-,\Gamma^{\mrm{A}\,-}_1\whf^-)$
is an element of a linear relation $\Theta$ in $\bbC^d$;
an OBT $(\bbC^d,\Gamma^{\mrm{A}\,-}_0,\Gamma^{\mrm{A}\,-}_1)$
for $A^-_{\max}$ is as in \eqref{eq:OBTAm}.
The Krein--Naimark resolvent formula for
a (closed) proper extension $A^-_\Theta$ reads
\[
(A^-_\Theta-z)^{-1}=
(A^-_0-z)^{-1}+\gamma^-_{\mrm{A}}(z)
(\Theta-M^-_{\mrm{A}}(z))^{-1}\gamma^-_{\mrm{A}}(\ol{z})^*
\]
for $z\in\res A^-_0\mcap\res A^-_\Theta$.
A distinguished self-adjoint extension $A^-_0$ of $A^-_{\min}$
is a self-adjoint operator $A^-_0:=A^-_{\{0\}\times\bbC^d}$
whose resolvent is given by
\[
(A^-_0-z)^{-1}=(L^-_m-z)^{-1}\op
(\fM_d-z)^{-1}
\]
for $z\in\res A^-_0=\res L^-_m\setm\{z_1\}$.
The $\gamma$-field $\gamma^-_{\mrm{A}}$ and the Weyl function
$M^-_{\mrm{A}}$ corresponding to
$(\bbC^d,\Gamma^{\mrm{A}\,-}_0,\Gamma^{\mrm{A}\,-}_1)$
are described by
\[
\gamma^-_{\mrm{A}}(z)=
\left((L^-_m-z_1)(L^-_m-z)^{-1}h^-_{m+1}(\cdot),
-(\fM_d-z)^{-1}\eta(\cdot)\right)
\quad\text{on}\quad\bbC^d\,,
\]
\[
M^-_{\mrm{A}}(z)=q^-(z)+r(z)\quad\text{on}\quad\bbC^d
\]
for $z\in\res A^-_0$. The matrix valued function $q^-$
given by
\begin{align*}
q^-(z)=&([q^-(z)]_{\sigma\sigma^\prime})\in[\bbC^d]\,,
\quad z\in\res L^-_m\,,
\\
[q^-(z)]_{\sigma\sigma^\prime}:=&
(z-z_1)
\braket{\vp^-_\sigma,
(L^-_m-z)^{-1}h^-_{\sigma^\prime,m+1}}
\end{align*}
is the Weyl function which corresponds
to the OBT $(\bbC^d,\Gamma^{-}_{0},\Gamma^{-}_{1})$,
\eqref{eq:OBTAm2}, for the adjoint operator
$L^-_{\max}=L^{-\,*}_{\min}$ in $\fH^-_m$.
\end{thm}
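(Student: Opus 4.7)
My plan is to transport the proof of Theorem~\ref{thm:mn} across to the projected setting, using Theorem~\ref{thm:prn}, Lemma~\ref{lem:spir}, and the OBT $(\bbC^d,\Gamma^{\mrm{A}\,-}_0,\Gamma^{\mrm{A}\,-}_1)$ for $A^-_{\max}$ already constructed in the lemma preceding the theorem. The role of $L_{\min}$ is played by $L^-_{\min}$, and the duality $\braket{\vp,\cdot}$ is replaced throughout by $\braket{\vp^-,\cdot}$ via Lemma~\ref{lem:spir}.

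The basic properties of $L^-_{\min}$ reduce to those of $L_{\min}$ by projection: $L^-_{\min}$ is densely defined, closed and symmetric because it is the restriction of the self-adjoint operator $L^-_m$ (Theorem~\ref{thm:prn}) by the $d$ linearly independent continuous functionals $\braket{\vp^-_\sigma,\cdot}$ on $\fH^-_{m+2}$, and its essential self-adjointness in $\fH^-_0$ follows by the same Vishik-type argument that gives essential self-adjointness of $L_{\min}$ in $\fH_0$, now applied to $L^-_m$ and the $\vp^-_\sigma$. The proper-extension characterization and the Krein--Naimark resolvent formula are then immediate from the abstract theory recalled in Section~2 applied to the given OBT for $A^-_{\max}$. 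For the distinguished extension, $\Gamma^{\mrm{A}\,-}_0\whf^-=c=0$ forces $A^-_0$ to reduce to the orthogonal sum $L^-_m\op \fM_d$ on $\fH^-_{m+2}\op\bbC^{md}$; since $\sigma(\fM_d)=\{z_1\}$ (block upper-triangular with $z_1$ on the diagonal), one obtains $\res A^-_0=\res L^-_m\setm\{z_1\}$ and the resolvent splits as stated.

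To identify $\gamma^-_{\mrm{A}}$ and $M^-_{\mrm{A}}$, I would solve the eigenvalue equation for $A^-_{\max}$ at $z\in\res A^-_0$. Writing an eigenvector as $(f^{\#\,-}+h^-_{m+1}(c),\xi)$, the $\fH^-_m$-component yields $f^{\#\,-}=(z-z_1)(L^-_m-z)^{-1}h^-_{m+1}(c)$ (using $z\in\res L^-_m$) and the $\bbC^{md}$-component yields $\xi=-(\fM_d-z)^{-1}\eta(c)$, exactly in parallel with \eqref{eq:x}. This gives the stated formula for $\gamma^-_{\mrm{A}}$. Applying $\Gamma^{\mrm{A}\,-}_1$ then splits $M^-_{\mrm{A}}(z)c$ into the term $\braket{\vp^-,f^{\#\,-}}$, which equals $q^-(z)c$ by the definition of $q^-$ (and Lemma~\ref{lem:spir}), and the term $-[\cG_{\mrm{A}}\xi]_m$, which reproduces $r(z)c$ by the same matrix computation as in the proof of Theorem~\ref{thm:mn}. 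The final assertion that $q^-$ is the Weyl function of $(\bbC^d,\Gamma^{-}_0,\Gamma^{-}_1)$ for $L^-_{\max}$ then follows by repeating the same eigenvalue calculation for $L^-_{\max}$ alone and applying the definition \eqref{eq:OBTAm2}.

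The main obstacle is that $A^-_{\max}$ is a proper linear relation with nontrivial multivalued part $\{0\}\times\eta^+(\Sigma^-)$, so every step must be phrased in terms of pairs $\whf^-=(f^-,f^{\prime\,-})\in A^-_{\max}$ rather than operator evaluations, and care is needed to ensure that the two boundary maps $\Gamma^{\mrm{A}\,-}_0,\Gamma^{\mrm{A}\,-}_1$ remain well-defined on the entire relation. Lemma~\ref{lem:spir} is the decisive tool here: it guarantees that the $\fH_{-m-2}/\fH_{m+2}$ duality pairing with $\vp_\sigma$, restricted to $\fH^-_{m+2}$, is exactly the projected functional $\vp^-_\sigma$, which is what lets every piece of the unprojected calculation of Theorem~\ref{thm:mn} carry through intact.
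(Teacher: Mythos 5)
Your proposal is correct and follows essentially the same route as the paper: the paper's own proof likewise treats everything up to the $\gamma$-field and Weyl function as already established (by the preceding lemma giving the OBT for $A^-_{\max}$, the theorem $A^{-\,*}_{\min}=A^-_{\max}$, and the abstract Krein--Naimark formula), and then solves the eigenvalue equation for $A^-_{\max}$ to obtain exactly the formulas $f^{\#\,-}=(z-z_1)(L^-_m-z)^{-1}h^-_{m+1}(c)$, $\xi=-(\fM_d-z)^{-1}\eta(c)$, with the $L^-_{\max}$ statement derived from $L^-_{\max}=A^-_{\max}\mcap(\fH^-_m\op\{0\})^2$. Your elaborations (the role of Lemma~\ref{lem:spir}, the spectrum of $\fM_d$, and the care required because $A^-_{\max}$ has a nontrivial multivalued part) are consistent with the paper's argument.
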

\begin{proof}
In view of what has been achieved so far,
it remains to compute the $\gamma$-field and
the Weyl function. But these functions follow
straightforwardly from their definitions as long
as one notices that the eigenspace of $A^-_{\max}$
for the eigenvalue $z\in\res L^-_m\setm\{z_1\}$
consists of $(f^{\#\,-}+h^-_{m+1}(c),\xi)\in\dom A^-_{\max}$
such that
\[
f^{\#\,-}=(z-z_1)(L^-_m-z)^{-1}h^-_{m+1}(c)\,,\quad
\xi=-(\fM_d-z)^{-1}\eta(c)\,.
\]
Because $L^-_{\max}=A^-_{\max}\mcap(\fH^-_m\op\{0\})^2$,
the results for $L^-_{\max}$ are derived analogously.
\end{proof}
In particular, putting $P^-_n=I_{\fH_n}$ (hence $P^+_n=0$),
$n\in\bbZ$, the part of the theorem concerning the
Weyl function $q^-$ yields the following:
\begin{cor}\label{cor:mn2x}
The Krein $Q$-function $q$ is the Weyl function
associated with the OBT
$(\bbC^d,\Gamma_{0},\Gamma_{1})$,
\[
	\Gamma_{0}(f^\#+h_{m+1}(c)):=c\,,\quad
	\Gamma_{1}(f^\#+h_{m+1}(c)):=
	\braket{\vp,f^\#}
\]
$(f^\#\in\fH_{m+2},c\in\bbC^d)$,
for the adjoint $L^{*}_{\min}$ of $L_{\min}$ in $\fH_m$.
The domain
$\dom L^{*}_{\min}=\fH_{m+2}\dsum\fN_z(L^{*}_{\min})$,
where the eigenspace
$\fN_z(L^{*}_{\min})=(L-z)^{-1}h_m(\bbC^d)$,
$z\in\res L$.\qed
\end{cor}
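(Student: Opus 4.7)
My plan is to obtain Corollary~\ref{cor:mn2x} by specializing Theorem~\ref{thm:mn2} to the trivial projection choice $P^-_n:=I_{\fH_n}$ (so $P^+_n=0$) for every $n\in\bbZ$. This is an admissible specialization: the hypothesis $P^-_{n+1}\subseteq P^-_n$ of Theorem~\ref{thm:prn} becomes the inclusion $\fH_{n+1}\subseteq\fH_n$, which holds by the definition of the scale; and the hypothesis~\eqref{eq:hypho2} on $\cG_{\mrm{A}}$ is inherited unchanged.

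Under this specialization I would identify the objects of Theorem~\ref{thm:mn2} with their counterparts in the corollary. One has $\fH^-_n=\fH_n$, $L^-_m=L_m$, $\vp^-_\sigma=\vp_\sigma$, and $h^-_{\sigma,m+1}=h_{\sigma,m+1}$ by \eqref{eq:KApm}; hence $L^-_{\min}=L_{\min}$ as an operator in $\fH_m$, and its adjoint is $L^-_{\max}=L^*_{\min}$. The boundary maps $(\Gamma^-_0,\Gamma^-_1)$ of~\eqref{eq:OBTAm2} then reduce to the $(\Gamma_0,\Gamma_1)$ of the corollary, so the statement that $(\bbC^d,\Gamma_0,\Gamma_1)$ is an OBT for $L^*_{\min}$ follows directly from the Corollary preceding Theorem~\ref{thm:mn2}. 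Likewise, the matrix entries of $q^-$ reduce to $(z-z_1)\braket{\vp_\sigma,(L-z)^{-1}h_{\sigma^\prime,m+1}}$, which matches the Krein $Q$-function $q$ defined in Theorem~\ref{thm:mn}, so the Weyl function claim is immediate.

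The only point requiring a small computation is the eigenspace description. Theorem~\ref{thm:mn2} furnishes $\fN_z(L^-_{\max})=(L^-_m-z_1)(L^-_m-z)^{-1}h^-_{m+1}(\bbC^d)$, which after specialization reads $(L_m-z_1)(L_m-z)^{-1}h_{m+1}(\bbC^d)$. To match the stated form $(L-z)^{-1}h_m(\bbC^d)$, I would apply the resolvent identity $(L_m-z_1)(L_m-z)^{-1}=I+(z-z_1)(L_m-z)^{-1}$ to $h_{m+1}(c)\in\fH_m$ and use the relation $h_m(c)=(L-z_1)h_{m+1}(c)$, which follows directly from $h_{\sigma,j}=(L-z_1)^{-j}\vp_\sigma$; this yields
\[
(L-z)^{-1}h_m(c)=(L-z)^{-1}(L-z_1)h_{m+1}(c)=h_{m+1}(c)+(z-z_1)(L-z)^{-1}h_{m+1}(c),
\]
precisely the specialized eigenspace. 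The decomposition $\dom L^*_{\min}=\fH_{m+2}\dsum\fN_z(L^*_{\min})$ is then inherited from the domain of $L^-_{\max}$ recorded in Theorem~\ref{thm:mn2}. The only real obstacle is index bookkeeping, namely keeping track that $h_{m+1}(c)\in\fH_m$ while $h_m(c)\in\fH_{m-2}$, so that the resolvents act on elements in the correct rungs of the scale.
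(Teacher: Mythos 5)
Your proposal is correct and follows exactly the paper's own route: the paper obtains the corollary precisely by setting $P^-_n=I_{\fH_n}$ (hence $P^+_n=0$) in Theorem~\ref{thm:mn2} and reading off the statements about $q^-$, $L^-_{\max}$, and the boundary maps \eqref{eq:OBTAm2}. Your extra computation identifying $(L_m-z_1)(L_m-z)^{-1}h_{m+1}(\bbC^d)$ with $(L-z)^{-1}h_m(\bbC^d)$ via the resolvent identity is a correct filling-in of a step the paper leaves implicit.
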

An analogous theorem can be formulated for
$L^+_{\min}$ as well, where the corresponding
Weyl function $M^+_{\mrm{A}}=q^++r$ is the sum
of the Weyl function $q^+$ of $L^+_{\min}$ and the
generalized Nevanlinna function $r$.

Let
\[
\whh_\sigma:=b_{m+2}(L)^{-1/2}\vp_\sigma
\in\fH_0\setm\fH_1\,.
\]
Using this definition and the operator identity
\[
(L-z_1)(L-z)^{-1}=I_{\fH_0}+(z-z_1)(L-z)^{-1}
\]
the Weyl function
$q$ is rewritten in terms of the initial operator
$L$ and the reference $\fH_0$-scalar product according to
\[
[q(z)]_{\sigma\sigma^\prime}=
(z-z_1)\braket{\whh_\sigma,\whh_{\sigma^\prime}}_0+
(z-z_1)^2
\braket{\whh_\sigma,(L-z)^{-1}\whh_{\sigma^\prime}}_0\,,
\]
$z\in\res L$.
Using in addition \eqref{eq:PPP} and applying
\cite[Proposition~5.26]{Schmudgen12} and
Lemma~\ref{lem:resLm0}, the Weyl function $q^-$
admits the form
\[
[q^-(z)]_{\sigma\sigma^\prime}=
(z-z_1)\braket{\whh_\sigma,P^-_0(m)\whh_{\sigma^\prime}}_0+
(z-z_1)^2
\braket{\whh_\sigma,P^-_0(m)
(L-z)^{-1}\whh_{\sigma^\prime}}_0\,,
\]
$z\in\res L$, and similarly for $q^+$.
Thus the Weyl function $q=q^-+q^+$
of the symmetric operator $L_{\min}$
is the sum of the Weyl functions $q^\pm$
of the corresponding symmetric restrictions $L^\pm_{\min}$.
The latter property of additivity is clearly a consequence
of the initial hypothesis that the subspaces
$\fH^\pm_0$ reduce the operator $L$ (Theorem~\ref{thm:prn}).
%%%%%%%%%%%%%%%%%%%%%%%%%%%%%%%%%%%%%%%%%%%%%%%%%%%%%%%%%%%%%%
%%%%%%%%%%%%%%%%%%%%%%%%%%%%%%%%%%%%%%%%%%%%%%%%%%%%%%%%%%%%%%
%%%%%%%%%%%%%%%%%%%%%%%%%%%%%%%%%%%%%%%%%%%%%%%%%%%%%%%%%%%%%%
%%%%%%%%%%%%%%%%%%%%%%%%%%%%%%%%%%%%%%%%%%%%%%%%%%%%%%%%%%%%%%
\section*{Acknowledgement}
The author acknowledges the referees for their critique
and valuable remarks.
%%%%%%%%%%%%%%%%%%%%%%%%%%%%%%%%%%%%%%%%%%%%%%%%%%%%%%%%%%%%%%
%%%%%%%%%%%%%%%%%%%%%%%%%%%%%%%%%%%%%%%%%%%%%%%%%%%%%%%%%%%%%%
%%%%%%%%%%%%%%%%%%%%%%%%%%%%%%%%%%%%%%%%%%%%%%%%%%%%%%%%%%%%%%
%%%%%%%%%%%%%%%%%%%%%%%%%%%%%%%%%%%%%%%%%%%%%%%%%%%%%%%%%%%%%%
% \bibliographystyle{plain}
% \bibliography{lit-rev2}

\end{document}